\newcommand{\bbN}{{\mathbb N}}
\newcommand{\bbQ}{{\mathbb Q}}
\newcommand{\bbR}{{\mathbb R}}
\newcommand{\bbZ}{{\mathbb Z}}
\newcommand{\bbC}{{\mathbb C}}
\newcommand{\bbF}{\mathbb{F}}
\newcommand{\bfG}{{\mathbf G}}
\newcommand{\bR}{{\bf R}}
\newcommand{\bfH}{{\bf H}}
\newcommand{\id}{\operatorname{id}}
\newcommand{\supp}{\operatorname{supp}}
\newcommand{\Prob}{\operatorname{Prob}}
\newcommand{\End}{\operatorname{End}}
\newcommand{\Homeo}{\operatorname{Homeo}}
\newcommand{\Stab}{\operatorname{Stab}}
\newcommand{\SL}{\operatorname{SL}}
\newcommand{\Fix}{\operatorname{Fix}}
\newcommand{\Sym}{\operatorname{Sym}}
\newcommand{\PGL}{\operatorname{PGL}}
\newcommand{\GL}{\operatorname{GL}}
\newcommand{\Gr}{\operatorname{Gr}}
\newcommand{\M}{\operatorname{M}}
\newcommand{\LL}{\operatorname{L}}
\newcommand{\action}{\curvearrowright}
\newcommand{\lppar}{(\!(}
\newcommand{\rppar}{)\!)}
\DeclareMathOperator{\dd}{d\!}
\newtheorem{theorem}{Theorem}[section]
\newtheorem{lemma}[theorem]{Lemma}
\newtheorem{corollary}[theorem]{Corollary}
\newtheorem{cor}[theorem]{Corollary}
\newtheorem{proposition}[theorem]{Proposition}
\newtheorem{prop}[theorem]{Proposition}
\theoremstyle{definition}
\newtheorem{definition}[theorem]{Definition}
\newtheorem{defn}[theorem]{Definition}
\newtheorem{convention}[theorem]{Convention}
\newtheorem{example}[theorem]{Example}
\newtheorem{remark}[theorem]{Remark}
\newtheorem{setup}[theorem]{Setup}
\numberwithin{equation}{section}
\begin{document}
\title{Almost algebraic actions of algebraic groups and applications to algebraic representations}
\author{Uri Bader\thanks{uri.bader@gmail.com}}
\affil{Uri Bader, Weizmann Insitute of Science, Rehovot, Israel.}
\author{Bruno Duchesne\thanks{bruno.duchesne@univ-lorraine.fr}}
\affil{Universit\'e de Lorraine\\Institut \'Elie Cartan\\ B.P. 70239\\54506 Vandoeuvre-l\`es-Nancy Cedex\\ France.}
\author{Jean L\'ecureux\thanks{jean.lecureux@math.u-psud.fr}}
\affil{D\'epartement de Math\'ematiques\\ B\^atiment 425\\ Facult\'e des Sciences d'Orsay\\ Universit\'e Paris-Sud 11\\91405 Orsay\\ France.}
\maketitle

\begin{abstract}
Let $G$ be an algebraic group over a complete separable valued field $k$. 
We discuss the dynamics of the $G$-action on spaces of probability measures on algebraic $G$-varieties.
We show that the stabilizers of measures are almost algebraic and the orbits are separated by open invariant sets. 
We discuss various applications, including existence results for algebraic representations of amenable ergodic actions.
The latter provides an essential technical step in the recent generalization of Margulis-Zimmer super-rigidity phenomenon \cite{BF}.
\end{abstract}

%

\section{Introduction}

This work concerns mainly the dynamics of an algebraic group acting on the space of probability measures on an algebraic variety.
Most (but not all) of our results are known for local fields (most times, under a characteristic zero assumption). 
Our main contribution is giving an approach which is applicable also to a more general class of fields: complete valued fields.
On our source of motivation, which stems from ergodic theory, we will elaborate in  \S\ref{applications}, and in particular Theorem~\ref{BDL}.
First we describe our objects of consideration and our main results, put in some historical context.

\begin{setup} \label{setup}
For the entire paper $(k,|\cdot|)$ will be a valued field, which is assumed to be complete and separable as a metric space, and $\widehat{k}$ will be the completion of its algebraic closure, endowed with the extended absolute value.
\end{setup}

Note that $\widehat{k}$ is separable and complete as well (see the proof of Proposition~\ref{polishing}).
The most familiar examples of separable complete valued fields are of course $\bbR$ and $\bbC$, but one may also consider the $p$-adic fields $\bbQ_p$, as well as their finite extensions.
Considering $k=\bbC_p=\widehat{\bbQ}_p$ one may work over a field which is simultaneously complete, separable and algebraically closed. 
Another example of a complete valued field is given by fields of Laurent series $K\lppar t\rppar$, where $K$ is any field (this field is local if and only if $K$ is finite, and separable if and only if $K$ is countable),
or more generally the field of Hahn series $K\lppar t^{\Gamma}\rppar$, where $\Gamma$ is a subgroup of $\bR$ (see for example \cite{MR1225257}). This field is separable if and only if $K$ is countable and $\Gamma$ is discrete (see \cite{MOquestion}).




\begin{convention}Algebraic varieties over $k$ will be identified with their $\widehat{k}$-points and will be denoted by boldface letters. Their $k$-points will be denoted by corresponding Roman letters. In particular we use the following.
\end{convention}

\begin{setup}\label{setupG}
We fix  a $k$-algebraic group ${\bf G}$ and we denote $G={\bf G}(k)$.
\end{setup}

We are interested in algebraic dynamical systems, which we now briefly describe.
For a formal, pedantic description see \S\ref{alg perlim} and in particular Proposition~\ref{polishing}.
By an algebraic dynamical system we mean the action of $G$ on $V$, where
$V$ is the space of $k$-points of a $k$-algebraic variety ${\bf V}$ on which ${\bf G}$ acts $k$-morphically.
Such a dynamical system is Polish: $G$ is a Polish group, $V$ a Polish space and the action map $G\times V\to V$ is continuous (see \S\ref{alg perlim} for proper definitions).
The point stabilizers of such an action are algebraic subgroups, and by a result of Bernstein-Zelevinski~\cite{b-z}, the orbits of such an action are locally closed
(see Proposition \ref{polishing}).


Following previous works of Furstenberg and Moore, Zimmer found a surprising result: 
for the action of an algebraic group $G$ on an algebraic variety $V$, all defined over $\bbR$, 
consider now the action of $G$ on the space $\Prob(V)$ of probability measures on $V$. Then  the point stabilizers are again algebraic subgroups and the orbits are locally closed.
However, this result does not extend trivially to other fields.
For example, with $k=\bbC$, consider the Haar measure on the circle $S^1<\bbC^*$.
For the action of $\bbC^*$ on itself, the stabilizer of that measure is $S^1$, which is not a $\bbC$-algebraic subgroup.
Similarly, for $k=\bbQ_p$, consider the Haar measure on the $p$-adic integers $\bbZ_p<\bbQ_p$.
For the action of $\bbQ_p$ on itself, the stabilizer of that measure is $\bbZ_p$, which is not a $\bbQ_p$-algebraic subgroup.

\begin{definition} \label{aag-aaa}
A closed subgroup $L<G$ is called \emph{almost algebraic} if there exists a $k$-algebraic subgroup ${\bf H}<{\bf G}$
such that $L$ contains $H={\bf H}(k)$ as a normal cocompact subgroup.
A continuous action of $G$ on a Polish space $V$ is called \emph{almost algebraic} if the point stabilizers are almost algebraic subgroups of $G$ and the collection of $G$-invariant open sets separates the $G$-orbits, i.e the quotient topology on $G\backslash V$ is $T_0$.
\end{definition}

\begin{remark}
If $k$ is a local field then $G$ is locally compact
and by \cite[Theorem~2.6]{effros} the condition $G\backslash V$ is $T_0$
is equivalent to the (a priori stronger) condition that every $G$-orbit is locally closed in $V$.
\end{remark}

\begin{remark}
If $k=\bbR$ then every compact subgroup of $G$ is the real points of a real algebraic subgroup of ${\bf G}$ (see e.g. \cite[Chapter 4, Theorem 2.1]{MR1056486}).
It follows that every almost algebraic subgroup is the real points of a real algebraic subgroup of ${\bf G}$.
We get that a continuous action of $G$ on a Polish space $V$ is almost algebraic if and only if the stabilizers are real algebraic and the orbits are locally closed.
\end{remark}

Two obvious classes of examples of almost algebraic actions are algebraic actions (by the previously mentioned result of Bernstein-Zelevinski)
and proper actions (as the stabilizers are compact and the space of orbits is $T_2$, that is, Hausdorff).
The notion of almost algebraic action is a natural common generalization.
It is an easy corollary of Prokhorov's theorem (see Theorem~\ref{Prok} below) that if the action of $G$ on $V$ is proper then so is its action on $\Prob(V)$, see Lemma~\ref{Prokhorov}.
The main theorem of this paper is the following analogue.

\begin{theorem} \label{mainthm}
If the action of $G$ on a Polish space $V$ is almost algebraic then the action of $G$ on $\Prob(V)$ is almost algebraic as well.
\end{theorem}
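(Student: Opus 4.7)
The plan is to verify the two conditions defining an almost algebraic action for $G\curvearrowright\Prob(V)$: that every point stabilizer is an almost algebraic subgroup of $G$, and that the quotient topology on $G\backslash\Prob(V)$ is $T_0$.

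For the stabilizer condition, fix $\mu\in\Prob(V)$ and set $L=\Stab_G(\mu)$. The goal is to exhibit a $k$-algebraic subgroup $\mathbf{H}<\mathbf{G}$ with $H=\mathbf{H}(k)$ normal and cocompact in $L$. My first step would be a reduction from the Polish $V$ to an algebraic $\mathbf{G}$-variety: the hypothesis supplies, for each orbit $Gx\subset V$, a $G$-equivariant factor map $Gx=G/L_x\to G/H_x$ with compact fibers, where $\mathbf{H}_x<\mathbf{G}$ is $k$-algebraic. Assembling these equivariantly, one should obtain a $G$-map from $V$ to an algebraic target onto which $\mu$ pushes forward, allowing the problem to be transferred to the algebraic setting.

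In the algebraic case, I would then apply the ``algebraic hull'' technique. Let $\mathbf{W}$ be the smallest $k$-subvariety with $\mathbf{W}(k)\supset\supp(\mu)$; then $L$ preserves $\mathbf{W}$ and $\supp(\mu)$ is Zariski-dense in $\mathbf{W}$. Let $\mathbf{N}<\mathbf{G}$ be the algebraic stabilizer of $\mathbf{W}$ and $\mathbf{H}\triangleleft\mathbf{N}$ the kernel of the induced action on $\mathbf{W}$; both are $k$-algebraic subgroups of $\mathbf{G}$. Then $H=\mathbf{H}(k)\subset L$, since $H$ acts trivially on $\mathbf{W}$ and hence fixes every probability measure supported there. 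Cocompactness of $H$ in $L$ should follow from a Furstenberg-type compactness lemma: a subgroup of $\mathbf{N}(k)/\mathbf{H}(k)$ acting faithfully on $\mathbf{W}$ and preserving a Zariski-dense probability measure must be precompact. For the $T_0$ separation of orbits, given $\mu_1,\mu_2\in\Prob(V)$ in distinct $G$-orbits, I would first compare their pushforwards $\pi_*\mu_i$ along $\pi\colon V\to G\backslash V$; since $G$ acts trivially on $G\backslash V$ and the target is $T_0$ by hypothesis, if $\pi_*\mu_1\neq\pi_*\mu_2$ they can be separated by pulling back an invariant open set. In the case $\pi_*\mu_1=\pi_*\mu_2$, a disintegration reduces the problem to measures concentrated on a single orbit type $G/L_x$, where the action of $G$ is ``almost proper'' and Prokhorov's theorem in the form of Lemma~\ref{Prokhorov} should yield the required separation via tightness.

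The main obstacle I anticipate is the Furstenberg-type compactness lemma over a general complete valued field: classically this invokes local compactness of $G$ and weak-$*$ compactness of $\Prob$ of a projective variety, neither of which is directly available in the setting of complete but possibly non-locally-compact fields. A substitute must be developed, likely combining the ``polishing'' techniques of Proposition~\ref{polishing} with Prokhorov tightness. A secondary difficulty is the initial reduction to an algebraic target, since the almost algebraic structure of $V$ is given only orbit-by-orbit and must be assembled into a genuine $G$-equivariant map without a global selection theorem.
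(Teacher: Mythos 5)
Your overall skeleton (check stabilizers, check $T_0$, disintegrate over $G\backslash V$ to reduce to single orbits) matches the paper's, but both halves stop short at exactly the points where the real work lies, so as it stands the proposal has two genuine gaps.

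First, for the stabilizer condition, the ``Furstenberg-type compactness lemma'' you invoke --- that a subgroup preserving a Zariski-dense probability measure and acting faithfully must be precompact --- is not a black box you can cite; over a general complete valued field it is essentially equivalent to what you are trying to prove, and you yourself note that the classical proof (local compactness of $G$, weak-$*$ compactness of $\Prob$ of a projective variety) is unavailable. The paper's substitute is Proposition~\ref{prop:stab}: one runs a Noetherian descent on the collection of $k$-subgroups $\mathbf{H}$ with $\Prob(\mathbf{G}/\mathbf{H}(k))^L\neq\emptyset$, uses the doubling trick with the open set $\mathbf{U}=\{(x\mathbf{H}_0,y\mathbf{H}_0): y^{-1}x\notin\mathbf{N}\}$ to force the minimal $\mathbf{H}_0$ to be normal, and deduces compactness of the image of $L$ from the disintegration substitute for ergodic decomposition (Corollary~\ref{ec-replacement}) together with Weil's theorem that a Polish group carrying an invariant probability measure is compact. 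None of this is sketched in your proposal, and the ``assembly'' of the orbit-wise algebraic quotients into a global equivariant map that you worry about is in fact unnecessary: disintegration plus Lemma~\ref{stab} writes $\Stab_G(\mu)$ as an intersection of stabilizers of measures each supported on a single orbit, and intersections of almost algebraic subgroups are almost algebraic (Lemma~\ref{intalm}).

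Second, and more seriously, your treatment of the $T_0$ condition on a single orbit is wrong as stated: the action of $G$ on $G/L_x$ is not proper unless $L_x$ is compact, so Lemma~\ref{Prokhorov} and tightness do not apply, and there is no sense in which the action is ``almost proper'' that makes this work. Handling $\Prob(G/H)$ for a general algebraic $H$ is the technical heart of the paper (Proposition~\ref{aaaa}): it requires splitting a measure into its atomic and non-atomic parts relative to the projection $G/H\to G/N_{\mathbf G}(\mathbf H)(k)$ (Lemma~\ref{decom}), treating the atomic part by hand (Lemma~\ref{lem:atom}), and treating the non-atomic part by a Noetherian minimality induction on $\mathbf{H}$ using the product measure $\mu\times\mu$ concentrated on the locus $\mathbf{U}$ where stabilizers strictly decrease. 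Without an argument of this kind the $T_0$ claim for measures on a single non-compact orbit is unsupported.
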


The following corollary was obtained by Zimmer, under the assumptions that $k$ is a local field of characteristic 0
and ${\bf V}$ is homogeneous, see \cite[Chapter~3]{zimmer-book}.

\begin{cor}
Assume ${\bf G}$ has a $k$-action on a $k$-variety ${\bf V}$.
Then the induced action of $G={\bf G}(k)$ on $\Prob({\bf V}(k))$ is almost algebraic.
\end{cor}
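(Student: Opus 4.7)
The strategy is to reduce the corollary to Theorem~\ref{mainthm} by checking that the action of $G$ on $V={\bf V}(k)$ already satisfies the hypotheses of that theorem, and then quoting the theorem. No new dynamical content is needed; the work is purely to unpack definitions.

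To verify the hypotheses of Theorem~\ref{mainthm} applied to $V={\bf V}(k)$, two things must be checked: point stabilizers are almost algebraic, and $G\backslash V$ is $T_0$. The first is immediate from the discussion preceding Definition~\ref{aag-aaa}: the stabilizer of $v\in V$ is the group of $k$-points of the $k$-algebraic subgroup ${\rm Stab}_{\bf G}(v)<{\bf G}$, and any algebraic subgroup is almost algebraic (take ${\bf H}$ equal to the stabilizer itself in Definition~\ref{aag-aaa}). For the second, I would invoke the Bernstein--Zelevinski result, recorded in Proposition~\ref{polishing}, which ensures that every $G$-orbit in $V$ is locally closed. Local closedness alone implies $T_0$ separation: given two distinct orbits $Gv_1\neq Gv_2$, local closedness of $Gv_2$ means that $\overline{Gv_2}\setminus Gv_2$ is closed in $V$ and $G$-invariant. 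If one had $Gv_1\subseteq\overline{Gv_2}$, then by disjointness of the two orbits $Gv_1\subseteq\overline{Gv_2}\setminus Gv_2$, hence $\overline{Gv_1}\subseteq\overline{Gv_2}\setminus Gv_2$, which rules out $Gv_2\subseteq\overline{Gv_1}$. Thus at least one of the two orbits is not contained in the closure of the other, and the complement of that closure is a $G$-invariant open set separating them. Therefore $G\backslash V$ is $T_0$.

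With both properties verified, the action $G\curvearrowright V$ is almost algebraic in the sense of Definition~\ref{aag-aaa}, and Theorem~\ref{mainthm} directly yields that $G\curvearrowright \Prob(V)=\Prob({\bf V}(k))$ is almost algebraic as well. There is no genuine obstacle: the corollary is essentially a restatement of the main theorem in the concrete setting where the Polish $G$-space is the set of $k$-points of an algebraic $G$-variety, the almost algebraicity of which is the Bernstein--Zelevinski theorem (and the observation that algebraic implies almost algebraic).
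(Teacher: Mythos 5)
Your proof is correct and follows exactly the route the paper intends (the paper leaves the deduction implicit): Proposition~\ref{polishing} shows that the algebraic action of $G$ on ${\bf V}(k)$ is almost algebraic --- stabilizers are $k$-points of $k$-subgroups and orbits are locally closed, whence $G\backslash V$ is $T_0$ --- and Theorem~\ref{mainthm} then gives the conclusion for $\Prob({\bf V}(k))$. The only cosmetic imprecision is that in positive characteristic $\Stab_G(v)$ is the group of $k$-points of $\overline{\Stab_G(v)}^Z$ rather than of $\Stab_{\bf G}(v)$ itself, but Proposition~\ref{polishing} supplies exactly the fact you need, so nothing in your argument changes.
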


In the course of the proof of Theorem~\ref{mainthm} we obtain in fact a more precise information. A $k$-${\bf G}$-variety is a $k$-variety with a $k$-action of $\bf G$.

\begin{prop} \label{prop:stab}
Fix a closed subgroup $L<G$.
Then there exists a $k$-subgroup ${\bf H}_0<{\bf G}$ which is normalized by $L$
such that $L$ has a precompact image in the Polish group $(N_{\bf G}({\bf H}_0)/{\bf H}_0)(k)$
and such that for every $k$-${\bf G}$-variety ${\bf V}$, any $L$-invariant finite measure on ${\bf V}(k)$
is supported on the subvariety of ${\bf H}_0$-fixed points, ${\bf V}^{{\bf H}_0}\cap {\bf V}(k)$.
\end{prop}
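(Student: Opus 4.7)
The plan is to construct ${\bf H}_0$ by minimizing a natural family of $k$-algebraic subgroups attached to $L$-invariant measures, and then to read off both required properties.

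\emph{Setup of the family.} For each $k$-${\bf G}$-variety ${\bf V}$ and each $L$-invariant finite measure $\mu$ on ${\bf V}(k)$, I would let ${\bf W}_\mu\subseteq{\bf V}$ denote the Zariski closure of $\supp\mu$ (a $k$-subvariety, since $\supp\mu\subseteq{\bf V}(k)$) and set
\[
{\bf H}_\mu := \{g\in{\bf G}: g\cdot w=w\text{ for all } w\in{\bf W}_\mu\},
\]
a $k$-algebraic subgroup of ${\bf G}$. Since $L$ preserves $\supp\mu$, it preserves ${\bf W}_\mu$ setwise and therefore normalizes ${\bf H}_\mu$. For two pairs $({\bf V}_i,\mu_i)$, the product $\mu_1\otimes\mu_2$ on $({\bf V}_1\times{\bf V}_2)(k)$ is again $L$-invariant, its support has Zariski closure ${\bf W}_{\mu_1}\times{\bf W}_{\mu_2}$, and the diagonal action immediately gives ${\bf H}_{\mu_1\otimes\mu_2}={\bf H}_{\mu_1}\cap{\bf H}_{\mu_2}$. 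So the family $\{{\bf H}_\mu\}$ is stable under finite intersection, and Noetherianity of the lattice of closed $k$-subgroups of ${\bf G}$ yields a minimum ${\bf H}_0={\bf H}_{\mu_*}$, realized by some distinguished $({\bf V}_*,\mu_*)$.

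\emph{Support property.} For an arbitrary $L$-invariant $\mu$, minimality forces ${\bf H}_0\cap{\bf H}_\mu={\bf H}_{\mu_*\otimes\mu}={\bf H}_0$, hence ${\bf H}_0\subseteq{\bf H}_\mu$. This says precisely that ${\bf H}_0$ fixes every point of ${\bf W}_\mu\supseteq\supp\mu$, so $\supp\mu\subseteq{\bf V}^{{\bf H}_0}\cap{\bf V}(k)$ as required; normalization of ${\bf H}_0$ by $L$ is already built in (it holds for each ${\bf H}_\mu$, in particular for ${\bf H}_{\mu_*}={\bf H}_0$).

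\emph{Precompactness: the main obstacle.} Set ${\bf Q}=N_{\bf G}({\bf H}_0)/{\bf H}_0$ and let $\bar L\subseteq{\bf Q}(k)$ be the image of $L$. The subvariety ${\bf W}_{\mu_*}$ lies inside ${\bf V}_*^{{\bf H}_0}$, on which $N_{\bf G}({\bf H}_0)$ acts through ${\bf Q}$; the subgroup of ${\bf Q}(k)$ preserving ${\bf W}_{\mu_*}$ acts on it \emph{faithfully}, because an element of $N_{\bf G}({\bf H}_0)$ fixing ${\bf W}_{\mu_*}$ pointwise must lie in ${\bf H}_0$ by the very definition of ${\bf H}_0$. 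Since $\bar L$ preserves $\mu_*$, it is contained in the closed subgroup $\Stab_{{\bf Q}(k)}(\mu_*)$, and so precompactness of $\bar L$ reduces to compactness of this stabilizer. Establishing that compactness is the main technical hurdle: the natural route is to invoke a preceding lemma of the paper to the effect that, for a faithful algebraic action of a $k$-algebraic group on a variety, the stabilizer of any finite measure whose support is Zariski-dense is compact. Heuristically, a divergent sequence in the stabilizer, combined with Prokhorov tightness and Bernstein--Zelevinski's local closedness of algebraic orbits, would force $\mu_*$ to concentrate on a proper invariant subvariety of ${\bf W}_{\mu_*}$, contradicting Zariski-density of $\supp\mu_*$.
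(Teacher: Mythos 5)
Your construction of ${\bf H}_0$ and the derivation of the normalization and support properties are essentially sound (the minimization over pointwise fixers of Zariski closures of supports, closed under intersection via product measures, is a clean variant of the paper's minimization, which instead runs over $k$-subgroups ${\bf H}$ with $\Prob({\bf G}/{\bf H}(k))^L\neq\emptyset$; there is only a minor caveat in positive characteristic about whether the pointwise fixer of a $k$-subvariety is itself defined over $k$, which the paper sidesteps by always passing to Zariski closures of groups of $k$-points). The genuine gap is exactly where you flag "the main obstacle": the precompactness of the image of $L$ in $(N_{\bf G}({\bf H}_0)/{\bf H}_0)(k)$. The "preceding lemma of the paper" you invoke --- that for a faithful algebraic action the stabilizer of a finite measure with Zariski-dense support is compact --- does not exist in the paper; it is in fact a \emph{consequence} of the very proposition you are proving (apply the proposition to that stabilizer and use faithfulness to force ${\bf H}_0=\{e\}$), so invoking it here is circular. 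Your heuristic for why it should hold (a divergent sequence in the stabilizer, Prokhorov tightness, and local closedness of orbits forcing concentration on a proper subvariety) is the classical Furstenberg--Zimmer contraction argument, and it relies on local compactness of $k$ (Cartan decomposition, compactness of maximal bounded subgroups, weak-* limit points of translated measures). Over a general complete separable valued field such as $\bbC_p$ none of this is available, and avoiding precisely these compactness arguments is the point of the paper.

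The paper closes this step by a quite different mechanism, which your setup does not obviously reach: after replacing ${\bf G}$ by $\overline{L}^Z$, the minimal ${\bf H}_0$ is shown to be \emph{normal} in ${\bf G}$ (via the open set ${\bf U}$ of non-normalizing pairs and Corollary~\ref{ec-replacement}), so that $S=({\bf G}/{\bf H}_0)(k)$ is a Polish \emph{group} carrying an $L$-invariant finite measure $\mu_0$. Disintegration over $T\backslash S$ (Corollary~\ref{ec-replacement} again, with $T$ the closure of the image of $L$) then produces a $T$-invariant probability measure on a single coset $Ts$, hence, after right translation, on $T$ itself; Weil's theorem (a Polish group with an invariant measure class is locally compact, and a locally compact group with an invariant probability measure is compact) gives compactness of $T$. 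Your $\mu_*$ lives on a general variety ${\bf W}_{\mu_*}$ with no group structure, so to complete your argument you would still need to manufacture an invariant measure on a homogeneous space of the quotient group and run this disintegration-plus-Weil argument --- at which point you have essentially reconstructed the paper's proof. As written, the precompactness claim is unsupported.
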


This proposition is a generalization of one of the main results of Shalom~\cite{Shalom}, who proves it under the assumptions that $k$ is local and $L=G$.
For the case $L=G$ the following striking corollary is obtained.

\begin{cor}
If for every strict $k$-algebraic normal subgroup ${\bf H}\triangleleft {\bf G}$, ${\bf G}(k)/{\bf H}(k)$ is non-compact, then every $G$-invariant measure on any $k$-${\bf G}$-algebraic variety ${\bf V}(k)$ is supported on the ${\bf G}$-fixed points.
\end{cor}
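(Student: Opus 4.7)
The plan is to derive the corollary directly from Proposition~\ref{prop:stab} applied with $L = G$. Let $\mu$ be a $G$-invariant finite measure on $\mathbf{V}(k)$. The proposition provides a $k$-algebraic subgroup $\mathbf{H}_0 < \mathbf{G}$, normalized by $G$, such that $G$ has precompact image in $(N_{\mathbf{G}}(\mathbf{H}_0)/\mathbf{H}_0)(k)$ and $\mu$ is supported on $\mathbf{V}^{\mathbf{H}_0}\cap\mathbf{V}(k)$. To finish it suffices to prove that $\mathbf{H}_0 = \mathbf{G}$, for then the support of $\mu$ lies in $\mathbf{V}^{\mathbf{G}}\cap\mathbf{V}(k)$.

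I would establish $\mathbf{H}_0 = \mathbf{G}$ in two moves. First I would promote ``normalized by $G$'' to ``normal in $\mathbf{G}$'': the algebraic normalizer $\mathbf{N}:=N_{\mathbf{G}}(\mathbf{H}_0)$ is $k$-algebraic and $\mathbf{N}(k) \supseteq G = \mathbf{G}(k)$, hence $\mathbf{N}(k) = \mathbf{G}(k)$; using that $\mathbf{G}(k)$ is Zariski-dense in $\mathbf{G}$ in the situations where the hypothesis of the corollary is nonvacuous, one concludes $\mathbf{N} = \mathbf{G}$, i.e.\ $\mathbf{H}_0 \triangleleft \mathbf{G}$. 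Second, I would translate the precompact image of $G$ in $(\mathbf{G}/\mathbf{H}_0)(k)$ into compactness of $\mathbf{G}(k)/\mathbf{H}_0(k)$: the canonical continuous injection $\mathbf{G}(k)/\mathbf{H}_0(k) \hookrightarrow (\mathbf{G}/\mathbf{H}_0)(k)$ is a homeomorphism onto an open (hence closed) subgroup of the target, a standard consequence of the local structure of algebraic quotient maps over a complete valued field. Precompactness of the image in $(\mathbf{G}/\mathbf{H}_0)(k)$ therefore forces compactness of $\mathbf{G}(k)/\mathbf{H}_0(k)$ itself.

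With these two facts in place, $\mathbf{H}_0 \triangleleft \mathbf{G}$ is a $k$-algebraic normal subgroup with compact $k$-point quotient; the hypothesis of the corollary then forbids $\mathbf{H}_0$ from being strict, so $\mathbf{H}_0 = \mathbf{G}$ and the support of $\mu$ is contained in the $\mathbf{G}$-fixed points, as claimed.

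The main conceptual content of the argument has been absorbed into Proposition~\ref{prop:stab}, so the remainder is essentially formal. The one delicate, but routine, point is the upgrade from precompactness of the image of $G$ in $(\mathbf{G}/\mathbf{H}_0)(k)$ to compactness of the intrinsic quotient $\mathbf{G}(k)/\mathbf{H}_0(k)$; this is where one has to use that the canonical map between these two Polish groups is a homeomorphism onto an open subgroup.
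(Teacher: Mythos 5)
Your argument is correct and matches the paper's intended derivation: the paper states this corollary as an immediate consequence of Proposition~\ref{prop:stab} with $L=G$, and your two reductions (normality of ${\bf H}_0$ in ${\bf G}$ and compactness of ${\bf G}(k)/{\bf H}_0(k)$) are exactly the steps being left to the reader. One small remark: the closedness of the image of $G/H_0$ in $({\bf G}/{\bf H}_0)(k)$ follows directly from Proposition~\ref{polishing} --- the orbit is locally closed and is a subgroup, hence closed --- so you need not appeal to openness of the embedding, which is less clear over a general complete valued field.
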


In particular we can deduce easily the Borel density theorem.

\begin{corollary} 
Let ${\bf G}$ be a $k$-algebraic group and $\Gamma<G={\bf G}(k)$ be a closed subgroup such that $G/\Gamma$ has a $G$-invariant probability measure. If for every proper $k$-algebraic normal subgroup ${\bf H}\triangleleft {\bf G}$, ${\bf G}(k)/{\bf H}(k)$ is non-compact, then $\Gamma$ is Zariski dense in ${\bf G}$.
\end{corollary}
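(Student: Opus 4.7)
My plan is to use the previous corollary (about $G$-invariant measures on $k$-$\mathbf{G}$-varieties being supported on $\mathbf{G}$-fixed points) applied to the homogeneous variety $\mathbf{G}/\mathbf{L}$, where $\mathbf{L}$ is the Zariski closure of $\Gamma$.

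First I would set $\mathbf{L}$ to be the Zariski closure of $\Gamma$ in $\mathbf{G}$. Since $\Gamma$ is a group and the group operations on $\mathbf{G}$ are $k$-morphisms defined over $k$, $\mathbf{L}$ is a $k$-algebraic subgroup of $\mathbf{G}$ containing $\Gamma$. The goal is to show $\mathbf{L} = \mathbf{G}$. By Chevalley's theorem, the quotient $\mathbf{V} \defq \mathbf{G}/\mathbf{L}$ exists as a quasi-projective $k$-variety on which $\mathbf{G}$ acts $k$-morphically by left translation; thus $\mathbf{V}$ is a $k$-$\mathbf{G}$-variety in the sense of the excerpt.

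Because $\Gamma \subseteq \mathbf{L}(k)$, the rule $g\Gamma \mapsto g\mathbf{L}$ defines a continuous, $G$-equivariant map $\phi\colon G/\Gamma \to \mathbf{V}(k)$. Pushing the $G$-invariant probability measure on $G/\Gamma$ forward under $\phi$ yields a $G$-invariant probability measure $\mu$ on $\mathbf{V}(k)$. The previous corollary now applies: under our non-compactness hypothesis on proper $k$-algebraic normal subgroups, $\mu$ must be supported on the $\mathbf{G}$-fixed point set $\mathbf{V}^{\mathbf{G}} \cap \mathbf{V}(k)$. In particular, $\mathbf{V}^{\mathbf{G}} \cap \mathbf{V}(k) \neq \emptyset$, since $\mu$ is a probability measure.

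Finally I would compute these fixed points: a coset $g\mathbf{L}$ is fixed by every element of $\mathbf{G}$ if and only if $g^{-1}\mathbf{G} g \subseteq \mathbf{L}$, which, since $g^{-1}\mathbf{G} g = \mathbf{G}$ has the same dimension as $\mathbf{G}$ while $\mathbf{L} \subseteq \mathbf{G}$, forces $\mathbf{L} = \mathbf{G}$. Thus $\Gamma$ is Zariski dense. The only real technical point is invoking the existence of $\mathbf{G}/\mathbf{L}$ as a $k$-variety; if one wishes to avoid that, one can use a Chevalley-style representation $\mathbf{G} \to \GL(\mathbf{W})$ and a line $\ell \in \mathbf{P}(\mathbf{W})(k)$ with $\Stab_{\mathbf{G}}(\ell) = \mathbf{L}$, take $\mathbf{V}$ to be the Zariski closure of the orbit $\mathbf{G}\cdot\ell$ inside $\mathbf{P}(\mathbf{W})$, and argue analogously (with slightly more care about orbit closures); I expect this to be the only subtlety worth verifying.
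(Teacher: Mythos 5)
Your proposal is correct and follows essentially the same route as the paper: push the invariant probability measure forward along $G/\Gamma\to({\bf G}/\overline{\Gamma}^Z)(k)$, invoke the preceding corollary to place its support in the ${\bf G}$-fixed points, and conclude that the homogeneous space ${\bf G}/\overline{\Gamma}^Z$ is trivial. The extra details you supply (Chevalley's theorem for the quotient, the fixed-coset computation) are exactly what the paper leaves implicit.
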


To deduce the last corollary from the previous one, consider the map 
\[ G/\Gamma\to ({\bf G}/\overline{\Gamma}^Z)(k), \] 
where $\overline{\Gamma}^Z$ denotes the Zariski closure of $\Gamma$, and push forward the invariant measure from $G/\Gamma$ to obtain a $G$-invariant measure on $({\bf G}/\overline{\Gamma}^Z)(k)$. The homogeneous space ${\bf G}/\overline{\Gamma}^Z$ must contain a ${\bf G}$-fixed point, hence must be trivial. That is $\overline{\Gamma}^Z={\bf G}$.

\subsection{Applications: ergodic measures on algebraic varieties}

A classical theme in ergodic theory is the attempt to classify all ergodic measures classes,
given a continuous action of a topological group on a Polish space.
In this regard, the axiom that the space of orbits is $T_0$ has strong applications.
 Recall that, given a group $L$ acting by homeomorphisms on a Polish space $V$, a measure on $V$ is \emph{$L$-quasi-invariant} if its class is $L$-invariant. The following proposition is well known.

\begin{prop} \label{tame}
Let $V$ be a Polish $G$-space and assume that the quotient topology on $G\backslash V$ is $T_0$.
Let $L<G$ be a subgroup and $\mu$ an $L$-quasi-invariant ergodic probability (or $\sigma$-finite) measure.
Then there exists $v\in V$ such that $\mu(V-Gv)=0$.
\end{prop}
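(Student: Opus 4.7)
The plan is the classical Mackey--Effros argument: use the $T_0$ hypothesis to produce a countable family of $G$-invariant Borel sets separating orbits, then exploit the $L$-ergodic $0$/co-null dichotomy on these sets to cut $V$ down to a single orbit modulo $\mu$-null sets.

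Concretely, let $\{B_n\}_{n\in\bbN}$ be a countable basis for the Polish space $V$, and set $U_n=G\cdot B_n$. Since the orbit map $q\colon V\to G\backslash V$ is open (as is any quotient by a group action), the sets $q(B_n)$ form a countable basis for the quotient topology, and $U_n=q^{-1}(q(B_n))$. Because $G\backslash V$ is $T_0$, any two distinct orbits are separated by some element of this basis, so for $v,v'\in V$ one has $Gv\neq Gv'$ iff some $U_n$ contains exactly one of $v,v'$. Each $U_n$ is $G$-invariant and in particular $L$-invariant, so $L$-quasi-invariance together with ergodicity of $\mu$ forces $\mu(U_n)=0$ or $\mu(V\setminus U_n)=0$. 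Setting $I_0=\{n:\mu(U_n)=0\}$, $I_1=\{n:\mu(V\setminus U_n)=0\}$, define
\[
W \;=\; \bigcap_{n\in I_1}U_n\;\cap\;\bigcap_{n\in I_0}(V\setminus U_n).
\]
This $W$ is $G$-invariant Borel with $\mu(V\setminus W)=0$, hence $W\neq\emptyset$. By construction any two points of $W$ belong to exactly the same $U_n$'s, so by the separation property they lie in the same $G$-orbit; fixing any $v\in W$ gives $W\subseteq Gv$ and thus $\mu(V\setminus Gv)=0$, as desired.

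The only real substance is the countable separation step, which rests on second countability of $V$ and openness of the orbit map $q$; everything else is bookkeeping. The $\sigma$-finite case requires no modification, since ergodicity of an $L$-quasi-invariant $\sigma$-finite measure still amounts to the $0$/co-null dichotomy on $L$-invariant Borel sets, which is all that the argument uses.
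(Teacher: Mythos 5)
Your proof is correct and is essentially the paper's own argument: the paper works downstairs with a countable basis of $G\backslash V$ and the pushforward measure $\bar\mu$, intersecting the co-null basis elements with the complements of the null ones to get a singleton, while you work upstairs in $V$ with the saturations $U_n=G\cdot B_n$ of a basis of $V$ — these are exactly the preimages of the paper's basis, since the orbit map is open. The bookkeeping is identical, so there is nothing to add.
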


Indeed, $G\backslash V$ is second countable, as $V$ is, and for a countable basis $B_i$, denoting the push forward of $\mu$ to $G\backslash V$ by $\bar{\mu}$,
the set 
\[ \bigcap \{B_i ~|~\bar{\mu}(B_i)=1\}\cap \bigcap\{B_i^c ~|~\bar{\mu}(B_i)=0\} \]
is clearly a singleton, whose preimage in $V$ is an orbit of full measure.

In particular, we get that for a subgroup $L<G$ and an algebraic dynamical system of $G$,
every $L$-invariant measure is supported on a single $G$-orbit.
Another striking result is that an algebraic variety cannot support a weakly mixing probability measure.
Recall that an $L$-invariant probability measure $\mu$ is weakly mixing if and only if $\mu\times\mu$ is $L$-ergodic.

\begin{cor}
Assume ${\bf G}$ has a $k$-action on the $k$-variety ${\bf V}$.
Fix a closed subgroup $L<G$ and let $\mu$ be an $L$-invariant weakly mixing probability measure on $V={\bf V}(k)$.
Then there exists a point $x\in V^L$ such that $\mu=\delta_x$.
\end{cor}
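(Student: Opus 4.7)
The plan is to reduce, via Proposition~\ref{prop:stab}, to the case of a compact group acting on a Polish space, and then invoke the classical fact that a weakly mixing invariant probability measure for a compact group action must be a point mass fixed by the group.

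To carry this out, I first apply Proposition~\ref{prop:stab} to $L$: this produces a $k$-subgroup ${\bf H}_0 < {\bf G}$ normalized by $L$ such that $L$ has precompact image in the Polish group $N = (N_{\bf G}({\bf H}_0)/{\bf H}_0)(k)$ and such that $\mu$ is supported on $W = {\bf V}^{{\bf H}_0}(k)$. Let $K$ be the closure in $N$ of the image of $L$; then $K$ is a compact group acting continuously on the Polish space $W$, and the $L$-action on $W$ factors through $K$. Using that the image of $L$ is dense in $K$ and the action is continuous, I conclude that Borel measures on $W$ are $L$-invariant (resp.\ $L$-ergodic) if and only if they are $K$-invariant (resp.\ $K$-ergodic); in particular, $\mu$ is $K$-invariant and $\mu\times\mu$ is $K$-ergodic on $W \times W$.

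For the second step, since $K$ is compact all its orbits in $W$ are closed and the quotient is Hausdorff, so Proposition~\ref{tame} applies: $\mu$ is supported on a single $K$-orbit $K/H$ (with $H = K_{x_0}$) and is the unique $K$-invariant probability thereon; likewise $\mu\times\mu$ is supported on a single $K$-orbit of $(K/H) \times (K/H)$. But the $K$-orbits in the product are parametrized by the double-coset space $H \backslash K / H$, and the pushforward of $\mu\times\mu$ to this quotient (computed via Haar measure) is a nontrivial probability measure unless $H = K$. Hence $K/H$ reduces to a point, $\mu = \delta_{x_0}$, and $x_0$ is $K$-fixed, hence $L$-fixed.

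The subtle step I expect is the transfer of $L$-ergodicity to $K$-ergodicity, which relies on the density of the image of $L$ in $K$ together with continuity of the $K$-action on the Polish space $W$. Once this reduction is in hand, the remainder is a short classical argument about compact group actions on product spaces.
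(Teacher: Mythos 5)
Your proposal is correct and follows the paper's own route: the paper likewise deduces the corollary directly from Proposition~\ref{prop:stab}, observing that the $L$-action on ${\bf V}^{{\bf H}_0}\cap{\bf V}(k)$ factors through a compact group, and then invokes the classical fact that a weakly mixing invariant measure for a compact group action is a Dirac mass at a fixed point. The only difference is that you spell out this last classical step (via Proposition~\ref{tame} and the double-coset argument), which the paper leaves implicit.
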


This corollary follows at once from Proposition~\ref{prop:stab}, as the action of $L$ on ${\bf V}^{{\bf H}_0}\cap {\bf V}(k)$ is via a compact group.


We end this subsection with the following useful application, obtained by composing Proposition~\ref{tame} with Theorem~\ref{mainthm}.
This corollary is in fact our main motivation for developing the material in this paper.
It deals with measure on spaces of measures, and is
the main tool in deriving Theorem~\ref{BDL} below.

\begin{cor}
Assume ${\bf G}$ has a $k$-action on the $k$-variety ${\bf V}$. Denote $V={\bf V}(k)$.
Let $L<G$ be a subgroup and $\nu$ be an $L$-ergodic quasi-invariant measure on $\Prob(V)$.
Then there exists $\mu\in \Prob(V)$ such that $\nu(\Prob(V)-G\mu)=0$.
\end{cor}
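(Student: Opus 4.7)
The plan is essentially the one the authors advertise: combine Theorem~\ref{mainthm} with Proposition~\ref{tame} applied to $\Prob(V)$.

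First I would note that the $G$-action on $V={\bf V}(k)$ is an algebraic dynamical system, hence almost algebraic: by the Bernstein--Zelevinski result mentioned in the introduction, the orbits are locally closed (so the quotient topology is $T_0$) and the stabilizers are $k$-algebraic subgroups, in particular almost algebraic. Therefore Theorem~\ref{mainthm} applies to $V$ and tells us that the $G$-action on the Polish space $\Prob(V)$ is almost algebraic as well. In particular, the quotient topology on $G\backslash \Prob(V)$ is $T_0$.

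Now I would invoke Proposition~\ref{tame} directly with the Polish $G$-space $\Prob(V)$, the subgroup $L<G$, and the $L$-ergodic quasi-invariant measure $\nu$. The proposition yields an element $\mu\in \Prob(V)$ such that $\nu\bigl(\Prob(V)\setminus G\mu\bigr)=0$, which is exactly the conclusion.

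There is no real obstacle here since the work has already been done: the only substantive ingredient is Theorem~\ref{mainthm}, whose proof constitutes the body of the paper; everything else is formal (the Polishness of $\Prob(V)$ on the Polish space $V$ is standard, and Proposition~\ref{tame} is noted to be well-known, with its short proof given in the excerpt). Thus the corollary reduces to checking that the hypotheses of Theorem~\ref{mainthm} are met for the given algebraic action, which is immediate.
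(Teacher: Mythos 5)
Your proof is correct and is exactly the paper's argument: the authors state that this corollary is ``obtained by composing Proposition~\ref{tame} with Theorem~\ref{mainthm},'' which is precisely your two-step reduction (almost algebraicity of $G\acts\Prob(V)$ gives the $T_0$ quotient, then Proposition~\ref{tame} applied to the Polish $G$-space $\Prob(V)$ finishes). No gaps.
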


\subsection{Applications to algebraic representations of ergodic actions.} \label{applications}

A main motivation for us to extend the foundation outside the traditional local field zone is the recent developments in the theory of algebraic representations of ergodic actions, and in particular its applications to rigidity theory.
In \cite{BF} the following theorem, as well as various generalizations, are proven.

\begin{theorem}[{\cite[Theorem 1.1]{BF}}, Margulis super-rigidity for arbitrary fields] \label{marguliscor}
Let $l$ be a local field.
Let $T$ to be the $l$-points of a connected almost-simple algebraic group defined over $l$.
Assume that the $l$-rank of $T$ is at least two.
Let $\Gamma<T$ be a lattice.

Let $k$ be a valued field.
Assume that as a metric space $k$ is complete.
Let $G$ be the $k$-points of an adjoint simple algebraic group defined over $k$.
Let $\delta:\Gamma \to G$ be a homomorphism.
Assume $\delta(\Gamma)$ is Zariski dense in $G$ and unbounded.
Then there exists a continuous homomorphism $d:T\to G$
such that $\delta=d|_{\Gamma}$.
\end{theorem}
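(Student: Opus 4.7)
The strategy is Zimmer's boundary/cocycle reduction scheme, with the almost algebraicity machinery developed in this paper replacing the local-field inputs used classically. Since $T$ has $l$-rank at least $2$, fix a minimal $l$-parabolic $P<T$ and set $B=T/P$ with its quasi-invariant measure class $\nu$; then $\Gamma\acts B$ is amenable and $\Gamma\acts B\times B$ is ergodic (double ergodicity of the Poisson boundary of a higher-rank lattice). Fix a minimal $k$-parabolic $\mathbf{Q}<\mathbf{G}$, so that $(\mathbf{G}/\mathbf{Q})(k)$ is compact. The standard fixed-point argument for amenable actions produces a $\Gamma$-equivariant measurable map
\[
\phi\colon B\longrightarrow\Prob\bigl((\mathbf{G}/\mathbf{Q})(k)\bigr),
\]
with $\Gamma$ acting on the target through $\delta$.

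The crux is to replace $\phi$ by an honest measurable map into algebraic $k$-points. By Theorem~\ref{mainthm}, the $G$-action on $\Prob((\mathbf{G}/\mathbf{Q})(k))$ is almost algebraic, so its orbit space is $T_0$; feeding this into Proposition~\ref{tame} applied to $L=\delta(\Gamma)$ shows that the ergodic $\delta(\Gamma)$-quasi-invariant measure $\phi_{*}\nu$ is concentrated on a single $G$-orbit $G\mu_0\subset\Prob((\mathbf{G}/\mathbf{Q})(k))$. Setting $L_0=\Stab_G(\mu_0)$, Proposition~\ref{prop:stab} supplies a $k$-algebraic subgroup $\mathbf{H}_0<\mathbf{G}$, normalized by $L_0$, with $L_0$ having precompact image in $(N_{\mathbf{G}}(\mathbf{H}_0)/\mathbf{H}_0)(k)$ and with $\mu_0$ supported on $(\mathbf{G}/\mathbf{Q})^{\mathbf{H}_0}\cap(\mathbf{G}/\mathbf{Q})(k)$. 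Since $L_0\subset N_{\mathbf{G}}(\mathbf{H}_0)(k)$, composing $\phi$ with the $G$-equivariant map $G/L_0\to(\mathbf{G}/\mathbf{H})(k)$ where $\mathbf{H}=N_{\mathbf{G}}(\mathbf{H}_0)$ produces a genuine $\Gamma$-equivariant measurable map $\bar\phi\colon B\to(\mathbf{G}/\mathbf{H})(k)$ into an honest algebraic $k$-variety; the Zariski density and unboundedness of $\delta(\Gamma)$, combined with the simplicity of $\mathbf{G}$, exclude the trivial case $\mathbf{H}=\mathbf{G}$.

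Finally, double ergodicity of $\Gamma\acts B\times B$, combined with the algebraic geometry of $G$-orbits on $\mathbf{G}/\mathbf{H}\times\mathbf{G}/\mathbf{H}$ and the Zariski density of $\delta(\Gamma)$, forces $\mathbf{H}$ to be contained in a proper $k$-parabolic, so that $\bar\phi$ becomes a $\delta$-equivariant measurable boundary map into a $k$-flag variety. The classical Margulis straightening/extension procedure then promotes $\bar\phi$ to the required continuous homomorphism $d\colon T\to G$ restricting to $\delta$ on $\Gamma$. The main obstacle is the second step: over a field $k$ that is merely complete valued rather than locally compact, the classical orbit-closure and effective-Borel-density arguments are unavailable, and it is precisely Theorem~\ref{mainthm} (making $G\backslash\Prob((\mathbf{G}/\mathbf{Q})(k))$ a $T_0$ space, so Proposition~\ref{tame} applies) together with Proposition~\ref{prop:stab} (extracting an actual $k$-algebraic subgroup out of a measure stabilizer) that allows the reduction from a measurable map into probability measures to a measurable map into algebraic $k$-points to go through in this generality.
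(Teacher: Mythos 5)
First, a point of logistics: the paper does not prove Theorem~\ref{marguliscor} at all. It is quoted verbatim from \cite{BF}, and the present paper's only contribution to it is Theorem~\ref{BDL}, the existence result for algebraic representations of amenable ergodic actions, which is the technical input that \cite{BF} feeds into a separate (and substantial) straightening argument. So a self-contained proof at the level of detail you give would in any case have to be measured against \cite{BF}, not against this paper.

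More importantly, your argument has a genuine gap at its very first step, and it is exactly the gap this paper exists to fill. You assert that $(\mathbf{G}/\mathbf{Q})(k)$ is compact for a minimal $k$-parabolic $\mathbf{Q}$. That is true when $k$ is a local field, but it is false for a general complete valued field: $k$ need not be locally compact (think of $k=\bbC_p$ or $\bbQ\lppar t\rppar$), and already $\bbP^1(k)$ fails to be compact in that case. Consequently $\Prob((\mathbf{G}/\mathbf{Q})(k))$ is not weak*-compact, and the amenability of $\Gamma\acts B$ does not produce the boundary map $\phi$ you start from --- the fixed-point characterization of amenable actions only furnishes equivariant families of measures on \emph{compact} metrizable $G$-spaces. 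The paper's substitute is the compact metrizable space $S(E)$ of homothety classes of seminorms on a faithful irreducible representation $E$ (see \S\ref{norms}): amenability gives a map into $\Prob(S(E))$, and the stratification by $\dim\ker$ (Proposition~\ref{ker}) then bifurcates into the degenerate case (measures on Grassmannians, handled by Theorem~\ref{mainthm}, Proposition~\ref{tame} and Proposition~\ref{prop:stab}, much as you describe) and the nondegenerate case of measures on the space of norms $I(E)$, where $G$ acts isometrically with bounded stabilizers. This second branch is the source of the second alternative in Theorem~\ref{BDL}; in your scheme it never appears because of the erroneous compactness claim, and it is precisely where the hypothesis that $\delta(\Gamma)$ is unbounded must eventually be used. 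The downstream steps you sketch (reduction to a single orbit, extraction of $\mathbf{H}_0$, the final straightening) are broadly in the spirit of \cite{BF}, but without a correct source for the initial equivariant measure-valued map the argument does not start.
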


The proofs in \cite{BF} are based on the following, slightly technical, theorem which will be proven here.

\begin{theorem} \label{BDL}
Let $R$ be a locally compact group and $Y$ be an ergodic, amenable Lebesgue $R$-space.
Let $(k,|\cdot|)$ be a valued field.
Assume that as a metric space $k$ is complete and separable.
Let ${\bf G}$ be a simple $k$-algebraic group.
Let $f:R\times Y \to {\bf G}(k)$ be a measurable cocycle.

Then either
there exists
a $k$-algebraic subgroup ${\bf H}\lneq {\bf G}$
and an $f$-equivariant measurable map $\phi:Y\to {\bf G}/{\bf H}(k)$,
or there exists
a complete and separable metric space $V$ on which $G$ acts by isometries
with bounded stabilizers
and an $f$-equivariant measurable map $\phi':Y\to V$.
%
\end{theorem}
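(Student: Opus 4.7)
The plan is to run a dichotomy argument on the cocycle $f$, driven by the stabilizer analysis in Proposition~\ref{prop:stab} together with the almost algebraicity of $G$-actions on spaces of measures (Theorem~\ref{mainthm}), and concluded by the simplicity of ${\bf G}$.

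First, I would use the amenability of the ergodic $R$-space $Y$ to produce an $f$-equivariant measurable map $\phi\colon Y\to\Prob({\bf G}/{\bf H}(k))$ for some proper $k$-algebraic subgroup ${\bf H}\lneq{\bf G}$. When $k$ is local, this is immediate from amenability applied to the compact convex $G$-space $\Prob({\bf G}/{\bf P}(k))$ with ${\bf P}$ a minimal $k$-parabolic; for a general complete valued field, where these flag-type spaces are typically no longer compact, one must produce an equivariant section into a suitable $G$-invariant weak-$*$-compact convex subset of $\Prob({\bf G}/{\bf H}(k))$, using almost algebraicity to control tightness. This is the principal technical obstacle of the argument.

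Given such $\phi$, set $L(y)=\Stab_G(\phi(y))$. Proposition~\ref{prop:stab} produces, for a.e.\ $y$, a $k$-algebraic subgroup ${\bf H}_0(y)\leq{\bf G}$ normalized by $L(y)$, with $L(y)$ precompact in $(N_{\bf G}({\bf H}_0(y))/{\bf H}_0(y))(k)$, and such that $\phi(y)$ is supported on the ${\bf H}_0(y)$-fixed points in ${\bf G}/{\bf H}(k)$. The cocycle relation $\phi(r\cdot y)=f(r,y)\phi(y)$ forces ${\bf H}_0(r\cdot y)=f(r,y){\bf H}_0(y)f(r,y)^{-1}$, so $y\mapsto{\bf H}_0(y)$ is $f$-equivariant into the Polish space of $k$-subgroups of ${\bf G}$. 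Using $R$-ergodicity of $Y$ and measurable selection, one reduces to the case where ${\bf H}_0(y)$ is conjugate to a single fixed $k$-subgroup ${\bf H}_0\leq{\bf G}$ for a.e.\ $y$. This yields an $f$-equivariant measurable map $\psi\colon Y\to({\bf G}/N_{\bf G}({\bf H}_0))(k)$.

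Finally, we dichotomize on ${\bf H}_0$ using the simplicity of ${\bf G}$. If ${\bf H}_0$ is nontrivial and non-central, then $N_{\bf G}({\bf H}_0)$ is a proper $k$-algebraic subgroup of ${\bf G}$, and $\psi$ realizes case~(a) with ${\bf H}=N_{\bf G}({\bf H}_0)$. If ${\bf H}_0$ is trivial (or reduces to a finite central subgroup), then $L(y)$ is bounded in $G$ for a.e.\ $y$; endowing $\Prob({\bf G}/{\bf H}(k))$ with a complete $G$-invariant metric compatible with its Polish topology then realizes $\phi$ as an $f$-equivariant measurable map into a complete separable metric $G$-space with isometric action and bounded stabilizers, yielding case~(b). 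The two main difficulties are Step~1 (amenability in the absence of local compactness) and the construction of a genuine complete $G$-invariant metric on the target space in the final step.
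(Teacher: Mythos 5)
Your skeleton (amenability $\to$ equivariant measure-valued map $\to$ stabilizer analysis via Proposition~\ref{prop:stab} $\to$ dichotomy via simplicity) matches the paper's, but the two steps you flag as ``the principal technical obstacle'' and ``the main difficulties'' are genuine gaps, not deferrable technicalities, and both are resolved in the paper by a single construction you are missing: the space of homothety classes of seminorms. In Step~1 you cannot start from $\Prob({\bf G}/{\bf H}(k))$: over a general complete valued field the flag-type spaces are not compact, $\Prob$ of them is not weak-$*$ compact, and there is no reason a $G$-invariant compact convex subset exists --- indeed if one did, much of the difficulty of the theorem would evaporate. The paper instead fixes an irreducible closed immersion ${\bf G}\to\PGL(E)$ (after passing to the adjoint group via Lemma~\ref{finite morphisms}) and applies amenability to the \emph{compact} metrizable $G$-space $S(E)$ of homothety classes of seminorms on $E=k^n$ (\S\ref{norms}), obtaining $\phi:Y\to\Prob(S(E))$. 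The algebraic alternative is then reached not by applying Proposition~\ref{prop:stab} fiberwise (which would also require justifying measurability and equivariance of $y\mapsto{\bf H}_0(y)$, a non-canonical choice), but by pushing forward along $s\mapsto\ker(s)$ to $\Prob(\Gr_d(E))$ when the generic kernel dimension $d$ is positive, locating a single full-measure $G$-orbit via Theorem~\ref{mainthm} and the $T_0$ argument of Proposition~\ref{tame}, and only then applying Proposition~\ref{prop:stab} once to the stabilizer of that orbit; irreducibility of the representation guarantees ${\bf H}_0\lneq{\bf G}$ and simplicity finishes that branch.

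The second gap is the target space in the bounded-stabilizer alternative. You propose to endow $\Prob({\bf G}/{\bf H}(k))$ with a complete $G$-invariant metric, but no such metric exists in general: the $G$-action on ${\bf G}/{\bf H}(k)$ (e.g.\ on projective space) is not isometric for any compatible metric, and boundedness of the pointwise stabilizers does not produce one. The paper's resolution is again the seminorm space: when $d=0$ (or after reducing to it when ${\bf H}_0=\{e\}$, using Theorem~\ref{orthogonal} to fix a norm), the target is $V=\Prob(I(E))$, where $I(E)$ is the space of homothety classes of \emph{norms} equipped with the Goldman--Iwahori-type metric of Lemma~\ref{boundedaction}; there $\PGL(E)$ genuinely acts by isometries with bounded stabilizers of bounded sets, and Lemma~\ref{contprob} transfers this to the Prokhorov metric on $\Prob(I(E))$. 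Without introducing $S(E)$ and $I(E)$, neither endpoint of your argument can be made to work.
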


A more friendly, cocycle free, version is the following.

\begin{cor}\label{thm:rep}
Let $R$ be a locally compact, second countable group. Let $Y$ be an ergodic, amenable $R$-space. 
Suppose that $\bfG$ is an adjoint  simple $k$-algebraic group, and there is a morphism $R\to G=\bfG(k)$.
Then :
\begin{itemize} 
\item Either there exists a complete and separable metric space $V$, on which $G$ acts by isometries with bounded stabilizers,  and an $R$-equivariant measurable map $Y\to V$; 
\item or there exists a strict $k$-algebraic subgroup $\bfH$ and an $R$-equivariant measurable map $Y\to\bfG/\bfH(k)$.
\end{itemize}
\end{cor}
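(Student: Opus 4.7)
The plan is to deduce the corollary as a straightforward specialization of Theorem~\ref{BDL}, using the observation that a continuous homomorphism is the same data as a measurable cocycle which is constant in the $Y$-variable.

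To set up the reduction, let $\rho:R\to G=\mathbf{G}(k)$ denote the given morphism and define $f:R\times Y\to\mathbf{G}(k)$ by $f(r,y):=\rho(r)$. The cocycle identity $f(r_1r_2,y)=f(r_1,r_2y)f(r_2,y)$ collapses to $\rho(r_1r_2)=\rho(r_1)\rho(r_2)$, so $f$ is indeed a measurable cocycle. All hypotheses of Theorem~\ref{BDL} are in place: $R$ is locally compact and second countable, $Y$ is an ergodic amenable $R$-space, $k$ is complete and separable, and $\mathbf{G}$ is a simple $k$-algebraic group (being adjoint simple by assumption).

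Applying Theorem~\ref{BDL} to $f$ yields one of two possibilities: either there is a strict $k$-algebraic subgroup $\mathbf{H}\lneq\mathbf{G}$ together with an $f$-equivariant measurable map $\phi:Y\to\mathbf{G}/\mathbf{H}(k)$, or else there is a complete separable metric space $V$ on which $G$ acts by isometries with bounded stabilizers, together with an $f$-equivariant measurable map $\phi':Y\to V$.

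The final step is to reinterpret $f$-equivariance in the present setting. For a cocycle of the form $f(r,y)=\rho(r)$, the relation $\phi(ry)=f(r,y)\phi(y)$ reads $\phi(ry)=\rho(r)\phi(y)$, which is exactly $R$-equivariance for the $R$-action on the target obtained by restricting the $G$-action along $\rho$. Thus the two alternatives produced by Theorem~\ref{BDL} translate verbatim into the two alternatives of the corollary, and no further argument is required. There is essentially no obstacle here; the content of the corollary is fully absorbed into Theorem~\ref{BDL}, the role of the cocycle formulation in the latter being precisely to provide a setting broad enough to encompass (and indeed be motivated by) specializations such as this one.
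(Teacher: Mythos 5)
Your proposal is correct and is exactly the intended derivation: the paper presents Corollary~\ref{thm:rep} as the ``cocycle free'' specialization of Theorem~\ref{BDL}, obtained by taking the constant cocycle $f(r,y)=\rho(r)$ and noting that $f$-equivariance then reduces to $R$-equivariance. No issues.
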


Taking $Y$ to be a point in the above corollary, we obtain the following.

\begin{cor}
Suppose $R<\GL_n(k)$ is a closed amenable subgroup.
Then the image of $R$ in $\overline{R}^Z$ modulo its solvable radical is bounded.
\end{cor}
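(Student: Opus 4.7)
The plan is to apply Corollary~\ref{thm:rep} with $Y$ equal to a point, after first reducing to the adjoint simple factors. Let $\mathbf{L}=\overline{R}^Z$ be the Zariski closure of $R$ and $\mathbf{S}$ its solvable radical, so $\mathbf{L}/\mathbf{S}$ is semisimple. Its adjoint quotient decomposes as a finite product $\prod_{i=1}^m \bfG_i$ of simple adjoint $k$-groups. The map $(\mathbf{L}/\mathbf{S})(k)\to \prod_{i=1}^m \bfG_i(k)$ has finite kernel (the $k$-points of the finite center), hence is proper, so a subset of $(\mathbf{L}/\mathbf{S})(k)$ is bounded if and only if its image in each $\bfG_i(k)$ is bounded. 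Consequently it suffices to show that, for each $i$, the image $\rho_i(R)\subset \bfG_i(k)$ is bounded.

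Fix $i$. The homomorphism $\rho_i\colon R\to \bfG_i(k)$ has Zariski dense image, since $R$ is by definition Zariski dense in $\mathbf{L}$ and the projection onto any simple factor of the adjoint quotient is surjective. As $R$ is amenable, the one-point $R$-space is ergodic and amenable, so Corollary~\ref{thm:rep} applies to $\rho_i$ with $Y=\mathrm{pt}$. It produces one of two alternatives: either a complete separable metric space $V$ on which $\bfG_i(k)$ acts by isometries with bounded stabilizers, together with an $R$-fixed point $v\in V$; or a strict $k$-algebraic subgroup $\bfH_i<\bfG_i$ together with an $R$-fixed point $x\in (\bfG_i/\bfH_i)(k)$.

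In the second alternative, the stabilizer of $x$ in $\bfG_i$ is a strict $k$-algebraic subgroup containing $\rho_i(R)$, contradicting the Zariski density of $\rho_i(R)$; this case is therefore excluded. In the first alternative, $\rho_i(R)$ sits inside the stabilizer of $v$, which is bounded by hypothesis, so $\rho_i(R)$ is bounded. Combining over all $i$ yields the desired conclusion.

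The main subtlety I anticipate is checking the standing hypotheses of Corollary~\ref{thm:rep}, in particular that $R$ is locally compact and second countable: this is automatic when $k$ is a local field, but in the full generality of Setup~\ref{setup} the statement must be read with a tacit local compactness hypothesis on $R$, or else $k$ specialized appropriately. A minor but necessary care point is transporting boundedness through the central isogeny $\mathbf{L}/\mathbf{S}\to \prod_i \bfG_i$ at the level of $k$-points, which is justified by the finiteness of its kernel.
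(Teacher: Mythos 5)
Your proof is correct and follows essentially the same route as the paper, whose entire argument is to pass to the product of simple adjoint factors of $\overline{R}^Z$ modulo its solvable radical and apply Corollary~\ref{thm:rep} to each factor with $Y$ a point; your version merely fills in the details (Zariski density excluding the algebraic alternative, and transporting boundedness through the finite isogeny, for which the precise justification is Lemma~\ref{finite morphisms} rather than topological properness). Your caveat about local compactness and second countability of $R$ is a fair observation about a hypothesis the paper also leaves implicit.
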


Indeed, upon moding out the solvable radical of $\overline{R}^Z$, the latter is a product of simple adjoint factors, 
and by the previous corollary the image of $R$ in each factor is bounded.

Note that over various fields, such as $\bbC_p$ and $\bar{\bbF}_p\lppar t\rppar$, every bounded group is amenable, being the closure of an ascending union of compact groups, while for other fields there exist bounded groups which are not amenable.
For example $\SL_2(\bbQ[[t]])$, which is bounded in $\SL_2(\bbQ\lppar t\rppar)$, factors over the discrete group $\SL_2(\bbQ)$ which contains a free group.

\subsection{The structure of the paper.}

The paper has two halves: the first half consisting of \S2,\S3 is devoted to the proof of Theorem~\ref{mainthm}
and the second half is devoted to the proof of Theorem~\ref{BDL}.

In \S2 we collect various needed preliminaries, in particular we discuss the Polish structure on algebraic varieties, 
and on spaces of measures. The most important results in this section are Proposition~\ref{polishing} that discusses 
algebraic varieties and 
and Corollary~\ref{ec-replacement} that uses disintegration as a replacement for a classical ergodic decomposition argument
(which is not applicable in our context, due to the lack of compactness).
The heart of the paper is \S3, where the concept of almost algebraic action is discussed.
Theorem~\ref{mainthm} is proven at \S\ref{proofof}.

In \S4, we give a thorough discussion of bounded subgroups of algebraic groups,
and in \S5, we discuss a suitable replacement of a compactification of coset spaces. 
In \S6, we prove Theorem~\ref{BDL}.

\subsection*{Acknowledgements}

U.B was supported in part by the ERC grant 306706. B.D. was supported in part by Lorraine Region and Lorraine University. B.D \& J.L. were supported in part by ANR grant ANR-14-CE25-0004 GAMME.

\section{Preliminaries}

\subsection{Algebraic varieties as Polish spaces} \label{alg perlim}

Recall that a topological space is called Polish if it is separable and completely metrizable.
For a good survey on the subject we recommend \cite{kechris}.
We mention that the class of Polish spaces is closed under countable disjoint unions and countable products.
A $G_\delta$-subset of a Polish space is Polish so, in particular, a locally closed subset of a Polish space is Polish.
A Hausdorff space which admits a finite open covering by Polish open sets is itself Polish.
Indeed,
such a space is clearly metrizable (e.g.\ by Urysohn metrization theorem \cite[Theorem 1.1]{kechris})
so it is Polish by
Sierpinski theorem \cite[Theorem 8.19]{kechris} which states that the image of a continuous open map from a Polish space to a separable metrizable space is Polish.

A topological group which underlying topological space is Polish is called a Polish group.
Sierpinski theorem also implies that for a Polish group $K$ and a closed subgroup $L$, the quotient topology on $K/L$ is Polish.
Effros Lemma \cite[Lemma 2.5]{effros} says that the quotient topology on $K/L$ is the unique $K$-invariant Polish topology on this space. Another important result of Effros concerning Polish actions (that are continuous actions of Polish groups on Polish spaces) is the following.

\begin{theorem}[Effros theorem {\cite[Theorem 2.1]{effros}}] \label{effros}
For a continuous action of a Polish group $G$ on a Polish space $V$ the following are equivalent.
\begin{enumerate}
\item The quotient topology on $G\backslash V$ is $T_0$.
\item For every $v\in V$, the orbit map $G/\Stab_G(v) \to Gv$ is a homeomorphism.
\end{enumerate}
\end{theorem}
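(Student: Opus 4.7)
The plan is to prove the two implications separately. The easy direction $(2)\Rightarrow(1)$ separates orbits by a Baire-category argument inside a common closure, while the harder direction $(1)\Rightarrow(2)$ first upgrades each orbit to a Polish space and then invokes a standard Pettis--Effros openness argument.

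For $(2)\Rightarrow (1)$, each orbit $Gv$ is homeomorphic to the Polish quotient $G/\Stab_G(v)$, hence Polish in its subspace topology from $V$, and hence a $G_\delta$ in $V$ by Alexandrov's theorem. Given two distinct orbits $Gv,Gw$: if one of them fails to lie in the closure of the other then the complement of that closure is a $G$-invariant open set separating them; otherwise $\overline{Gv}=\overline{Gw}$, in which case $Gv$ and $Gw$ are two dense $G_\delta$ subsets of the Polish space $\overline{Gv}$, so the Baire category theorem forces their intersection to be nonempty, contradicting disjointness of distinct orbits.

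For $(1)\Rightarrow (2)$, the preliminary step is to show that each orbit $Gv$ is $G_\delta$ in $\overline{Gv}$, and hence Polish. Fix a countable base $\{B_i\}$ of $V$; the family $\{GB_i\}$ of $G$-invariant open sets is countable. For any $w\in\overline{Gv}\setminus Gv$ the $T_0$ hypothesis yields an invariant open set separating $v$ from $w$, and since any invariant open set meeting $\overline{Gv}$ must meet the dense subset $Gv$ and is therefore forced to contain $v$, this separating set must contain $v$ and miss $w$; second countability lets us arrange it of the form $GB_i$. Intersecting all $GB_i$ containing $v$ then cuts out exactly $Gv$ inside $\overline{Gv}$. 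Once $Gv$ is known to be Polish, the continuous bijection $G/\Stab_G(v)\to Gv$ is open by the standard Pettis--Effros argument: with $W$ symmetric open satisfying $W^2\subseteq U$, separability of $G$ writes $G=\bigcup_n g_n W$ so that $Gv=\bigcup_n g_n Wv$, and Baire category on the Polish space $Gv$ combined with the Baire property of the analytic sets $g_n Wv$ and the equivariance trick $W^2\subseteq U$ produces an open neighborhood of $v$ in $Gv$ contained in $Uv$.

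The main obstacle is the preliminary $G_\delta$-step of $(1)\Rightarrow (2)$: converting the bare $T_0$ separation axiom on $G\backslash V$ into the Polish structure of each orbit in its closure, which is what makes Baire category applicable to $Gv$ and drives the subsequent Pettis--Effros argument. Everything else --- Alexandrov's characterization of Polish subsets, Baire category, the Pettis--Effros openness lemma, and the fact that $G/\Stab_G(v)$ is Polish --- is classical and should assemble cleanly.
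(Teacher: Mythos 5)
The paper does not actually prove this statement: it is imported verbatim from Effros's article (\cite[Theorem 2.1]{effros}), so there is no internal proof to compare against. Judged on its own, your argument is the standard proof of Effros's theorem and is correct in outline. The direction $(2)\Rightarrow(1)$ is complete: a Polish orbit is $G_\delta$ in $V$, hence dense $G_\delta$ in its closure, and two disjoint dense $G_\delta$ sets in a common Polish closure contradict Baire. The preliminary step of $(1)\Rightarrow(2)$ --- that the $T_0$ hypothesis, filtered through the countable family $\{GB_i\}$ of saturations of basic open sets, exhibits $Gv$ as the intersection of $\overline{Gv}$ with the invariant open sets $GB_i$ containing $v$, hence $G_\delta$ in its closure and Polish --- is also correct, and you rightly identify it as the step where the separation axiom does all the work. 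The one place where your write-up leans on a black box is the very last step of the openness argument: passing from ``$Wv$ is non-meager with the Baire property in $Gv$, hence comeager in some nonempty open $O\subseteq Gv$'' to ``$Uv$ contains a neighborhood of $v$ in $Gv$'' is not immediate; one must translate $O$ back to a neighborhood of $v$ and then show that any orbit point there lies in $W^{-1}Wv$ by intersecting two comeager translates (or run Effros's original shrinking-neighborhoods/completeness argument). This is classical and does not constitute a gap in the approach, but in a self-contained write-up it is the step that would need to be spelled out or cited precisely.
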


Our basic class of Polish actions will be given by actions of algebraic groups on algebraic varieties. As mentioned in Setups~\ref{setup} \& \ref{setupG}, we fixed a complete and separable valued field $(k,|\cdot|)$, that is a field $k$ with an absolute value $|\cdot|$
which is complete and separable (in the sense of having a countable dense subset). See \cite{valued,MR746961}\footnote{In the second reference, the word valuation is used for what we call an absolute value.} for a general discussion on these fields. It is a standard fact that a complete absolute value on a field $F$ has a unique extension to its algebraic closure $\overline{F}$ \cite[\S3.2.4, Theorem 2]{MR746961} and Hensel lemma implies that the completion $\widehat{F}$ of this algebraic closure is still algebraically closed \cite[\S3.4.1, Proposition 3]{MR746961}.

Recall that we identify each $k$-variety $\bf V$ with its set of $\widehat{k}$-points. In particular, this identification yields a topology on $\bf V$. Identifying the affine space $\mathbb{A}^n(\widehat{k})$ with $\widehat{k}^n$, any affine $k$-variety can be seen as a closed subset of $\mathbb{A}^n(\widehat{k})$. More generally, a $k$-variety has a unique topology making its affine charts homeomorphisms. 
Observe that with this topology, the set of $k$-points $V$ of $\bf V$ is closed.

Topological notions, unless otherwise said, will always refer to this topology.
In particular, for the $k$-algebraic group $\bf G$ we fixed,  ${\bf G}$ and $G=\mathbf{G}(k)$ are topological groups.
We note that $V$ actually carries a structure of a $k$-analytic manifold, $G$ is a $k$-analytic group and
the action of $G$ on $V$ is $k$-analytic.
We will not make an explicit use of the analytic structure here.
The interested reader is referred to the excellent text \cite{serre}, in which the theory of analytic manifolds and Lie groups over complete valued fields is developed (see in particular \cite[Part II, Chapter I]{serre}). 

We will discuss the category of $k$-${\bf G}$-varieties.
A \emph{$k$-${\bf G}$-variety} is a $k$-variety endowed with an algebraic action of ${\bf G}$ which is defined over $k$.
A morphism of such varieties is a $k$-morphism which commutes with the ${\bf G}$-action.

\begin{proposition} \label{polishing}
A $k$-variety $\bf V$ and its set of $k$-points $V$ are Polish spaces. In particular, $\bf{G}$ and $G$ are Polish groups.

If ${\bf V}$ is a $k$-${\bf G}$-variety then the $G$-orbits in $V$ are locally closed and the quotient topology on $G\backslash V$ is $T_0$. For $v\in V$, the orbit ${\bf G}v$ is a $k$-subvariety of ${\bf V}$. There exists a $k$-subgroup ${\bf H}<{\bf G}$ contained in the stabilizer of $v$ such that the orbit map ${\bf G}/{\bf H}\to {\bf G}v$ is defined over $k$ and the induced map $G/H \to Gv$ is a homeomorphism, where $H={\bf H}(k)$, $G/H$ is endowed with the quotient space topology and $Gv$ is endowed with the subspace topology.
\end{proposition}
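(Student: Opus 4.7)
The proof splits naturally into two parts: establishing the Polish topology on ${\bf V}$ and $V$, and analyzing the $G$-action on $V$.

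For the Polish structure, the main preliminary is the separability of $\widehat{k}$ (completeness is built into the construction). To see this, write the algebraic closure as a countable union $\overline{k} = \bigcup_n F_n$ of finite extensions of $k$; each $F_n$, carrying the unique extension of $|\cdot|$, is homeomorphic to $k^{[F_n:k]}$ and hence separable, so $\overline{k}$ and its completion $\widehat{k}$ are separable. Thus $\mathbb{A}^n(\widehat{k}) = \widehat{k}^n$ is Polish as a finite product of Polish spaces, any affine $k$-variety is Polish as a closed subset of $\mathbb{A}^n(\widehat{k})$, and a general $k$-variety is Polish via its finite affine open cover together with the Urysohn--Sierpinski combination recalled at the start of this section. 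The set of $k$-points $V$ is closed in ${\bf V}$ (in each affine chart it is the intersection with $k^n \subset \widehat{k}^n$), and hence is Polish as well; in particular ${\bf G}$ and $G$ are Polish groups.

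For the dynamics, fix $v \in V$. Standard algebraic transformation-group theory produces a $k$-subgroup ${\bf H} < {\bf G}$ contained in the scheme-theoretic stabilizer of $v$ such that ${\bf G}/{\bf H}$ exists as a smooth $k$-variety, the orbit map $\pi\colon {\bf G}/{\bf H} \to {\bf G}v$ is an isomorphism onto the locally closed subvariety ${\bf G}v \subset {\bf V}$, and ${\bf H}(k) = \Stab_G(v)$. (In characteristic zero ${\bf H}$ may be taken to be the full stabilizer; in positive characteristic one replaces this by its underlying reduced subgroup, which does not change the $k$-points.) Setting $H = {\bf H}(k)$, the morphism $\pi$ restricts on $k$-points to a continuous bijection $G/H \to Gv$.

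It remains to promote this bijection to a homeomorphism, and dually to show $Gv$ is locally closed in $V$. The key analytic input is the implicit function theorem for $k$-analytic manifolds (see \cite[Part~II]{serre}): since ${\bf G} \to {\bf G}/{\bf H}$ is a smooth morphism of smooth $k$-varieties, it is a $k$-analytic submersion, and hence locally admits $k$-analytic sections. Consequently, the map $G \to ({\bf G}/{\bf H})(k)$ is open onto its image $G/H$, so the quotient topology on $G/H$ agrees with the subspace topology from $({\bf G}/{\bf H})(k)$, and $G/H$ is open in the Polish space $({\bf G}/{\bf H})(k)$. Transporting along the homeomorphism $({\bf G}/{\bf H})(k) \to ({\bf G}v)(k)$ induced by $\pi$, we see that $Gv$ is open in $({\bf G}v)(k)$; since ${\bf G}v$ is locally closed in ${\bf V}$, the set $({\bf G}v)(k)$ is locally closed in $V$, and therefore so is $Gv$. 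The continuous bijection $G/H \to Gv$ is then also open, hence a homeomorphism. Finally, the $T_0$ property of $G \backslash V$ follows from Effros' Theorem~\ref{effros} applied to the Polish $G$-action on $V$ with locally closed orbits.

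The main obstacle is precisely the openness of $G \to ({\bf G}/{\bf H})(k)$ onto $G/H$. Over a general complete valued field, one cannot appeal to the Baire / open-mapping machinery available in the locally compact setting, and the $k$-analytic implicit function theorem is the essential replacement; a priori, $G/H$ could sit as a thin subset of $({\bf G}/{\bf H})(k)$ on which the quotient topology is strictly finer than the subspace topology.
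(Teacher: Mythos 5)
Your overall skeleton matches the paper's (Polish structure on $\widehat{k}$, then affine charts, then local closedness of orbits plus Effros-type arguments), but two steps contain genuine gaps, both located precisely in the generality the paper is designed to cover.

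First, the separability of $\widehat{k}$. You assert that $\overline{k}=\bigcup_n F_n$ is a countable union of finite extensions of $k$. This is unjustified, and it is false for some complete separable valued fields of positive characteristic. Take $k=K\lppar t\rppar$ with $K=\bbF_p(s_1,s_2,\dots)$: this is complete and separable ($K[t,t^{-1}]$ is countable and dense), but $[k:k^p]=\infty$, so $k^{1/p}\subset\overline{k}$ is an infinite-dimensional $k$-vector space which is itself a complete valued field. Any finite extension $F_n$ of $k$ meets $k^{1/p}$ in a finite-dimensional, hence closed and nowhere dense, $k$-subspace, so by Baire no countable family of finite extensions can cover $k^{1/p}$. (Even in characteristic zero, where the claim is true, it requires Krasner's lemma applied to a countable dense subfield.) The paper's route is different and is the one that works in general: pick a countable dense subfield $k_0<k$, note that $\overline{k_0}$ is countable so $\widehat{k_0}$ is separable and algebraically closed, and embed $\overline{k}$ isometrically into $\widehat{k_0}$.

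Second, the orbit analysis breaks in positive characteristic. You take ${\bf H}$ to be the reduced scheme-theoretic stabilizer; this need not be defined over $k$ when $\operatorname{char}(k)=p$ (the paper instead takes ${\bf H}=\overline{\Stab_G(v)}^Z$, which is defined over $k$ by \cite[AG~14.4]{borel} and has the same $k$-points). More seriously, the orbit map $\pi\colon{\bf G}/{\bf H}\to{\bf G}v$ is in general only a bijective, possibly purely inseparable, morphism rather than an isomorphism, and your asserted ``homeomorphism $({\bf G}/{\bf H})(k)\to({\bf G}v)(k)$ induced by $\pi$'' fails: for the additive group acting on $\mathbb{A}^1$ by $t\cdot x=x+t^p$ the orbit map is the Frobenius $t\mapsto t^p$, whose image on $k$-points is $k^p\subsetneq k=({\bf G}v)(k)$, a closed subgroup with empty interior when $k$ is imperfect. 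Consequently your transport argument proves neither that $Gv$ is open in $({\bf G}v)(k)$ (it is not, in this example) nor that $Gv$ is locally closed in $V$; the submersion/implicit-function step only gives openness of $G\to G/H$ inside $({\bf G}/{\bf H})(k)$, which is where the argument stalls. The paper avoids this by citing the appendix of \cite{b-z} for local closedness of orbits (their proof is arranged to survive inseparability over any complete non-trivially valued field), deducing $T_0$ from local closedness by the Baire argument, and only then obtaining the homeomorphism $G/H\to Gv$ as an application of Effros' theorem rather than from any algebraic isomorphism. In characteristic zero your argument is essentially the Bernstein--Zelevinski/Zimmer one and is fine; as written it does not cover the fields (such as $\overline{\bbF}_p\lppar t\rppar$) that motivate the paper.
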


\begin{proof}Let us first explain how the extended absolute value  makes $\widehat{k}$ Polish. 
In our situation $k$ has a countable dense subfield $k_0$. The algebraic closure $\overline{k_0}$ of $k_0$ is still countable and thus its completion $\widehat{k_0}$ is separable  and algebraically closed.  By the universal property of the algebraic closure, $\overline{k}$ embeds in  $\widehat{k_0}$ and by uniqueness of the extension of the absolute value, this embedding is an isometry. Thus $\widehat{k}$ is algebraically closed, complete and separable.

Since $\widehat{k}$ is Polish, so is the affine space $\mathbb{A}^n(\widehat{k}) \simeq \widehat{k}^n$. It follows that $\bf V$ (respectively $V$) is a Polish space, as this space is a Hausdorff space which admits a finite open covering by Polish open sets --- the domains of its $k$-affine charts (respectively their $k$-points).

The fact that the $G$-orbits in $V$ are locally closed is proven in the appendix of \cite{b-z}.
Note that in \cite{b-z} the statement is claimed only for non-Archimedean local fields, but the proof is actually correct for any field with
 complete non-trivial absolute value, which is the setting of \cite[Part II, Chapter III]{serre}
on which \cite{b-z} relies. Another proof can be found in \cite[\S0.5]{French}. It is then immediate that the quotient topology on $G\backslash V$ is $T_0$.


For $v\in V$ the orbit ${\bf G}v$ is a $k$-subvariety of ${\bf V}$ by \cite[Proposition 6.7]{borel}.
We set ${\bf H}=\overline{\Stab_G(v)}^Z$
(note that if $\operatorname{char}(k)=0$ then ${\bf H}=\Stab_{\bf G}(v)$).
By \cite[AG, Theorem~14.4]{borel}, ${\bf H}$ is defined over $k$, and it is straightforward
that $H={\bf H}(k)=\Stab_G(v)$.
By \cite[Theorem 6.8]{borel} the orbit map ${\bf G}/{\bf H} \to {\bf G}v$ is defined over $k$,
thus it restricts to a continuous map from $G/H$ onto $Gv$.
The fact that the latter map is a homeomorphism follows from Effros theorem (Theorem~\ref{effros}) since $G\backslash V$ is $T_0$.
\end{proof}

We emphasize that, as a special case of Proposition \ref{polishing}, we get that for every $k$-algebraic subgroup $\bf H$ of $\bf G$, the embedding $G/H\to {\bf G}/{\bf H}(k)$ is a homeomorphism on its image. We will use this fact freely in the sequel.

\subsection{Spaces of measures as Polish spaces}

In this subsection $V$ denotes a Polish space. 
We let $\Prob(V)$ be the set of Borel probability measures on $V$, endowed with the weak*-topology 
(also called \emph{the topology of weak convergence}). 
This topology comes from the embedding of $\Prob(V)$ in the dual of the Banach space of bounded continuous functions on $V$. 
If $d$ is a  complete metric on $V$ which is compatible with the topology (the metric topology coincides with the original topology on $V$), the corresponding Prokhorov metric $\bf{d}$ on $\Prob(V)$ is defined as follows:
for $\mu,\nu\in\Prob(V)$, $\bf{d}(\mu,\nu)$ is the infimum of $\varepsilon>0$ such that for all Borel subset $A\subseteq V$, $\mu(A)\leq\nu(A_\varepsilon)+\varepsilon$ and symmetrically $\nu(A)\leq\mu(A_\varepsilon)+\varepsilon$, where $A_\varepsilon$ is the $\varepsilon$-neighborhood (for $d$) around $A$.
The following theorem summarizes some standard results, see Chapter 6 and Appendix III of \cite{Billingsley}.

\begin{theorem}[Prokhorov]\label{Prok}
The metric space $(\Prob(V),\bf{d})$ is complete and separable and the topology induced by $\bf{d}$ on $\Prob(V)$ is the weak*-topology.
In particular the space $\Prob(V)$ endowed with the weak*-topology is Polish.

A subset $C$ in $\Prob(V)$ is precompact if and only if it is tight: for every $\epsilon>0$ there exists compact $K\subset V$ such that for every $\mu\in C$,
$\mu(K)>1-\epsilon$.
In particular $\Prob(V)$ is compact if $V$ is.
\end{theorem}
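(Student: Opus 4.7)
The plan is to treat the five assertions in the order they are stated and to lean on the Portmanteau characterization of weak convergence throughout.

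First I would establish separability. Choose a countable dense subset $D\subset V$; then finite convex combinations with rational coefficients of Dirac masses supported on $D$ form a countable set, and I would show it is weak*-dense by approximating a given $\mu\in\Prob(V)$ on a countable cover of $V$ by small open balls with centers in $D$. Next, to identify the Prokhorov-metric topology with the weak*-topology, I would pass through Portmanteau: the statements ``$\mu_n\to\mu$ weakly'', ``$\limsup \mu_n(F)\le\mu(F)$ for all closed $F$'', and ``$\mu_n(A)\le\mu(A_\varepsilon)+\varepsilon$ and symmetrically eventually, for all Borel $A$ and $\varepsilon>0$'' are mutually equivalent. The last formulation is visibly a rephrasing of $\mathbf{d}(\mu_n,\mu)\to 0$, which pins down the topology.

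For completeness, I would take a Cauchy sequence $(\mu_n)$ in $\mathbf{d}$; using the Cauchy condition together with the completeness of $(V,d)$, I would check tightness of $\{\mu_n\}$ by choosing, for each $\varepsilon$, a finite $\varepsilon/2^k$-net $F_k\subset D$ large enough that every $\mu_n$ puts mass at least $1-\varepsilon/2^k$ on the $\varepsilon/2^k$-neighborhood of $F_k$; the totally bounded intersection closes up to a compact set witnessing tightness. Then Prokhorov's extraction theorem (next step) produces a weakly convergent subsequence, and a standard argument upgrades Cauchy plus subsequential convergence to convergence of the full sequence in $\mathbf{d}$.

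The main step, and the genuine obstacle, is the tightness-precompactness equivalence. The easy direction is that a precompact $C$ is tight: a finite $\varepsilon$-net $\{\nu_1,\dots,\nu_N\}$ in $(C,\mathbf{d})$ can be individually approximated by tight sets (each single measure is tight on a Polish space by inner regularity) and one combines them, using the Prokhorov inequalities to transfer tightness of the $\nu_i$'s to neighboring measures. For the converse, given tight $C$ and a sequence $(\mu_n)\subset C$, I would reduce to the case that all $\mu_n$ are concentrated on a fixed compact $K\subset V$, pass to a weak*-limit $\mu$ on $K$ via the Riesz representation theorem applied to $C(K)^*$ and Banach-Alaoglu on its unit ball (which is sequentially compact because $C(K)$ is separable), and then exhaust $V$ by an increasing sequence of compacts given by tightness to promote this to a weak*-limit on $V$. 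This last diagonal argument together with the Portmanteau equivalence is the delicate part. Finally, compactness of $\Prob(V)$ when $V$ is compact is the tautological special case, since $\{\mu\in\Prob(V)\}$ itself is tight, witnessed by $K=V$.
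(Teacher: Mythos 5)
The paper offers no proof of this theorem at all---it is quoted as a standard package of results with a pointer to Billingsley (Chapter 6 and Appendix III)---and your outline is precisely the classical argument from that source: Portmanteau to identify the Prokhorov-metric topology with the weak* topology, tightness of Cauchy sequences for completeness, and diagonal extraction over an exhaustion by compacts for the tightness--precompactness equivalence, so it is correct and matches the intended (cited) proof. The one spot to be careful about when writing it out in full is the ``precompact implies tight'' direction: the $\varepsilon$-neighborhood of a compact set is not compact, so one cannot literally transfer a compact witness from the net points via the Prokhorov inequality, but must instead pass through finite unions of small balls and intersect over a sequence of scales, using completeness of $V$ to conclude---exactly the totally-bounded-set device you already deploy in your completeness step.
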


\begin{remark}
Replacing if necessary $d$ by a bounded metric, we note that there is another metric on $\Prob(V)$ with the same properties (metrizing the weak*-topology and being invariant under isometries): the Wasserstein metric \cite[Corollary 6.13]{Villani}. 
\end{remark}

We endow $\Homeo(V)$ with the pointwise convergence topology. 
The following is a standard application of the Baire category theorem, see {\cite[Theorem 9.14]{kechris}}.

\begin{theorem}\label{cont-action}
Assume $G$ is acting by homeomorphisms on $V$. Then the action map $G\times V \to V$ is continuous if and only if the homomorphism $G\to\Homeo(V)$ is continuous.
\end{theorem}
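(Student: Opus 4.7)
The plan is to prove the two implications separately. The forward direction is a direct unfolding of definitions: if $a \colon G \times V \to V$ is continuous, then a subbasic open neighborhood of $\rho(g_0)$ in the pointwise-convergence topology on $\Homeo(V)$ has the form $N_{v,U} = \{h : h(v) \in U\}$ for $v \in V$ and $U \subseteq V$ open; its preimage under $\rho$ is $\{g \in G : a(g,v) \in U\}$, the preimage of $U$ under the continuous restriction $g \mapsto a(g,v)$, hence open. So $\rho$ is continuous.

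The reverse direction is the substance. Assume $\rho$ is continuous. The definition of the pointwise topology gives continuity of $g \mapsto g \cdot v$ for each $v$, while the hypothesis that each $g$ acts as a homeomorphism gives continuity in $v$ for each $g$; thus $a$ is separately continuous. I will establish continuity at an arbitrary $(g_1, v_0) \in G \times V$. Fix a compatible complete metric $d$ on $V$ and $\varepsilon > 0$, and for each $n \geq 1$ set
\[ A_n = \bigl\{ g \in G : d(g \cdot v, g \cdot v_0) \leq \varepsilon \text{ for all } v \text{ with } d(v, v_0) \leq 1/n \bigr\}. \]
Each $A_n$ is closed: if $g_k \in A_n$ and $g_k \to g$, separate continuity gives $g_k \cdot v \to g \cdot v$ and $g_k \cdot v_0 \to g \cdot v_0$ for every fixed $v$ in the closed ball of radius $1/n$ about $v_0$, so the inequality passes to the limit. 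The union $\bigcup_n A_n$ equals $G$, since each $g$ is a homeomorphism and so $v \mapsto g \cdot v$ is continuous at $v_0$. By Baire category applied to the Polish group $G$, some $A_n$ has non-empty interior, yielding an open neighborhood $\Omega$ of some $g_0 \in G$ on which the family $\{v \mapsto g \cdot v : g \in \Omega\}$ is equicontinuous at $v_0$. The triangle inequality
\[ d(g \cdot v, g_0 \cdot v_0) \leq d(g \cdot v, g \cdot v_0) + d(g \cdot v_0, g_0 \cdot v_0), \]
combined with separate continuity of $g \mapsto g \cdot v_0$ at $g_0$, delivers joint continuity at $(g_0, v_0)$. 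Continuity at the target $(g_1, v_0)$ then follows by pre-composing with the homeomorphism $g \mapsto g_0 g_1^{-1} g$ of $G$ and post-composing with the homeomorphism $w \mapsto (g_1 g_0^{-1}) \cdot w$ of $V$.

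The main obstacle is the Baire-category upgrade from separate to joint continuity; everything else is formal. The delicate choice is to track $d(g \cdot v, g \cdot v_0)$ rather than $d(g \cdot v, v_0)$ in the definition of $A_n$, since only the former both passes to limits under separate continuity and yields the equicontinuity that, paired with continuity of $g \mapsto g \cdot v_0$, closes the triangle-inequality estimate. This is the standard Baire-category argument noted as \cite[Theorem 9.14]{kechris}.
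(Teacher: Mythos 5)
The forward direction and the overall architecture of the reverse direction (separate continuity plus a Baire-category upgrade, which is exactly the content of the result the paper outsources to \cite[Theorem 9.14]{kechris}) are right, but the Baire step has a genuine gap. You fix a single $\varepsilon>0$ before applying Baire, so what you actually obtain is an open set $\Omega$ and a radius $1/n$ with $d(g\cdot v,g\cdot v_0)\leq\varepsilon$ for all $g\in\Omega$ and $d(v,v_0)\leq 1/n$: this is an ``$\varepsilon$-equicontinuity'' statement for that one $\varepsilon$, not equicontinuity of $\{v\mapsto g\cdot v:g\in\Omega\}$ at $v_0$. Crucially, the point $g_0$ and the set $\Omega$ produced by Baire depend on $\varepsilon$, so the triangle inequality gives, for each $\varepsilon$, a $2\varepsilon$-estimate near a possibly different point $(g_0(\varepsilon),v_0)$; that is not joint continuity at any single $(g_0,v_0)$. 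The translation step then fails to close the argument: writing $a(g,v)=(g_1g_0^{-1})\cdot a(g_0g_1^{-1}g,v)$, the homeomorphism $w\mapsto(g_1g_0^{-1})\cdot w$ is merely continuous at $g_0\cdot v_0$, with a modulus of continuity you do not control and which changes with $\varepsilon$ because $g_0$ does; a $2\varepsilon$-estimate before applying it therefore does not yield an estimate tending to $0$ after applying it.

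The repair is standard and short: run your construction for $\varepsilon_m=1/m$, obtaining closed sets $A_n^{(m)}$ with $\bigcup_n A_n^{(m)}=G$ for each $m$. Then $D_m=\bigcup_n\interior\bigl(A_n^{(m)}\bigr)$ is dense open (apply Baire inside every nonempty open subset of $G$, which is again a Baire space), so $D=\bigcap_m D_m$ is comeager, hence nonempty. Any $g_0\in D$ is a point at which the family $\{v\mapsto g\cdot v\}$ is genuinely equicontinuous at $v_0$, uniformly over a neighborhood of $g_0$ for every $\varepsilon$ simultaneously; your triangle-inequality and translation steps then go through verbatim, since they only compose with maps continuous at the relevant fixed points. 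Note that the paper itself offers no proof of this statement but cites Kechris, whose argument is precisely this comeager refinement of the one you wrote.
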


\begin{lemma}\label{contprob} 
If $G$ acts continuously on $V$ then it also acts continuously on $\Prob(V)$ and if the action $G\action (V,d)$ is by isometries, the action $G\action(\Prob(V),{\bf d})$ is also by isometries.
 \end{lemma}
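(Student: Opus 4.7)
The plan is to handle the two assertions separately, as they are largely independent.

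For the continuity of the induced action, first I note that for each $g\in G$ the self-homeomorphism $v\mapsto gv$ of $V$ induces, by pushforward, a self-homeomorphism $g_*$ of $\Prob(V)$, with continuous inverse $(g^{-1})_*$. This gives a homomorphism $G\to\Homeo(\Prob(V))$, and by Theorem~\ref{cont-action} it suffices to show that this homomorphism is continuous in the pointwise-convergence topology. So I fix $\mu\in\Prob(V)$ and a convergent sequence $g_n\to g$ in $G$, and for any bounded continuous $f:V\to\bbR$ I compute
\[ \int_V f\,d(g_n\mu) = \int_V f(g_n v)\,d\mu(v). \]
The continuity of the $G$-action on $V$ gives $f(g_n v)\to f(gv)$ pointwise, with $|f(g_n v)|\leq\|f\|_\infty$, so dominated convergence yields convergence of the integrals, which is exactly the weak*-convergence $g_n\mu\to g\mu$.

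For the isometric statement, I fix $g\in G$ acting on $(V,d)$ by an isometry. Since both $g$ and $g^{-1}$ are distance-preserving, for any Borel $A\subseteq V$ one has the equality $(g^{-1}A)_\varepsilon = g^{-1}(A_\varepsilon)$. Hence whenever $\mathbf{d}(\mu,\nu)\leq\varepsilon$, applying the defining Prokhorov inequality to the Borel set $g^{-1}A$ gives
\[ (g\mu)(A) = \mu(g^{-1}A) \leq \nu\bigl((g^{-1}A)_\varepsilon\bigr)+\varepsilon = (g\nu)(A_\varepsilon)+\varepsilon, \]
and symmetrically $(g\nu)(A)\leq(g\mu)(A_\varepsilon)+\varepsilon$. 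This shows $\mathbf{d}(g\mu,g\nu)\leq\mathbf{d}(\mu,\nu)$, and replacing $g$ by $g^{-1}$ gives the reverse inequality, so $g$ acts on $(\Prob(V),\mathbf{d})$ by an isometry.

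I do not anticipate a serious obstacle. The only delicate step is the weak*-continuity of pushforward, which reduces cleanly via Theorem~\ref{cont-action} to the weak*-continuity of the single orbit map $g\mapsto g_*\mu$, which in turn is handled by dominated convergence.
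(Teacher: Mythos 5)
Your proposal is correct and follows essentially the same route as the paper: the continuity is reduced via Theorem~\ref{cont-action} to weak*-convergence of $g_n\mu$, which is then obtained by dominated convergence against bounded continuous test functions, and the isometry statement is a direct verification from the definition of the Prokhorov metric (which the paper declares straightforward and you have usefully spelled out via the identity $(g^{-1}A)_\varepsilon=g^{-1}(A_\varepsilon)$ for isometries).
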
 

\begin{proof} 
The fact that $G$ acts by isometries on $\Prob(V)$ when $G$ acts by isometries on $V$ is straightforward from the definition of the Prokhorov metric.
In order to prove that $G$ acts continuously on $\Prob(V)$ when it acts continuously on $V$ it is enough, by Theorem~\ref{cont-action}, to show that for every $\mu\in \Prob(V)$
and every sequence $g_n$ in $G$, $g_n\to e$ in $G$ implies $g_n\mu\to \mu$ in $\Prob(V)$.
Fix $\mu\in \Prob(V)$ and assume $g_n \to e$ in $G$.
For every bounded continuous function $f$ on $V$, we have by Lebesgue bounded convergence theorem
\[ \int f(x) \dd \,(g_n\mu)(x) = \int f(g_nx)\dd\mu(x) \to \int f(x)\dd\mu(x) \]
as for every $x\in V$, $g_nx\to x$. Thus, by the definition of the weak*-topology $g_n\mu\to \mu$.
\end{proof}

We observe that Lemma \ref{contprob} and Proposition \ref{polishing} show that if $\bf V$ is a $k$-${\bf G}$-variety then $G$ acts continuously on $V={\bf V}(k)$ and on $\Prob(V)$.
The following is a nice application of Prokhorov theorem (Theorem~\ref{Prok}).

\begin{lemma} \label{Prokhorov}
If the action of $G$ on $V$ is proper then the action of $G$ on $\Prob(V)$ is proper as well.
\end{lemma}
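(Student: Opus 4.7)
The plan is to reduce properness on $\Prob(V)$ to properness on $V$ via Prokhorov's tightness criterion. Recall that the continuous $G$-action on $V$ is proper iff for every pair of compact sets $K, L \subseteq V$, the return set
\[
G_{K,L} \defq \{ g \in G \mid gK \cap L \neq \emptyset \}
\]
is relatively compact in $G$; we want to establish the same for compact $C, D \subseteq \Prob(V)$ in place of $K, L$.

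First I would fix compact sets $C, D \subseteq \Prob(V)$ and appeal to Theorem~\ref{Prok}: both $C$ and $D$ are tight, so for any $\varepsilon \in (0, 1/2)$ there exist compact $K, L \subseteq V$ with $\mu(K) > 1 - \varepsilon$ for all $\mu \in C$ and $\nu(L) > 1 - \varepsilon$ for all $\nu \in D$. Next I would take any $g \in G$ with $gC \cap D \neq \emptyset$, pick $\mu \in C$ such that $g\mu \in D$, and observe that
\[
\mu(K) + \mu(g^{-1}L) > 2(1-\varepsilon) > 1,
\]
since $\mu(g^{-1}L) = g\mu(L) > 1 - \varepsilon$. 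Hence $K \cap g^{-1}L$ has positive $\mu$-measure and is in particular nonempty, so $g \in G_{K,L}$.

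This shows $\{ g \in G \mid gC \cap D \neq \emptyset \} \subseteq G_{K,L}$, and by properness of $G \curvearrowright V$ the right-hand side is relatively compact, which is exactly what is needed for properness of $G \curvearrowright \Prob(V)$. The only subtlety worth mentioning is that we already know $G$ acts continuously on $\Prob(V)$ by Lemma~\ref{contprob}, so the resulting action is indeed a continuous action whose return sets between compacta are relatively compact, hence proper. There is no real obstacle here: the argument is a direct application of tightness, and the factor of two in the choice of $\varepsilon$ is the only calculation involved.
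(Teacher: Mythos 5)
Your proof is correct and follows essentially the same route as the paper: both arguments use the tightness half of Prokhorov's theorem to replace compact sets of measures by compact subsets of $V$ carrying more than half the mass, and then observe that any $g$ returning a measure must return the corresponding compact set. The only cosmetic difference is that you work with the two-compact-set formulation of properness and a general $\varepsilon<1/2$, whereas the paper takes $C=D$ and $\varepsilon=1/2$ directly.
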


\begin{proof}
For a compact $C\subset \Prob(V)$ we can find a compact $K\subset V$ with $\mu(K)>1/2$ for every $\mu\in C$ by Theorem~\ref{Prok}. 
Then for $g\in G$ and $\mu\in C$ such that $g\mu\in C$ we get that both $\mu(K)>1/2$ and $\mu(gK)=g\mu(K)>1/2$,
thus $gK\cap K\neq \emptyset$.
We conclude that $\{g\in G~|~gC\cap C\neq\emptyset\}$ is precompact, as it is a subset of the precompact set $\{g\in G~|~gK\cap K\neq\emptyset\}$.
\end{proof}

\subsection{Polish extensions and disintegration}



\begin{definition}
A \emph{Polish fibration} is a continuous map $p:V\to U$ where $U$ is a $T_0$-space and $V$ a Polish space. 
An action of $G$ on such a Polish fibration is a pair of continuous actions on $V$ and $U$ such that $p$ is equivariant.
\end{definition}

Let $p:V\to U$ be a Polish fibration. Let $\Prob_U(V)$ be the set of probability measures on $V$ which are supported on one fiber.  We denote $p_\bullet:\Prob_U(V)\to U$ the natural map.

\begin{lemma}\label{probuv} The map $p_\bullet$ is a Polish fibration. If the group $G$ acts on the Polish fibration $V\to U$, then it also acts on $p_\bullet$.
\end{lemma}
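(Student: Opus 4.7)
The plan is to equip $\Prob_U(V)$ with the subspace topology inherited from $\Prob(V)$, which is Polish by Theorem~\ref{Prok}, and then verify three things in turn: continuity of $p_\bullet$, Polish-ness of $\Prob_U(V)$, and compatibility with the $G$-action when one is given.

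For continuity of $p_\bullet$, I would argue via the portmanteau theorem. Suppose $\mu_n \to \mu$ in $\Prob_U(V)$ with $\mu_n$ supported on $p^{-1}(u_n)$ and $\mu$ on $p^{-1}(u)$. Given any open $W \ni u$ in $U$, the set $p^{-1}(W)$ is open in $V$ with $\mu(p^{-1}(W)) = 1$, so $\liminf \mu_n(p^{-1}(W)) \geq 1$. Since $\mu_n$ lives on the single fiber $p^{-1}(u_n)$, we have $\mu_n(p^{-1}(W)) \in \{0,1\}$, and the liminf estimate forces this value to equal $1$ for large $n$, i.e., $u_n \in W$ eventually. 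Hence $u_n \to u$ in $U$.

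For the Polish structure I would show that $\Prob_U(V)$ is a $G_\delta$ subset of $\Prob(V)$. The characterization I rely on is: $\mu \in \Prob_U(V)$ if and only if $\mu(p^{-1}(W)) \in \{0,1\}$ for every open $W \subset U$, reduced to a countable family $\{W_i\}$ by exploiting the second countability of $V$ (while $U$ need only be $T_0$). This reduction is where I expect the main technical care; the point is to organize a countable family of open $W_i$'s such that the conditions $\mu(p^{-1}(W_i)) \in \{0,1\}$ already produce a single $u$ with $\mu(p^{-1}(u)) = 1$, which one can do by intersecting the basis preimages of full measure and using that $V$ is second countable. Granted this, each set $\{\mu : \mu(p^{-1}(W_i)) \in \{0,1\}\}$ is the union of the closed set $\{\mu(p^{-1}(W_i)) = 0\}$ with the $G_\delta$ set $\{\mu(p^{-1}(W_i)) = 1\}$, both arising from lower semicontinuity of $\mu \mapsto \mu(O)$ for open $O$. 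This union is itself $G_\delta$, by the distributive identity $F \cup \bigcap_n U_n = \bigcap_n(F \cup U_n)$ together with the fact that the union of a closed set and an open set is $G_\delta$ in a metric space, and a countable intersection of $G_\delta$ sets is again $G_\delta$.

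Finally, if $G$ acts on the Polish fibration $V \to U$, equivariance of $p$ implies that $g\mu$ is supported on $p^{-1}(gu)$ whenever $\mu$ is on $p^{-1}(u)$; thus $\Prob_U(V)$ is $G$-invariant and $p_\bullet$ is $G$-equivariant. By Lemma~\ref{contprob} the action of $G$ on $\Prob(V)$ is continuous, and it therefore restricts to a continuous action on the subspace $\Prob_U(V)$, which together with the previous two points gives the desired equivariant Polish fibration.
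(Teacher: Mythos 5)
Your continuity argument for $p_\bullet$ and your treatment of the $G$-action are both fine and match the paper's. The gap is in the middle step, and it is a real one: the characterization ``$\mu\in\Prob_U(V)$ if and only if $\mu(p^{-1}(W))\in\{0,1\}$ for every open $W\subseteq U$'' is false, so no reduction to a countable subfamily $\{W_i\}$ can rescue it. Take $V=\bbR$ with its usual (Polish) topology, $U=\bbR$ with the cofinite topology (which is $T_1$, hence $T_0$, and coarser than the usual topology, so the identity map $p$ is a Polish fibration), and $\mu$ equal to Lebesgue measure on $[0,1]$. Every nonempty open $W\subseteq U$ is cofinite, so $\mu(p^{-1}(W))=1$ for all of them; yet the fibers are singletons and $\mu$ is not a Dirac mass. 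The same example kills the proposed ``intersect the full-measure preimages'' step: the intersection of countably many cofinite sets is co-countable, never a single fiber. The underlying point is that the topology of $U$ may be far coarser than the quotient topology on the fiber space, so the class $\{p^{-1}(W): W\ \text{open in}\ U\}$ is too poor to detect whether a measure charges more than one fiber.

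The paper avoids this by working with a countable family $(C_n)$ of subsets of $V$ that are \emph{closed in the Polish topology of $V$} and saturated (unions of fibers), but not required to be of the form $p^{-1}(\text{closed in }U)$; it asserts that the $T_0$ hypothesis yields such a countable separating family, and then characterizes $\Prob_U(V)$ by the conditions $\mu(C_n)\mu(V\setminus C_n)=0$, each of which cuts out a union of a closed set and a $G_\delta$ set exactly as in your bookkeeping (which is correct). In the cofinite example these $C_n$ can be taken to be the rational closed intervals --- closed in $V$ and trivially saturated, though not preimages of closed subsets of $U$ --- and they do separate the fibers. So to repair your proof you must enlarge the class of test sets from preimages of open subsets of $U$ to saturated sets that are closed or open in $V$ itself, and justify the existence of a countable separating family of these; the rest of your argument then goes through unchanged.
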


\begin{proof}
Since $U$ is $T_0$, fibers of $p$ are separated by  a countable family $(C_n)$  of closed saturated subsets of $V$. A probability measure $\mu$ is supported on one fiber if and only if for all $n$, $\mu(C_n)\mu(V\setminus C_n)=0$. The set $\{\mu\in\Prob(V),\ \mu(C_n)=1\}$ is closed and $\{\mu\in\Prob(V),\ \mu(V\setminus C_n)=1\}$ is $G_\delta$ since for all $0<r<1$, $\{\mu\in\Prob(V),\ \mu(V\setminus C_n)>r\}$ is open. So  $\Prob_U(V)$ is a $G_\delta$-subset of $\Prob(V)$ and thus Polish.

Let us show that $p_\bullet$ is continuous. Assume $\mu_n\to\mu$ in $\Prob_U(V)$. Let $u=p_\bullet(\mu)$ and $u_n=p_\bullet(\mu_n)$. Let $O\subseteq U$ be an open set containing $u$. For $n$ large enough, $\mu_n(p^{-1}(O))>1/2$ and thus $u_n\in O$. 

If $G$ acts on $V\to U$, it is clear that $G$ acts on $\Prob_U(V)$. The continuity of the action on $\Prob_U(V)$ follows from Lemma \ref{contprob}. 
\end{proof}

Let $(U,\nu)$ be a probability space and $X$ be a Polish space, we denote  by $\LL^0(U,X)$ the space of classes of measurable maps from $U$ to $X$, under the equivalence relation of equality $\nu$-almost everywhere.
Note that the dependence on $\nu$ is implicit in our notation.
We endow that space with the topology of convergence in probability. Fixing a compatible metric $d$ on $X$, this topology is metrized as follows: for  $\phi,\phi'\in\LL^0(U,X)$, the distance between $\phi$ and $\phi'$ is 
$$\delta(\phi,\phi')=\int_X\min(d(\phi(v),\phi'(v)),1)\,\dd \nu(v).$$ This topology can be also defined using sequences: $\phi_n\to\phi$ if for any $\varepsilon>0$, there is $A\subseteq U$ such that $\nu(A)>1-\varepsilon$ and for all $n$ sufficiently large and all $v\in A$, $d(\phi(v),\phi_n(v))<\varepsilon$. We note that this topology on $\LL^0(U,X)$ does not depend on the choice of an equivalent metric on $V$.  This turns $\LL^0(U,X)$ into a Polish space.

\begin{lemma} \label{convsubseq}
Assume $(\alpha_n)$ is a sequence converging to $\alpha$ in probability in $\LL^0(U,X)$.
Then there exists a subsequence $\alpha_{n_k}$ which convergence $\nu$-a.e. to $\alpha$,
that is for $\nu$-almost every $u\in U$, $\alpha_{n_k}(u)$ converges to $\alpha(u)$ in $X$.
\end{lemma}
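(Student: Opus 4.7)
The plan is to invoke the classical Borel--Cantelli trick that upgrades convergence in probability to almost-everywhere convergence along a subsequence. The key input is the metric $\delta(\phi,\phi')=\int_U \min(d(\phi(u),\phi'(u)),1)\,\dd\nu(u)$ given in the paper, from which a Markov-type inequality produces control on the measure of exceptional sets.

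First I would fix $\varepsilon\in(0,1)$ and observe that since $\min(d(\alpha_n(u),\alpha(u)),1)\ge \varepsilon\cdot \mathbf{1}_{\{d(\alpha_n,\alpha)>\varepsilon\}}$, Markov's inequality yields
\[
\nu\bigl(\{u\in U:d(\alpha_n(u),\alpha(u))>\varepsilon\}\bigr)\le \frac{1}{\varepsilon}\,\delta(\alpha_n,\alpha).
\]
Since $\delta(\alpha_n,\alpha)\to 0$ by hypothesis, this shows that for every fixed $\varepsilon>0$ the measure of the bad set tends to $0$.

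Next I would extract the subsequence. For each integer $k\ge 1$, using the displayed bound with $\varepsilon=2^{-k}$, I can select indices $n_1<n_2<\dots$ such that
\[
\nu(E_k)<2^{-k},\quad\text{where}\quad E_k:=\{u\in U:d(\alpha_{n_k}(u),\alpha(u))>2^{-k}\}.
\]
Then $\sum_k\nu(E_k)<\infty$, so the Borel--Cantelli lemma gives $\nu(\limsup_k E_k)=0$.

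Finally, for any $u\notin\limsup_k E_k$ there exists $K=K(u)$ such that $u\notin E_k$ for all $k\ge K$, which means $d(\alpha_{n_k}(u),\alpha(u))\le 2^{-k}$ for $k\ge K$; hence $\alpha_{n_k}(u)\to\alpha(u)$ in $X$. The exceptional set has $\nu$-measure zero, proving the lemma. There is no real obstacle here: the only point to watch is that the metric $\delta$ from the paper is truncated at $1$, but this only affects the Markov step for $\varepsilon<1$, which is exactly the range needed.
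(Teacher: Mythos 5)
Your proof is correct and is precisely the ``standard proof, mutatis mutandis'' that the paper alludes to: a Markov inequality applied to the truncated metric $\delta$, followed by a choice of indices with $\nu(E_k)<2^{-k}$ and Borel--Cantelli. The paper's own write-up takes a slightly lazier route to the same place: it observes that $u\mapsto d(\alpha_n(u),\alpha(u))$ converges to $0$ in probability in $\LL^0(U,\bbR)$ and then cites the classical real-valued statement, whereas you carry out the extraction directly for general $X$; the two arguments are mathematically the same, and your version has the minor virtue of being self-contained.
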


The proof of the lemma is standard, but in most textbooks it appears only for the cases $X=\bbR$ or $X=\bbC$,
see for example \cite[Theorem~2.30]{Folland}.
Even though the standard proof works mutatis-mutandis, we give below a short argument, reducing the general case to the case $X=\bbR$.

\begin{proof}
Observe that the sequence $d(\alpha_n,\alpha)$ (which denotes the map $u\mapsto d(\alpha_n(u),\alpha(u))$) converges in probability to 0 in $\LL^0(U,\bbR)$.
Thus there exists a subsequence 
$d(\alpha_{n_k},\alpha)$ converging to 0 a.e, and we get that $\alpha_{n_k}$ converges to $\alpha$ a.e.
\end{proof}

If $p\colon V\to U$ is a Polish fibration, and $\nu$ is a measure on $U$, 
we denote $\LL^0_p(U,V)$ the space of measurable (identified if agree almost everywhere) sections of $p$, i.e. maps which associates to $u\in U$ a point in $p^{-1}(U)$, endowed with the induced topology from $\LL^0(U,V)$. If $G$ acts on the Polish fibration $p$, it also acts on $\LL^0_p(U,V)$ via the formula $(gf)(u)=gf(g^{-1}u)$ where $u\in U$ and $f\in\LL^0_p(U,V)$. 

The following theorem is a variation of the classical theorem of disintegration of measures. It is essentially proven in \cite{simmons}. 

\begin{theorem} \label{disintegration}
Let $p:V\to U$ be a Polish fibration and $\nu$ be a probability measure on $U$. Let $P=\{\mu\in\Prob(V)\mid p_*\mu=\nu\}$. 
For every $\alpha\in \LL^0_{p_\bullet}\left(U,\Prob_U(V)\right)$ the formula $\int_U\alpha(u) \dd\nu$ defines an element of $P$.
The map thus obtained $\LL^0_{p_\bullet}\left(U,\Prob_U(V)\right) \to P$ is a homeomorphism onto.
\end{theorem}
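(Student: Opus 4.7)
My plan is to verify four claims in order: that $\alpha\mapsto\int_U\alpha(u)\,\dd\nu(u)$ takes values in $P$; that this map is continuous; that it is a bijection onto $P$; and that its inverse is continuous. The first three should follow routinely from the tools already assembled, and the serious work lies entirely in the last.

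For well-definedness, given $\alpha\in\LL^0_{p_\bullet}(U,\Prob_U(V))$, I set $\mu(A):=\int_U\alpha(u)(A)\,\dd\nu(u)$ for Borel $A\subseteq V$. Measurability of $u\mapsto\alpha(u)(A)$ follows from measurability of $\alpha$ and the Borel structure on $\Prob(V)$, monotone convergence yields countable additivity, and since $\alpha(u)\in\Prob_U(V)$ is supported on the fiber $p^{-1}(u)$, one reads off $p_*\mu=\nu$. For continuity, if $\alpha_n\to\alpha$ in probability, Lemma~\ref{convsubseq} extracts a subsequence along which $\alpha_{n_k}(u)\to\alpha(u)$ weakly in $\Prob(V)$ for $\nu$-almost every $u$. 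For each $f\in C_b(V)$, the functions $u\mapsto\int f\,\dd\alpha_{n_k}(u)$ are bounded by $\|f\|_\infty$ and converge pointwise a.e., so bounded convergence gives $\int f\,\dd\mu_{n_k}\to\int f\,\dd\mu$; the subsequence-of-subsequence principle then upgrades this to $\mu_n\to\mu$ weakly. Bijectivity is the classical existence-and-uniqueness of regular conditional probabilities over a standard Borel base, for which I would invoke \cite{simmons}: existence gives surjectivity, and essential uniqueness gives injectivity.

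The main obstacle is continuity of the inverse: given $\mu_n\to\mu$ weakly in $P$ with corresponding sections $\alpha_n,\alpha$, one must show $\alpha_n\to\alpha$ in probability. My plan is first to note that the inverse is automatically Borel, since $\alpha(u)$ can be recovered from $\mu$ as a pointwise limit of conditional expectations against a countable family in $C_b(V)$ separating $\Prob(V)$. To upgrade Borel measurability to topological continuity I would argue locally: using Lusin's theorem, approximate $\alpha$ by a continuous section on a compact $K\subseteq U$ with $\nu(U\setminus K)<\varepsilon$, then combine with Prokhorov tightness (Theorem~\ref{Prok}) to transfer weak-star control of $\mu_n$ on the tube $p^{-1}(K)$ into Prokhorov control of $\alpha_n(u)$ uniformly for $u$ in a further subset of $K$ of nearly full measure. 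The final step is to translate this local uniform control, applied along an exhaustion $K_j\uparrow U$ with $\nu(U\setminus K_j)\to 0$, into convergence in probability of the full sections. This local-to-global passage is essentially the content of \cite{simmons}, and is where the technical difficulty of the theorem resides.
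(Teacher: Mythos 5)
Your treatment of well-definedness, of the continuity of $\alpha\mapsto\int_U\alpha(u)\,\dd\nu$, and of bijectivity is correct and essentially identical to the paper's: the paper likewise proves continuity of the forward map by passing to an a.e.\ convergent subsequence and applying bounded convergence, and likewise delegates existence and essential uniqueness of the disintegration to \cite{simmons}, after first reducing to a standard base by a Borel embedding of the countably separated space $U$ into $[0,1]$.

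The gap is in your fourth step, and it is not merely technical: the mechanism you propose --- transferring weak-star control of $\mu_n$ on a tube $p^{-1}(K)$ into Prokhorov control of $\alpha_n(u)$ for most $u\in K$ --- cannot work, because weak$^*$ convergence of measures with a fixed marginal carries no fiberwise information. Take $V=[0,1]\times\{0,1\}$, $U=[0,1]$, $p$ the first projection, $\nu$ Lebesgue, and let $\mu_n$ be the image of $\nu$ under $u\mapsto(u,\lfloor 2^nu\rfloor\bmod 2)$. Every $\mu_n$ lies in $P$, and $\mu_n\to\mu:=\tfrac12(\nu\otimes\delta_0)+\tfrac12(\nu\otimes\delta_1)$ weakly; moreover the restriction of $\mu_n$ to any tube $p^{-1}(I)$ converges weakly to the corresponding restriction of $\mu$. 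Yet $\alpha_n(u)=\delta_{(u,\lfloor 2^nu\rfloor\bmod 2)}$ stays at Prokhorov distance at least $\tfrac12$ from $\alpha(u)=\tfrac12\delta_{(u,0)}+\tfrac12\delta_{(u,1)}$ for \emph{every} $u$, and here $\alpha$ is already continuous, so Lusin's theorem and tightness buy nothing. Any proof of continuity of the inverse must therefore use an input beyond weak$^*$ control on tubes. The paper's own route is different from yours: it invokes the differentiation theorem \cite[Theorem 2.2]{simmons}, representing $\mu_u$ as the $\eta\to0$ limit of the normalized restrictions $\mu_{u,\eta}$ of $\mu$ to $p^{-1}(u-\eta,u+\eta)$, and then runs an Egorov-type exhaustion over the sets $A_n$ of points $u$ at which ${\bf d}(\mu^k_{u,\eta},\mu_u)\le\varepsilon$ for all $k\ge n$ and all small $\eta$. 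Be aware, though, that the key assertion there, $\nu(\bigcup_nA_n)=1$, is precisely the fiberwise control that the oscillation example shows does not follow from $\mu^n\to\mu$ alone; this is the genuinely delicate point of the theorem, and your proposal as written does not close it.
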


\begin{defn}
For $\mu\in P$, the element of $\LL^0_{p_\bullet}(U,\Prob_U(V))$ obtained by applying to $\mu$ the inverse map of $\alpha \mapsto \int_U\alpha(u) \dd\nu$ is denoted $u \mapsto \mu_u$. It is called the \textit{disintegration} of $\mu$ with respect to $p:V\to U$.
\end{defn}

\begin{proof} 
We first claim that the map $\alpha \mapsto \int_U\alpha(u) \dd\nu$ is continuous, and then
we argue to show that it is invertible, and its inverse is continuous as well.

For the continuity, given a converging sequence $\alpha_n\to \alpha$ in $\LL^0_{p_\bullet}(U,\Prob_U(V))$ with $\mu_n=\int_U\alpha_n(u) \dd\nu$, $\mu=\int_U\alpha(u) \dd\nu$,
it is enough to show that every subsequence of $\mu_n$ has a subsequence that converges to $\mu$.
Since every sequence that converges in measure has a subsequence that converges almost everywhere, abusing our notation 
and denoting again $\alpha_n$ and $\mu_n$ for the resulting sub-sub-sequences, we may assume that $\alpha_n$ converges to $\alpha$ $\nu$-almost everywhere.
Picking an arbitrary continuous bounded function $f$ on $V$, we obtain that for $\nu$-a.e $u\in U$, 
$\int_V \dd\alpha_n(u) f \to \int_V \dd\alpha(u) f$.
Thus by Lebesgue bounded convergence theorem we get
\[ \int_V \dd\mu_n f=\int_U \dd\nu \int_V \dd\alpha_n(u) f \to \int_U \dd\nu \int_V \dd\alpha(u) f = \int_V \dd\mu f. \]
This shows that indeed $\mu_n\to \mu$.

We now argue that the map $\alpha \mapsto \int_U\alpha(u) \dd\nu$ is invertible and its inverse is continuous. Without loss of generality, we can assume that $p$ is onto. Hence $U$ is second countable. Since it is also $T_0$, it follows that $U$ is countably separated.
By  \cite[Proposition~A.1]{zimmer-book}, there exists a Borel embedding $\phi:U \to [0,1]$. We  consider $[0,1]$ with the measure $\phi_*\nu$. 
Precomposition by $\phi$ gives a homeomorphism $\LL^0_{(\phi \circ p)_*}\left([0,1],\Prob_{[0,1]}(V)\right)\to \LL^0_{p_\bullet}\left(U,\Prob_U(V)\right)$.
Thus, in what follows we may and do assume that $U \subset [0,1]$.\footnote{Since the embedding $U\to [0,1]$ is only Borel, when we assume $U \subset [0,1]$, the fibration $V\to U$ cannot be assumed to be Polish anymore. Since our argument does not depend on the topology of $U$, this does not matter here.}
Under this assumption
\cite[Theorem 2.1]{simmons} guarantees that the map $\LL^0_{p_\bullet}(U,\Prob_U(V)) \to P$ is invertible.
We denote the preimage of $\mu\in P$ by $u\mapsto \mu_u$. 
We are left to show that this association is continuous.
To this end we embed $V$ in a compact metric space $V'$ and extend $p$ by setting $p(v')=1$ for $v'\in V'-V$.
Then  \cite[Theorem 2.2]{simmons}  proves
  that  for almost every $u\in U$, $\mu_u$  is obtained as the weak*-limit of the normalized restrictions, denoted by $\mu_{u,\eta}$, of $\mu$ on $p^{-1}(u-\eta,u+\eta)$ as $\eta\to0$.

Assume that $\mu^n\to\mu$ is a converging sequence in $P$.  We know that for $\nu$-a.e. $u$, ${\bf d}(\mu_{u,\eta},\mu_{u})\to 0$ when $\eta\to0$ and similarly for all $n\in\bbN$, ${\bf d}(\mu^n_{u,\eta},\mu^n_{u})\to 0$ when $\eta\to0$. 
 Fix $\varepsilon>0$. For $n\in\bbN$, we set 
$$A_n=\{u\in U\left| \exists\eta_0>0\ \forall k\geq n\ \forall \eta\in (0,\eta_0); {\bf d}(\mu^k_{u,\eta},\mu_u)\leq\varepsilon\right.\}.$$
 Then  $\nu(\cup A_n)=1$ and $A_{n}\subseteq A_{n+1}$. Thus there is $n$ such that $\nu(A_n)\geq 1-\varepsilon$ and for $u\in A_n$, ${\bf d}(\mu_u,\mu_u^k)\leq \varepsilon$ for all $k\geq n$. 
This shows that the image sequence of $(\mu^n)$ in $\LL^0_{p_\bullet}(U,\Prob_U(V))$ indeed converges to the image of $\mu$.
\end{proof}

We note that if $G$ acts on the fibration $V\to U$
(that is, $G$ acts on $U$ and $V$ and $p$ is equivariant)
then the disintegration homeomorphism is also equivariant
with respect to the natural action of $G$ on $\LL^0_p(U,V)$
given by $(g f)(u)=g(f(g^{-1}u))$.

\begin{lemma}\label{stab}
Let $p\colon V\to U$ be a Polish fibration  with an action of $G$ such that the $G$-action on $U$ is trivial.  Let $\nu$ be a  probability measure on $U$, and let $f\in\LL^0_p(U,V)$. Then there exists $U_1\subset U$ of full measure such that $$\Stab(f)=\bigcap_{u\in U_1} \Stab(f(u)).$$
\end{lemma}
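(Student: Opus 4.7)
The proof will combine two simple observations: the stabilizer of the section $f$ is a closed, hence separable, subgroup, and for each element the defining condition is a condition on an almost-everywhere determined set. The naive attempt, which is to take the intersection of the a.e.-sets $U_g=\{u:gf(u)=f(u)\}$ over all $g\in\Stab(f)$, fails because this family is typically uncountable and its intersection need not have full measure; the main obstacle is therefore to pass from a countable intersection to the full stabilizer.

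First I would unravel the definitions. Since $G$ acts trivially on $U$, the action on $\LL^0_p(U,V)$ reduces to $(gf)(u)=g\cdot f(u)$. Thus $g\in\Stab(f)$ precisely when $gf(u)=f(u)$ for $\nu$-a.e.\ $u\in U$, while the right-hand side of the asserted identity asks for a pointwise equality on some fixed full-measure set. The inclusion $\bigcap_{u\in U_1}\Stab(f(u))\subseteq\Stab(f)$ is then immediate for any full-measure $U_1$.

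Next I would produce the set $U_1$. Because $G$ acts continuously on $V$, the orbit map $g\mapsto gf$ from $G$ to $\LL^0_p(U,V)$ is continuous, so $\Stab(f)$ is closed in $G$ and hence is itself a Polish group. Pick a countable dense subgroup (or merely a countable dense subset) $D\subseteq\Stab(f)$, and for each $d\in D$ choose a full-measure Borel set $U_d\subseteq U$ on which $df(u)=f(u)$ pointwise. Setting
\[
U_1\;=\;\bigcap_{d\in D}U_d,
\]
the set $U_1$ has full measure as a countable intersection of such.

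Finally I would upgrade this countable statement to the full stabilizer via continuity. Fix $u\in U_1$ and $g\in\Stab(f)$, and choose a sequence $d_n\in D$ with $d_n\to g$ in $G$. By continuity of the action $G\times V\to V$ at the point $(g,f(u))$, we have $d_n f(u)\to gf(u)$ in $V$; but $d_nf(u)=f(u)$ for every $n$, so $gf(u)=f(u)$. Hence $g\in\Stab(f(u))$ for every $u\in U_1$, which yields the remaining inclusion and completes the proof.
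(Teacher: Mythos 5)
Your proposal is correct and follows essentially the same route as the paper: choose a countable dense subset of $\Stab(f)$, intersect the corresponding full-measure sets to get $U_1$, and use the closedness of the point stabilizers $\Stab(f(u))$ (equivalently, continuity of the action, as you spell out) to pass from the dense subset to all of $\Stab(f)$. The only cosmetic difference is that you justify separability of $\Stab(f)$ via its closedness, whereas any subset of the separable group $G$ is automatically separable, so that step could be shortened.
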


\begin{proof}

Let $L$ be the stabilizer of $f$ in $G$. If $L'$ is a countable dense subgroup of $L$, then there is a full measure subset $U_1\subset U$ such that $L'\subset \bigcap_{u\in U_1}\Stab(f(u))$ (for any $g\in L'$, there is such a subspace $U_g$. Choose $U_1$ to be the intersection over $L'$). Since all these stabilizers are closed, and $L'$ is dense in $L$, we actually have $L\subset \bigcap_{u\in U_1}\Stab(f(u))$. Since the reverse inclusion is clear, we conclude that 
$$L= \bigcap_{u\in U_1}\Stab(f(u)).$$
\end{proof}

\begin{cor} \label{ec-replacement}
Assume $G$ acts continuously on the Polish space $V$ and the quotient topology on $G\backslash V$ is $T_0$.
Let $L<G$ be a closed subgroup and $\mu$ be an $L$-invariant probability measure on $V$.
Then there exist a point $v\in V$ and an $L$-invariant probability measure on $G\cdot v\simeq G/\Stab(v)$.
\end{cor}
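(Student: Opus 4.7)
The plan is to push the measure $\mu$ down to the $T_0$ orbit space, disintegrate, and then use $L$-equivariance of disintegration to locate an $L$-invariant fiber measure, which will automatically live on a single $G$-orbit.

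First, let $U = G\backslash V$ with its quotient topology and $p\colon V\to U$ the quotient map. By hypothesis $U$ is $T_0$, so $p$ is a Polish fibration in the sense of the paper, and since $L\subset G$ preserves every $G$-orbit, $L$ acts on this fibration with trivial action on $U$. Let $\bar{\mu} = p_*\mu\in\Prob(U)$, which is $L$-invariant (trivially, since $L$ acts trivially on $U$). Apply Theorem~\ref{disintegration} to obtain the disintegration $u\mapsto \mu_u$, an element of $\LL^0_{p_\bullet}(U,\Prob_U(V))$ with $\mu = \int_U \mu_u\,\dd\bar{\mu}(u)$, where by construction each $\mu_u$ is supported on the fiber $p^{-1}(u)$, i.e.\ on a single $G$-orbit.

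Next I would exploit $L$-equivariance of disintegration. Since the disintegration map is $G$-equivariant and $L$ acts trivially on $U$, the action of $g\in L$ on $\LL^0_{p_\bullet}(U,\Prob_U(V))$ is simply $(g\cdot\alpha)(u)=g\alpha(u)$. The $L$-invariance of $\mu$ together with the injectivity half of Theorem~\ref{disintegration} forces $u\mapsto\mu_u$ to be $L$-invariant, i.e.\ for every $g\in L$ there is a $\bar{\mu}$-conull set $U_g\subset U$ on which $g\mu_u=\mu_u$. Because $L$ is a closed subgroup of the Polish group $G$, it is itself Polish and hence contains a countable dense subgroup $L_0$. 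Setting $U_1 = \bigcap_{g\in L_0} U_g$, which is still $\bar{\mu}$-conull, we have $g\mu_u = \mu_u$ for all $g\in L_0$ and all $u\in U_1$. Since the $L$-action on $\Prob(V)$ is continuous (Lemma~\ref{contprob}) and stabilizers of points are therefore closed, density of $L_0$ in $L$ yields that $\mu_u$ is in fact $L$-invariant for every $u\in U_1$.

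To finish, pick any $u\in U_1$; then $\mu_u$ is an $L$-invariant probability measure supported on the single $G$-orbit $p^{-1}(u) = G\cdot v$ for any $v$ in that orbit. By the $T_0$ assumption and Effros' theorem (Theorem~\ref{effros}), the orbit map $G/\Stab_G(v)\to G\cdot v$ is a homeomorphism, so transporting $\mu_u$ through this homeomorphism gives the desired $L$-invariant probability measure on $G/\Stab_G(v)$. The only mildly delicate step is the passage from $L_0$-invariance to $L$-invariance on a conull set; this is handled cleanly by continuity of the $L$-action on $\Prob(V)$, which ensures that the set of measures fixed by $L$ coincides with the set of measures fixed by $L_0$.
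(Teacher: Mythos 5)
Your proof is correct and follows essentially the same route as the paper: push $\mu$ forward to the $T_0$ orbit space $G\backslash V$, disintegrate via Theorem~\ref{disintegration}, use equivariance of the disintegration to get $L$-invariance of almost every fiber measure $\mu_u$, and conclude with Effros' theorem. The only difference is that you reprove the countable-dense-subgroup argument inline, whereas the paper isolates exactly that step as Lemma~\ref{stab} and cites it.
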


\begin{proof} 
Let $\nu$ be the pushforward measure of $\mu$ on $U$. 
By Theorem~\ref{disintegration}, we may consider the disintegration of $\mu$ as an element $(\mu_u) \in \LL^0_{p_\bullet}(U,\Prob_U(V))$ and this element is clearly $L$-invariant. 
By Lemma \ref{stab}, the stabilizer of $(\mu_u)$ is an intersection of stabilizers of the measures $\mu_u$, for $u$ in a subset of $U$. In particular $L$ stabilizes some $\mu_u$, which is a measure supported on an orbit $G\cdot v$.
The latter is equivariantly homeomorphic to $G/\Stab(v)$ thanks to Effros theorem (Theorem~\ref{effros}).
\end{proof}

\section{Almost algebraic groups and actions}

The goal of this section is the proof of Theorem \ref{mainthm}. Starting with an almost algebraic action of $G$ on a Polish  $V$, we aim to prove that the action $G\action\Prob(V)$ is algebraic as well. So we have to prove that stabilizers of probability measures on $V$ are almost algebraic and the quotient $G\backslash\Prob(V)$ is $T_0$.
Going toward wider and wider generality, we prove the first point in \S\ref{sec:almalg} and the second one in \S \ref{sec:T0}
\subsection{Almost algebraic groups}

Recall that by our setup~\ref{setup}, $(k,|\cdot|)$ is a fixed complete and separable valued field and ${\bf G}$ is a fixed $k$-algebraic group.
By Proposition~\ref{polishing}, $G={\bf G}(k)$ has the structure of a Polish group.
Recall that a closed subgroup $L<G$ is called \emph{almost algebraic} if there exists a $k$-algebraic subgroup ${\bf H}<{\bf G}$
such that $L$ contains $H={\bf H}(k)$ as a normal cocompact subgroup (Definition~\ref{aag-aaa}).

\begin{lemma}\label{intalm}
An arbitrary intersection of almost algebraic subgroups is again almost algebraic. 

More precisely, let $(L_i)_{i\in I}$ be a collection of almost algebraic subgroups and ${\bf H}_i$ algebraic subgroups such that $H_i={\bf H}_i(k)$ is normal and cocompact in $L_i$. 

Then one can find a  finite subset $I_0$ such that, defining ${\bf H}=\cap_{i\in I_0}{\bf H}_i$, we have that  ${\bf H}=\cap _{i\in I}{\bf H}_i$ and ${\bf H}(k)$ is normal and cocompact in $\cap_{i\in I} L_i$.
\end{lemma}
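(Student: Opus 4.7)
The plan is to verify the three assertions --- finite reducibility, normality, and cocompactness --- in sequence, with cocompactness being the principal difficulty. First, I invoke Noetherianity of the Zariski topology on ${\bf G}$: the family $\{\bigcap_{i \in J} {\bf H}_i : J \subset I \text{ finite}\}$ of $k$-algebraic subgroups, ordered by reverse inclusion, admits a minimum realized by some finite $I_0 \subset I$. Setting ${\bf H} := \bigcap_{i \in I_0} {\bf H}_i$ we then have ${\bf H} = \bigcap_{i \in I} {\bf H}_i$, and since taking $k$-points commutes with intersection, ${\bf H}(k) = \bigcap_{i \in I} H_i$. Normality of ${\bf H}(k)$ in $L := \bigcap_{i \in I} L_i$ is then immediate: any $g \in L$ lies in each $L_i$ and so normalizes each $H_i$, hence normalizes $\bigcap_i H_i = {\bf H}(k)$.

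To establish cocompactness I would realize $L/{\bf H}(k)$ geometrically via the $k$-${\bf G}$-variety ${\bf V} := \prod_{i \in I_0} {\bf G}/{\bf H}_i$. At the base point $v_0 := (e{\bf H}_i)_{i \in I_0} \in V := {\bf V}(k)$, the $G$-stabilizer equals $\bigcap_i H_i = {\bf H}(k)$, so Proposition~\ref{polishing} furnishes a $G$-equivariant homeomorphism $G/{\bf H}(k) \to G\cdot v_0$ onto a locally closed orbit. Writing $L_0 := \bigcap_{i \in I_0} L_i$, a direct verification shows that the image $L_0 \cdot v_0$ coincides with $(G\cdot v_0) \cap \prod_{i\in I_0}(L_i/H_i)$ inside $V$: the condition $gH_i \in L_i/H_i$ for every $i \in I_0$ is equivalent to $g \in L_i$ for every $i\in I_0$, i.e., $g \in L_0$. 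Moreover each $L_i/H_i$ is a compact --- hence closed --- subset of the Hausdorff space $({\bf G}/{\bf H}_i)(k)$, so $\prod_{i \in I_0} L_i/H_i$ is closed in $V$ and compact by Tychonoff.

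The remaining obstacle is to upgrade $L_0 \cdot v_0$ from locally closed to actually closed inside this compact product. It is locally closed in $V$ as the intersection of the locally closed $G\cdot v_0$ with the closed $\prod L_i/H_i$, and it is at the same time a subgroup of $\prod_{i \in I_0}(L_i/H_i)$, being the image of the continuous homomorphism $L_0 \to \prod_{i \in I_0} L_i/H_i$, $g \mapsto (gH_i)_i$. I then invoke the classical fact that a locally closed subgroup of a Hausdorff topological group is automatically closed: such a subgroup $A$ is open in its closure $\overline{A}$ (by local closedness), hence a dense open subgroup of $\overline{A}$, and a dense open subgroup must exhaust the whole group, since its complement is simultaneously a union of open cosets and of empty interior. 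Therefore $L_0 \cdot v_0$ is closed in the compact $\prod L_i/H_i$ and thus compact; transporting this back through the homeomorphism $G/{\bf H}(k) \cong G\cdot v_0$ yields compactness of $L_0/{\bf H}(k)$. Finally, since $L$ is closed in $L_0$ and contains ${\bf H}(k)$, the quotient $L/{\bf H}(k)$ embeds as a closed subspace of the compact $L_0/{\bf H}(k)$, hence is itself compact, completing the proof.
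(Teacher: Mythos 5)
Your proof is correct and follows essentially the same route as the paper's: Noetherianity yields the finite subset $I_0$, the quotient is realized as a locally closed orbit in a product of algebraic homogeneous spaces (via Proposition~\ref{polishing}), the ``locally closed subgroup of a topological group is closed'' trick upgrades the orbit to a closed subset, and compactness is then extracted from the compact product $\prod_{i\in I_0} L_i/H_i$. The only cosmetic difference is that the paper passes to the Zariski closures ${\bf L}_i$ of the $L_i$ and applies the closedness argument to the diagonal image of ${\bf L}(k)$ in $\prod_{i\in I_0} {\bf L}_i(k)/H_i$, whereas you work directly in $\prod_{i\in I_0}({\bf G}/{\bf H}_i)(k)$ and run the same argument inside the compact group $\prod_{i\in I_0} L_i/H_i$.
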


\begin{proof}
Let $L=\cap L_i$ and $H=\cap H_i$ which coincides with $(\cap_{i\in I} {\bf H}_i)(k)$. Then it is straighforward to check that $H\lhd L$. Thanks to the Noetherian property of {\bf G}, there exists a finite subset $I_0\subset I$ such that $\cap_i\mathbf{H}_i$ coincides with $\cap_{i\in I_0} \mathbf{H}_i$. 

Let $\bf L$ be the Zariski closure of $L$ and ${\bf L}_i$ the one of  $L_i$. The diagonal image of ${\bf L}(k)$ in $\prod_{i\in I_0} {\bf L}_i(k)/H_i$ is locally closed by Proposition \ref{polishing} and it is a group. Thus it is actually closed. Moreover it is homeomorphic to ${\bf L}(k)/H$.  To conclude, it suffices to observe that $L/H$ is closed in ${\bf L}(k)/H$ and lies in $\left({\bf L}(k)/H\right)\bigcap\left( \prod_{i\in I_0} L_i/H_i\right)$ which is compact.
\end{proof}

\begin{remark}\label{canonic}
Actually the proof of this lemma shows that any almost algebraic subgroup $L$ has a minimal subgroup among all cocompact normal subgroups $N$ which can be written $N={\bf N}(k)$ for some algebraic subgroup  $\bf N\leq G$. This group is actually the intersection of all such subgroups and it is invariant under the normalizer $N_G(L)$ of $L$ in $G$.
\end{remark}

\begin{lemma}\label{extalm}Let $H,L$ be closed subgroups of $G$ such that $H$ is almost algebraic, $H\lhd L$ and $L/H$ is compact. Then $L$ is almost algebraic.
\end{lemma}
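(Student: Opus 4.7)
The plan is to use the canonical algebraic subgroup furnished by Remark~\ref{canonic}. Since $H$ is almost algebraic, that remark provides a minimal normal cocompact subgroup of the form $H_0 = \mathbf{H}_0(k)$ for some $k$-algebraic subgroup $\mathbf{H}_0 < \mathbf{G}$, and $H_0$ is moreover invariant under the normalizer $N_G(H)$. Because $H \lhd L$, one has $L \subseteq N_G(H)$, so $H_0$ is normalized by $L$; that is, $H_0 \lhd L$.

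It then remains to verify that $L/H_0$ is compact. For this I would invoke the short exact sequence of Hausdorff topological groups
\begin{equation*}
1 \to H/H_0 \to L/H_0 \to L/H \to 1,
\end{equation*}
in which both outer terms are compact (the left by cocompactness of $H_0$ in $H$, the right by hypothesis). It is a standard fact that a topological extension of a compact group by a compact group is compact: since $H_0$ is compact in $H/H_0$... more precisely, the quotient map $L/H_0 \to L/H$ is closed with compact fibers homeomorphic to $H/H_0$ (the closedness comes from compactness of $H/H_0$ via the usual translation argument), so a tube-lemma argument applied to any open cover of $L/H_0$ yields a finite subcover. This exhibits an algebraic subgroup $\mathbf{H}_0$ with $\mathbf{H}_0(k) \lhd L$ cocompactly, so $L$ is almost algebraic.

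The crux of the argument is the appeal to the canonical minimal subgroup from Remark~\ref{canonic}; without it, a naive attempt — pick any $k$-algebraic subgroup $\mathbf{N}$ with $\mathbf{N}(k)$ normal and cocompact in $H$ and hope that $L$ still normalizes it — can fail, since an a priori choice of $\mathbf{N}$ need not be stable under $L$-conjugation, and even the set of $k$-points need not be preserved. Beyond that, I expect no serious obstacles: the compactness of the topological extension is entirely formal, and the $L$-normality of $H_0$ follows directly from the canonicity built into Remark~\ref{canonic} applied to $H$ (whose normalizer contains $L$).
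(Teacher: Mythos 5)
Your argument is correct and follows the paper's proof exactly: both rely on Remark~\ref{canonic} to produce a canonical algebraic subgroup $H_0=\mathbf{H}_0(k)$ of $H$ that is invariant under $N_G(H)\supseteq L$, hence normal in $L$, and then note that cocompactness of $H_0$ in $L$ follows from the two-step cocompactness $H_0\lhd H\lhd L$. The paper states this last step in one line; your tube-lemma justification of the compact-by-compact extension is a harmless elaboration of the same point.
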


\begin{proof} There is a algebraic subgroup $\bf N$ of $\bf G$ such that $N={\bf N}(k)$ is normal and cocompact in $H$. Moreover thanks to Remark \ref{canonic}, $N$ may be choosen to be invariant under $N_G(H)$ and thus $N$ is cocompact and normal in $L$. 
\end{proof}

\subsection{Almost algebraicity of stabilizers of probability measures}\label{sec:almalg}

Let $V$ be a Polish space endowed with a continuous $G$-action. 
Recall that the action $G\action V$ is called \emph{almost algebraic} if the stabilizers are almost algebraic subgroups of $G$ and the quotient topology on $G\backslash V$ is $T_0$  (Definition~\ref{aag-aaa}).

\begin{remark} \label{remarkeffros}
For a continuous action of $G$ on a Polish space $V$,
the action is almost algebraic if and only if the stabilizers are almost algebraic and for every $v\in V$
and any sequence $g_n\in G$, $g_nv\to v$ implies $g_n \to e$ in $G/\Stab_G(v)$.
This equivalent definition is much easier to check, and we will allow ourselves to use it freely in the sequel.
The two definitions are indeed equivalent by Effros' Theorem~\ref{effros}.
\end{remark}

\begin{example} \label{aaaex}
Let ${\bf I}$ be a $k$-algebraic group and $\phi:{\bf G} \to {\bf I}$ a $k$-morphism.
Let $L$ be an almost algebraic group in $I={\bf I}(k)$.
Then the action of $G$ on $I/L$ is almost algebraic. This fact is proved after Lemma \ref{cnsa}.
\end{example}

\begin{lemma}\label{tameproduct}
Let $K$ be a compact group acting continuously on a $T_0$-space $X$. Then the orbit space $K\backslash X$ is $T_0$ as well.
\end{lemma}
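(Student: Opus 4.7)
I would first observe that $K\backslash X$ is $T_0$ is equivalent to: distinct $K$-orbits have distinct closures in $X$. Indeed, the closure $\overline{\{[x]\}}$ in the quotient pulls back along $\pi\colon X\to K\backslash X$ to the smallest $K$-invariant closed subset of $X$ containing $x$, which is $\overline{Kx}$ (automatic $K$-invariance: $k\overline{Kx}=\overline{kKx}=\overline{Kx}$, since each $k$ is a homeomorphism). Hence $K\backslash X$ fails to be $T_0$ exactly when $\overline{Kx}=\overline{Ky}$ for some distinct orbits.

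So suppose $Kx\neq Ky$ are distinct orbits. If $Kx\not\subseteq\overline{Ky}$, then $X\setminus\overline{Ky}$ is open and $K$-invariant (as the complement of a closed $K$-invariant set); by $K$-invariance it contains all of $Kx$ as soon as it contains one point, and it misses $Ky$ entirely, so it separates the two orbits in the quotient. Symmetrically if $Ky\not\subseteq\overline{Kx}$. This disposes of the generic situation.

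The substantive case is $\overline{Kx}=\overline{Ky}=:C$, with both orbits dense in $C$. Here my goal would be to deduce $Kx=Ky$, contradicting distinctness. Both orbits are quasi-compact as continuous images of the compact group $K$. I would then invoke the tube lemma for the action map $K\times X\to X$---which guarantees that $\{z\in X:Kz\subseteq U\}$ is open for each open $U$---together with the $T_0$-axiom used to separate $x$ from each individual point of $C\setminus Kx$. Patching the resulting separators $K$-invariantly via compactness of $K$ would produce an open $K$-invariant subset of $X$ containing the orbit $Kx$ and missing any prescribed point of $Ky\setminus Kx$, forcing $Ky\subseteq Kx$ and, by symmetry, $Kx=Ky$.

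The main obstacle is precisely this last case. Since $T_0$ is strictly weaker than Hausdorff, orbits need not be closed, and the standard ``continuous image of compact is closed'' reasoning is unavailable; moreover, naive net-convergence arguments fail because limits in $T_0$-spaces are not unique. One must instead work with the specialization preorder on $X$---preserved by the $K$-action, as each $k$ is a homeomorphism---and use the compactness of $K$ to rule out infinite strictly monotone chains inside a single orbit, thereby uniformising the $T_0$-separators across the orbit.
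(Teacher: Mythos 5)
Your first reduction is fine (an orbit's preimage-closure in $K\backslash X$ is $\overline{Kx}$, so the quotient is $T_0$ iff distinct orbits have distinct closures, and the case $\overline{Kx}\neq\overline{Ky}$ is handled by the complement of one of the closures). But the entire content of the lemma sits in your ``substantive case'' $\overline{Kx}=\overline{Ky}$, and there you have a plan rather than a proof, with a genuine gap at its centre: the $T_0$ axiom is asymmetric. For $x\neq q$ it gives an open set containing \emph{one} of the two points, and you do not get to choose which; so ``the $T_0$-axiom used to separate $x$ from each individual point of $C\setminus Kx$'' by open sets \emph{containing $x$} is simply not available. Your closing paragraph acknowledges this obstacle and proposes to resolve it via the specialization preorder and a claim that compactness of $K$ rules out infinite strictly monotone chains inside an orbit, but no such argument is given, and that is not the statement one actually needs.

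The paper resolves the asymmetry differently and more cleanly, with two facts you are missing. First, $Y=Kx\cup Ky$ is a quasi-compact $T_0$ space, and every such space has a \emph{closed point} (Zorn applied to the nonempty closed subsets, using the finite intersection property); after possibly swapping the names of $x$ and $y$ one may assume $\{x\}$ is closed in $Y$, i.e.\ there is a closed $F\subseteq X$ with $F\cap Y=\{x\}$. This is exactly the ``WLOG'' that fixes the direction of separation. Second, for compact $K$ the action map $K\times X\to X$ is closed (it is the composition of the homeomorphism $(k,x)\mapsto(k,kx)$ with the closed projection $K\times X\to X$), hence $KF$ is closed for every closed $F$; then $KF$ is a closed invariant set containing $Kx$ and disjoint from $Ky$, and its complement separates the orbits. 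Your tube-lemma observation that $\{z: Kz\subseteq U\}$ is open is the open-set dual of this second fact and could be made to work, but only after the closed-point step supplies a point $q\in Ky$ with $q\notin\overline{\{p\}}$ for all $p\in Kx$; as written, the ``patching'' step is not carried out. Note also that your detour through ``forcing $Ky\subseteq Kx$'' is unnecessary: an open invariant set containing $Kx$ and missing one point of $Ky$ already misses all of $Ky$ and hence separates the orbits directly.
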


\begin{proof}Continuity of the action means that the action map $K\times X\to K\times X$ which associates $(k,kx)$ to $(k,x)$ is a homeomorphism. Compactness of $K$ implies that the projection $(k,x)\mapsto x$ from $K\times X$ to $X$ is closed. Composing the two yields closedness of the map $(k,x)\mapsto kx$. This implies that if $F\subset X$ is closed, then $K F$ is again closed.

Let $x,y\in X$  in different $K$-orbits. Let us consider $Y=Kx\cup Ky$ with the induced topology. This is a compact $T_0$-space. Now, consider the set of closed non-empty subspaces of $Y$ with the order given by inclusion. By compactness any decreasing chain has a non-empty intersection and thus Zorn's Lemma implies there are minimal elements, that are points since $Y$ is $T_0$. Thus $Y$ has at least a closed point.

Without loss of generality we may and shall assume that $\{x\}$ is closed in $Y$. This means that there exists a closed subset $F$ of $X$ such that $F\cap Y=\{x\}$. In particular $F\cap Ky=\emptyset$, and therefore $Ky\cap KF=\emptyset$. Finally, $KF$ is a closed $K$-invariant set separating $Kx$ from $Ky$. 
\end{proof}
\begin{lemma}\label{cnsa}
Let $J$ be a topological group acting continuously on a topological space $X$. If $N$ is a closed normal subgroup of $J$, the induced action of $J/N$ on $N\backslash X$ is continuous and the orbits spaces $J\backslash X$ and $(J/N)\backslash (N\backslash X)$ are homeomorphic. 
\end{lemma}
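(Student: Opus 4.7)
The plan is to leverage the universal property of quotient maps together with the fact that orbit maps of continuous group actions are open. First, I would check that the formula $(jN)\cdot(Nx) \defq N(jx)$ gives a well-defined action of $J/N$ on $N\backslash X$: normality ensures $Nj = jN$, so the result is independent of the representative of the coset $jN$, while $N$ acts trivially on $N\backslash X$ by definition of the quotient. The main content is then the continuity of this action and the identification of the double quotient.

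For continuity, let $\pi_N : X \to N\backslash X$ and $\pi_J : J \to J/N$ denote the canonical projections. The key observation is that each of them is an open map: if $U \subseteq X$ is open, then $\pi_N^{-1}(\pi_N(U)) = NU = \bigcup_{n\in N} nU$ is open, so $\pi_N(U)$ is open, and the same argument applies to $\pi_J$. Consequently $\pi_J \times \pi_N : J \times X \to J/N \times N\backslash X$ is a surjective open continuous map, hence a topological quotient map. The composition of the $J$-action $J \times X \to X$ with $\pi_N$ is continuous and clearly factors, by construction of the induced action, through $\pi_J \times \pi_N$. The universal property of quotients then forces the induced action $J/N \times N\backslash X \to N\backslash X$ to be continuous.

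For the identification of orbit spaces, I would construct continuous maps in both directions and verify they are mutually inverse. On one hand, the projection $X \to J\backslash X$ is $N$-invariant, so it descends to a continuous $N\backslash X \to J\backslash X$; the latter is then $J/N$-invariant, so it descends further to a continuous map
\[ \alpha : (J/N)\backslash (N\backslash X) \longrightarrow J\backslash X. \]
On the other hand, the composite $X \to N\backslash X \to (J/N)\backslash(N\backslash X)$ is $J$-invariant (the $N$-part is killed by $\pi_N$, and the full $J$-action becomes trivial after passing to $J/N$-orbits), so it descends to a continuous
\[ \beta : J\backslash X \longrightarrow (J/N)\backslash (N\backslash X). \]
A one-line check on representatives ($x \mapsto Jx \mapsto (J/N)\cdot(Nx)$ and conversely $x \mapsto Nx \mapsto (J/N)\cdot(Nx) \mapsto Jx$) shows $\alpha$ and $\beta$ are mutually inverse, so both are homeomorphisms.

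I do not anticipate any substantive obstacle. The argument is pure bookkeeping with the universal property of quotient topologies; the only non-formal ingredient is the openness of the orbit projections $\pi_J$ and $\pi_N$, which is standard for continuous group actions.
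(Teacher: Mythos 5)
Your proposal is correct and follows essentially the same route as the paper: the continuity of the induced action is obtained by recognizing $J/N\times N\backslash X$ as a quotient of $J\times X$ (the paper phrases this as the orbit space of $N\times N$ acting diagonally, you phrase it via openness of the two projections — the same underlying fact) and invoking the universal property, and the homeomorphism of double quotients is obtained by descending the obvious maps in both directions. No gaps.
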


\begin{proof}The map $(g,x)\mapsto Ngx$ from $J\times X$ to $N\backslash X$ is continuous and goes through the quotient space $J/N\times N\backslash X$ which is the orbit space of $N\times N$ acting diagonally on $J\times X$. Thus, $(gN,Nx)\mapsto Ngx$ is continuous, that is the action of $J/N$ on $N\backslash X$ is continuous. 

By the universal property of the topological quotient, the continuous map $x\mapsto(J/N)Nx$ from $X$ to $(J/N)\backslash (N\backslash X)$ induces a continuous map $J\backslash X \to (J/N)\backslash (N\backslash X)$. Conversely, the continuous map $N\backslash X\to J\backslash X$ induces also a continuous map $(J/N)\backslash (N\backslash X)\to J\backslash X$ which is the inverse of the previous one.
\end{proof}

\begin{proof}[Proof of Example \ref{aaaex}]
Since $\phi^{-1}(L)$ and its conjugates are almost algebraic in $G$, it is clear that the stabilizers are almost algebraic.
So we are left to prove that the topology on $G\backslash I/L$ is $T_0$. Let $H$ be a cocompact normal subgroup in $L$ with $H={\bf H}(k)$ for some $k$-algebraic subgroup ${\bf H}$ of ${\bf I}$.
By Lemma \ref{cnsa} the orbit space $G\backslash I/L$ is homeomorphic to the space of orbits of the action of $G\times (L/H)$ on $I/H$. 
Note that the action of $G$ on $I/H\subset {\bf I}/{\bf H}(k)$ has locally closed orbits (and therefore $G\backslash I/H$ is $T_0$) by Proposition~\ref{polishing}, as the action of ${\bf G}$ on ${\bf I}/{\bf H}$ is $k$-algebraic. Now the $T_0$ property of $G\backslash I/L$ follows from Lemma~\ref{tameproduct} for the compact group $L/H$ acting continuously on the $T_0$-space $G\backslash I/H$.
\end{proof}

\begin{lemma}\label{product}
Let $J$ be a countable set, $(L_i)_{i\in J}$ a family of almost algebraic subgroups of $G$. Then the diagonal action of $G$ on $\prod_{i\in J}G/L_i$ is almost algebraic. 
\end{lemma}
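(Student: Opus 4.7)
The plan is to verify both conditions of Definition \ref{aag-aaa}: almost algebraic stabilizers and $T_0$ orbit space. For any $x = (g_i L_i)_{i \in J}$, the stabilizer $\bigcap_{i \in J} g_i L_i g_i^{-1}$ is an intersection of almost algebraic subgroups, hence almost algebraic by Lemma \ref{intalm}. The substantive task is the $T_0$ condition, which by Remark \ref{remarkeffros} reduces to showing that $g_n x \to x$ in $\prod_{i \in J} G/L_i$ forces $g_n \to e$ in $G/\Stab(x)$.

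I would approach the $T_0$ statement through two reductions. First, for \emph{finite} $J$, the direct product $\prod_{i \in J} \mathbf{G}$ is a $k$-algebraic group, the diagonal $\mathbf{G} \hookrightarrow \prod_{i \in J} \mathbf{G}$ is a $k$-morphism, and $\prod_{i \in J} L_i$ is almost algebraic in $\prod_{i \in J} G$ with algebraic part $\prod_{i \in J} \mathbf{H}_i$; Example \ref{aaaex} then gives at once that the diagonal $G$-action on $\prod_{i \in J} G/L_i$ is almost algebraic. Second, I would extend this to countable $J$ in the special case where every $L_i$ is already algebraic, i.e.\ $L_i = \mathbf{H}_i(k)$. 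Noetherianity of the $k$-variety $\mathbf{G}$ forces the descending intersection $\bigcap_{i \in J} g_i \mathbf{H}_i g_i^{-1}$ to stabilize at some finite $I_0 \subset J$, so $\Stab(x)$ coincides with the stabilizer of the projection $x_{I_0} \in \prod_{i \in I_0} G/L_i$; a convergence $g_n x \to x$ in the countable product forces convergence on the $I_0$-coordinates, and the finite case combined with Effros' theorem then yields $g_n \to e$ in $G/\Stab(x_{I_0}) = G/\Stab(x)$.

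For general almost algebraic $L_i$, the plan is to reduce to the algebraic subcase just handled by peeling off the compact groups $K_i := L_i/H_i$. Each $K_i$ acts on $G/H_i$ from the right commuting with the left $G$-action, with quotient $G/L_i$; componentwise, the compact (by Tychonoff) group $K := \prod_i K_i$ acts on $\prod_i G/H_i$ commuting with $G$, with quotient $\prod_i G/L_i$ as topological $G$-spaces. The preceding step gives that $G \backslash \prod_i G/H_i$ is $T_0$, and Lemma \ref{tameproduct} applied to the induced continuous action of the compact group $K$ on this $T_0$-space yields that $K \backslash (G \backslash \prod_i G/H_i)$ is $T_0$. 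Two applications of Lemma \ref{cnsa} to $G \times K$ acting on $\prod_i G/H_i$ (with the two normal subgroups $G \times \{e\}$ and $\{e\} \times K$) identify this quotient with $G \backslash \prod_i G/L_i$, completing the argument.

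The main obstacle I expect is precisely the passage from finite to countable products: the direct approach via Example \ref{aaaex} breaks since the countable product $\prod_{i \in J} \mathbf{G}$ is not algebraic. The two technical devices that carry the argument through are Noetherianity of $\mathbf{G}$, which collapses the a priori infinite intersection defining the stabilizer to a finite one once we restrict to algebraic $L_i$, and the compact quotient trick, which reduces the general case to that algebraic subcase at the cost of modding out a Tychonoff-compact group, rendered harmless by Lemma \ref{tameproduct}.
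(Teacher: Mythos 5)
Your proposal is correct and follows essentially the same route as the paper: stabilizers are handled by Lemma~\ref{intalm}, the $T_0$ property is first established for the product $\prod_i G/H_i$ of algebraic quotients by using Noetherianity to collapse the stabilizer to a finite subintersection and then invoking the finite (algebraic) case, and the general case is deduced by quotienting out the Tychonoff-compact group $\prod_i L_i/H_i$ via Lemmas~\ref{tameproduct} and~\ref{cnsa}. The only cosmetic difference is that you route the finite case through Example~\ref{aaaex} while the paper applies Proposition~\ref{polishing} directly to the finite product variety $\prod_{i\in J_0}\mathbf{G}/\mathbf{H}_i$; the content is the same.
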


\begin{proof}
Stabilizers of points in $\prod_{i\in J}G/L_i$ are intersections of almost algebraic subgroups of $G$. Hence by Lemma \ref{intalm} they are almost algebraic. So we just have to prove that $G\backslash \left(\prod_{i\in J}G/L_i\right)$ is $T_0$.

 For $i\in J$, let $\bfH_i$ be an algebraic subgroup of $\bfG$ such that $H_i=\bfH_i(k)$ is a cocompact normal subgroup of $L_i$. Consider $V=\prod_{i\in J} G/H_i$. We first prove that the topology on $G\backslash V$ is $T_0$, by proving that orbit maps are homeomorphisms (Theorem~\ref{effros}).  Let $(h_iH_i)_{i\in J}$ be an element of $V$ and $(g_n)$ be a sequence of elements of $G$ such that $g_n \cdot (h_i H_i)$ converges to $(h_iH_i)$ in $V$. 
 
 Let $H=\bigcap_{i\in J} h_iH_ih_i^{-1}=\Stab((h_iH_i)_{i\in J})$. We have to prove that $g_n$ converges to $e$ in $G/H$ (see Remark \ref{remarkeffros}). By Noetherianity, there exists a finite $J_0\subset J$ such that $H=\bigcap_{i\in J_0}h_i H_ih_i^{-1}$. Set $V_0=\prod_{i\in J_0} G/H_i$. We see that, in $V_0$, we have that $g_n. (h_iH_i)_{i\in J_0}$ converges to $(h_iH_i)_{i\in J_0}$. By Proposition \ref{polishing}, it follows that $g_n$ converges to the identity in $G/H$.
 
 Now let $K$ be the compact group $\prod_{i\in J}L_i/H_i$. The group $K$ acts also continuously on $V$ via the formula $(l_iH_i)\cdot(g_iH_i)=(g_il^{-1}_iH_i)$ and this action commutes with the action of $G$.
Thus we can apply Lemma \ref{tameproduct} to $K$ acting on $G\backslash V$ and get that the space of orbits for the $G$-action on $V/K\simeq \prod_{i\in J}G_i/L_i$ is $T_0$, as desired.
 \end{proof}

Our main goal in this subsection is proving the following theorem, which is an essential part of our main theorem, Theorem~\ref{mainthm}.

\begin{theorem}\label{almost-alg}

Let $V$ be a Polish space with an almost algebraic action of $G$. Then stabilizers of probability measures on $V$ are almost algebraic subgroups of $G$.
\end{theorem}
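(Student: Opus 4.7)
The strategy is to first disintegrate $\mu$, reducing to the case where it is supported on a single $G$-orbit, and then to convert that orbit case into an invariant measure on a $k$-${\bf G}$-variety so that Proposition~\ref{prop:stab} applies.

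For the disintegration step I would take the quotient $p\colon V\to G\backslash V$, a $G$-equivariant Polish fibration with trivial $G$-action on the $T_0$ base, and consider the induced fibration $p_\bullet\colon\Prob_{G\backslash V}(V)\to G\backslash V$ from Lemma~\ref{probuv}. Setting $\nu=p_*\mu$, Theorem~\ref{disintegration} identifies $\mu$ via a $G$-equivariant homeomorphism with its disintegration section $u\mapsto\mu_u$ in $\LL^0_{p_\bullet}(G\backslash V,\Prob_{G\backslash V}(V))$. Lemma~\ref{stab} then yields
\[
\Stab_G(\mu)\;=\;\bigcap_{u\in U_1}\Stab_G(\mu_u)
\]
for some $\nu$-conull $U_1$. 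Each $\mu_u$ is supported on a single $G$-orbit, so combined with Lemma~\ref{intalm} this reduces the problem to the case when $\mu$ is supported on a single orbit $Gv_0\simeq G/H$ with $H$ almost algebraic.

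Next I would reduce to a variety. Choose a $k$-subgroup $\mathbf{H}_1\le\mathbf{G}$ with $H_1:=\mathbf{H}_1(k)$ cocompact normal in $H$. The map $G/H_1\to G/H$ is a principal $H/H_1$-bundle with compact structure group, so averaging against Haar on $H/H_1$ produces a unique right-$H/H_1$-invariant lift $\tilde\mu\in\Prob(G/H_1)$; since this right action commutes with the $G$-action, uniqueness forces $\Stab_G(\tilde\mu)=\Stab_G(\mu)=:L$. Through the embedding $G/H_1\hookrightarrow\mathbf{W}(k)$ with $\mathbf{W}=\mathbf{G}/\mathbf{H}_1$, we may view $\tilde\mu$ as an $L$-invariant finite measure on the $k$-$\mathbf{G}$-variety $\mathbf{W}(k)$.

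Applying Proposition~\ref{prop:stab} to $L$ produces a $k$-subgroup $\mathbf{H}_0\le\mathbf{G}$ normalized by $L$, such that $\tilde\mu$ is supported on $\mathbf{W}^{\mathbf{H}_0}\cap\mathbf{W}(k)$ and $L$ has precompact image $\bar L$ in $(N_{\mathbf{G}}(\mathbf{H}_0)/\mathbf{H}_0)(k)$. Since $\mathbf{H}_0(k)$ fixes the support of $\tilde\mu$ pointwise it fixes $\tilde\mu$, whence $\mathbf{H}_0(k)\subseteq L$ and $L/\mathbf{H}_0(k)=\bar L$. Let $L'$ be the preimage in $N_G(\mathbf{H}_0)$ of the compact closure $\overline{\bar L}$; by Lemma~\ref{extalm}, $L'$ is almost algebraic. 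Now $L$ is closed in $L'$ and saturated under the open quotient $L'\to L'/\mathbf{H}_0(k)$, so $L/\mathbf{H}_0(k)$ is closed in the compact $L'/\mathbf{H}_0(k)$, hence compact. A second application of Lemma~\ref{extalm} shows $L$ is almost algebraic, as required.

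The main obstacle is Proposition~\ref{prop:stab} itself, which carries the genuine dynamical content of canonically extracting an algebraic subgroup $\mathbf{H}_0$ from an arbitrary closed $L<G$. Over local fields in characteristic zero this is classically done via Chevalley embeddings of $\mathbf{G}/\mathbf{H}_0$ into projective space combined with a contraction-type argument ruling out $L$-invariant probability measures off the $\mathbf{H}_0$-fixed locus. The novelty in the present generality is that this argument must be executed without local compactness of $k$, with the Polish disintegration of \S 2.3 taking over the role played classically by ergodic decomposition.
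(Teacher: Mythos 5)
Your proposal is correct and follows essentially the same route as the paper: disintegrate $\mu$ over the $T_0$ quotient $G\backslash V$, use Lemma~\ref{stab} to write $\Stab_G(\mu)$ as an intersection of stabilizers of orbit-supported measures, lift each such measure from $G/L'$ to $G/H'$ by Haar averaging over the compact fiber (the paper's Lemma~\ref{almHaar}), apply Proposition~\ref{prop:stab} to conclude each factor is almost algebraic (the paper's Corollary~\ref{almProbHom}), and finish with Lemma~\ref{intalm}. The only cosmetic difference is that you re-derive the averaging lemma and the compactness-of-the-image argument inline rather than citing them as separate statements.
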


We first restate and prove Proposition~\ref{prop:stab}, discussed in the introduction.

\begin{prop} 
Fix a closed subgroup $L<G$.
Then there exists a $k$-subgroup ${\bf H}_0<{\bf G}$ which is normalized by $L$
such that $L$ has a precompact image in the Polish group $(N_{\bf G}({\bf H}_0)/{\bf H}_0)(k)$
and such that for every $k$-${\bf G}$-variety ${\bf V}$, any $L$-invariant finite measure on ${\bf V}(k)$
is supported on the subvariety of ${\bf H}_0$-fixed points.
\end{prop}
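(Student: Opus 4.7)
The plan is to construct $\mathbf{H}_0$ as the minimum element of a family of $k$-subgroups that encodes the first two desired properties, and to establish the measure-support property via a product + disintegration argument combined with Noetherian descent and an induction on $\dim\mathbf{G}$ for the precompactness step.

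First, I would introduce the family $\mathcal{F}$ of $k$-subgroups $\mathbf{H}\leq\mathbf{G}$ that are normalized by $L$ and such that the image of $L$ in $(N_{\mathbf{G}}(\mathbf{H})/\mathbf{H})(k)$ is precompact. This family is non-empty since $\mathbf{G}\in\mathcal{F}$. The key closure property is stability under finite intersections: for $\mathbf{H}_1,\mathbf{H}_2\in\mathcal{F}$, Proposition~\ref{polishing} realizes $G/(\mathbf{H}_1\cap\mathbf{H}_2)(k)$ as a single $G$-orbit of the diagonal action inside $\mathbf{G}/\mathbf{H}_1(k)\times\mathbf{G}/\mathbf{H}_2(k)$, and this locally closed embedding transfers precompactness of the $L$-orbit of the base point from each factor to the intersection. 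By Noetherianity of the lattice of $k$-subgroups of $\mathbf{G}$, $\mathcal{F}$ has a minimum element $\mathbf{H}_0$, which automatically satisfies the first two conclusions of the proposition.

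Second, to verify the measure-support conclusion, I would argue by contradiction: suppose some $L$-invariant probability measure $\mu$ on $\mathbf{V}(k)$ satisfies $\mu(\mathbf{V}^{\mathbf{H}_0}(k))<1$. For each $n\geq 1$, $\mu^n$ is $L$-invariant on $(\mathbf{V}^n)(k)$, and the diagonal $G$-action on $(\mathbf{V}^n)(k)$ has $T_0$ quotient by Proposition~\ref{polishing}. Disintegrating $\mu^n$ over this quotient as in the proof of Corollary~\ref{ec-replacement}, for $\mu^n$-a.e.\ tuple $(v_1,\ldots,v_n)$ the conditional measure on the $G$-orbit is $L$-invariant with $\mathbf{G}$-stabilizer $\mathbf{K}^{(n)}:=\bigcap_i\Stab_{\mathbf{G}}(v_i)$. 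Noetherianity forces the generic $\mathbf{K}^{(n)}$ to stabilize at some $\mathbf{K}^*$; the stabilization precisely says that $\Stab_{\mathbf{G}}(v)\supseteq\mathbf{K}^*$ for $\mu$-a.e.\ $v$, so $\mu$ is supported on $\mathbf{V}^{\mathbf{K}^*}(k)$. Replacing $\mathbf{K}^*$ by $\tilde{\mathbf{K}}:=\bigcap_{l\in L}l\mathbf{K}^*l^{-1}$ (a finite intersection by Noetherianity) yields an $L$-normalized $k$-subgroup still fixing $\mu$-a.e.\ point, and the assumption $\mu(\mathbf{V}^{\mathbf{H}_0}(k))<1$ forces $\mathbf{H}_0\not\subseteq\tilde{\mathbf{K}}$.

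To close the contradiction I need $\tilde{\mathbf{K}}\in\mathcal{F}$, so that $\tilde{\mathbf{K}}\cap\mathbf{H}_0\in\mathcal{F}$ is strictly smaller than $\mathbf{H}_0$. The only non-immediate property is precompactness of the image of $L$ in $(N_{\mathbf{G}}(\tilde{\mathbf{K}})/\tilde{\mathbf{K}})(k)$, which I would obtain by an auxiliary induction on $\dim\mathbf{G}$. In the base case $\tilde{\mathbf{K}}=\{e\}$, the disintegrated $L$-invariant conditional measure on a free $G$-orbit identifies with an $L$-invariant probability measure on $G$ under left multiplication; tightness yields a compact set $K\subset G$ of measure $>1/2$, so $lK\cap K\neq\emptyset$ for every $l\in L$, whence $L\subseteq KK^{-1}$ is precompact. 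In the non-trivial case with $\dim N_{\mathbf{G}}(\tilde{\mathbf{K}})/\tilde{\mathbf{K}}<\dim\mathbf{G}$, pass to $\bar{\mathbf{G}}:=N_{\mathbf{G}}(\tilde{\mathbf{K}})/\tilde{\mathbf{K}}$, apply the inductive hypothesis to the image $\bar L$ of $L$ acting on $\mathbf{V}^{\tilde{\mathbf{K}}}$, and lift the resulting $\bar{\mathbf{H}}_0$ back to a refinement of $\tilde{\mathbf{K}}$ inside $\mathcal{F}$.

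The main obstacle is the precompactness step: coordinating the two Noetherian descents (on $\mathcal{F}$ and on $\dim\mathbf{G}$), handling the edge case where $\tilde{\mathbf{K}}$ is finite but non-trivial so that $\dim\bar{\mathbf{G}}$ does not strictly decrease, and justifying that the image $\bar L$ of $L$ in $\bar{\mathbf{G}}(k)$ can be treated as a closed input to the inductive hypothesis without losing invariance of the induced measure. Once precompactness is secured, the Noetherian minimality argument closes the contradiction cleanly.
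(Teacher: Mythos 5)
Your overall scheme---take $\mathbf{H}_0$ to be the minimum of the family $\mathcal{F}$ of $L$-normalized $k$-subgroups $\mathbf{H}$ for which the image of $L$ in $(N_{\mathbf{G}}(\mathbf{H})/\mathbf{H})(k)$ is precompact, then obtain the support statement by contradiction---cannot be closed as written, because the one step you defer is exactly the hard content of the proposition. To contradict minimality you need $\tilde{\mathbf{K}}\in\mathcal{F}$ \emph{itself} (so that $\mathbf{H}_0\subseteq\tilde{\mathbf{K}}$ and hence $\mathbf{V}^{\tilde{\mathbf{K}}}\subseteq\mathbf{V}^{\mathbf{H}_0}$), but your induction on $\dim\mathbf{G}$ does not deliver this: the dimension of $N_{\mathbf{G}}(\tilde{\mathbf{K}})/\tilde{\mathbf{K}}$ need not drop (e.g.\ $\tilde{\mathbf{K}}$ finite), and even when it does, the inductive hypothesis applied to $\bar L$ only returns \emph{some} subgroup $\bar{\mathbf{H}}_0$, i.e.\ an element of $\mathcal{F}$ \emph{containing} $\tilde{\mathbf{K}}$ rather than $\tilde{\mathbf{K}}$ itself, so one would have to rerun the whole support argument inside $\mathbf{V}^{\tilde{\mathbf{K}}}$ and iterate; none of this is carried out, and you acknowledge as much. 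Two further steps are asserted without justification: that the generic stabilizers $\mathbf{K}^{(n)}$ stabilize to a single group $\mathbf{K}^*$ independent of the sequence (since $\mu$ is not assumed ergodic and $\mathbf{K}^{(n)}$ is a function of the tuple, this needs an exchangeability/zero--one argument \`a la Hewitt--Savage), and that precompactness passes to $G/(\mathbf{H}_1\cap\mathbf{H}_2)(k)$ through the diagonal orbit, which is only locally closed in the product; here one must observe, as in the proof of Lemma~\ref{intalm}, that a locally closed subgroup of a topological group is closed, so that the compact closure of the image of $L$ does not escape the orbit.

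The paper's proof avoids the induction entirely by minimizing over a different family. After replacing $\mathbf{G}$ by the Zariski closure of $L$, it takes $\mathbf{H}_0$ minimal among $k$-subgroups $\mathbf{H}$ with $\Prob(({\bf G}/{\bf H})(k))^L\neq\emptyset$. Normality of $\mathbf{H}_0$ (hence $L$-invariance) follows by applying $\mu_0\times\mu_0$ and Corollary~\ref{ec-replacement} to the Zariski-open, $\mathbf{G}$-invariant set of non-normalizing pairs, using Zariski density of $L$; the support statement follows from $\mu_0\times\mu$ on ${\bf G}/{\bf H}_0\times{\bf V}$ by the same corollary; and---this is the ingredient your argument is missing---precompactness of the image of $L$ in the group $({\bf G}/{\bf H}_0)(k)$ is extracted directly from the existence of the $L$-invariant probability measure $\mu_0$ via Weil's theorem that a Polish group carrying an invariant probability measure is compact. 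Building the invariant measure into the minimized family is what makes the precompactness come for free rather than requiring a second descent.
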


\begin{proof}
Replacing $\bfG$ by the Zariski closure of $L$, we assume that $L$ is Zariski-dense in ${\bf G}$
and consider the collection 
\[ \{{\bf H}<{\bf G}~|~{\bf H} \mbox{ is a $k$-algebraic subgroup},~\Prob({\bf G}/{\bf H}(k))^L\neq \emptyset\}. \]
By the Noetherian property of ${\bf G}$ there exists a minimal element ${\bf H}_0$ in this collection.
We let $\mu_0$ be a corresponding $L$-invariant measure on ${\bf G}/{\bf H}_0(k)$.

We first claim that ${\bf H}_0$ is normal in ${\bf G}$.
Assuming not, we let ${\bf N} \lneq {\bf G}$ be the normalizer of ${\bf H}_0$ and consider the set 
\[{\bf U} =\{(x{\bf H}_0,y{\bf H}_0)~|~y^{-1}x\notin {\bf N}\} \subset {\bf G}/{\bf H}_0\times {\bf G}/{\bf H}_0. \]
This set is a non-empty Zariski-open set which is invariant under the diagonal ${\bf G}$-action,
as its complement is the preimage of the diagonal under the natural map ${\bf G}/{\bf H}_0\times {\bf G}/{\bf H}_0 \to {\bf G}/{\bf N}\times {\bf G}/{\bf N}$.
Since the support of $\mu_0\times\mu_0$ in ${\bf G}/{\bf H}_0\times {\bf G}/{\bf H}_0$ is invariant under $L\times L$ which is Zariski-dense in ${\bf G}\times {\bf G}$ we get that $(\mu_0\times\mu_0)({\bf U}(k))\neq 0$.
It follows from Corollary~\ref{ec-replacement} that there exist $u\in {\bf U}(k)$ and an $L$-invariant finite measure on $G/\Stab_G(u)\subset ({\bf G}/\Stab_{\bf G}(u))(k)$.
By the definition of ${\bf U}$ we get a contradiction to the minimality of ${\bf H}_0$,
as point stabilizers in ${\bf U}$ are properly contained in conjugates of ${\bf H}_0$.
This proves that ${\bf H}_0$ is normal in ${\bf G}$.

Next we let ${\bf V}$ be a $k$-${\bf G}$-variety and $\mu$ be an $L$-invariant measure on ${\bf V}(k)$.
We argue to show that $\mu$ is supported on ${\bf V}^{{\bf H}_0}\cap {\bf V}(k)$.
Indeed, assume not. Let $\bf{V'}$ be the Zariski-closure of ${\bf V}(k)\cap {\bf V}^{\bf H_0}$, and ${\bf V}''={\bf V}-{\bf V'}$. Then we see that ${\bf V'}$ is defined over $k$ \cite[AG, 14.4]{borel}. Furthermore, $\bf H_0$ acts on $\bf V'$ trivially, so that we have  ${\bf V'}(k)={\bf V}(k)\cap {\bf V}^{\bf H_0}$. 
Hence by assumption we get that $\mu({\bf V}''(k))>0$. Replacing ${\bf V}$  by ${\bf V}''$ and restricting and normalizing the measure,
 we may and shall assume that ${\bf V}^{{\bf H}_0}\cap {\bf V}(k)=\emptyset$.
 
We consider the variety ${\bf G}/{\bf H}_0\times{\bf V}$ as a $k$-${\bf G}$-variety.
The measure $\mu_0\times \mu$ is an $L$-invariant measure on $({\bf G}/{\bf H}_0\times{\bf V})(k)$.
It follows from Corollary~\ref{ec-replacement} that there exists $u\in ({\bf G}/{\bf H}_0\times{\bf V})(k)$ and an $L$-invariant measure on 
$G/\Stab_G(u)$.
By Proposition~\ref{polishing} there exist a $k$-algebraic subgroup ${\bf H}<{\bf G}$ with $H={\bf H}(k)=\Stab_G(u)$ and an orbit map 
${\bf G}/{\bf H}\to {\bf G}u$ inducing a homeormorphism $G/H\to G/\Stab_G(u)$.
Thus we obtain an $L$-invariant probability measure on ${\bf G}/{\bf H}(k)$.
Now, $\bf H$ is contained in some conjugate $g{\bf H}_0g^{-1}$, for some $g\in {\bf G}$.  Hence we get that $g^{-1}{\bf H} g< {\bf H}_0$ is such that ${\bf G}/ g^{-1}{\bf H} g$ has an $L$-invariant probability measure. By minimality, this implies that $g^{-1}{\bf H}g={\bf H_0}$, hence by normality of $\bf H_0$, ${\bf H}={\bf H_0}$. Therefore $u$ belongs to ${\bf V}(k)\cap {\bf V}^{\bf H_0}$, which was assumed to be empty. Hence we get a contradiction. 
This proves that $\mu$ is supported on ${\bf V}^{{\bf H}_0}$.

We set $S=({\bf G}/{\bf H}_0)(k)$
and let $T$ be the closure of the image of $L$ in $S$.
We are left to show that $T$ is compact.
$S$ is a Polish group and $T$ is a closed subgroup.
The quotient topology on $T\backslash S$ is Hausdorff, and in particular $T_0$.
The measure $\mu_0$ is an $L$-invariant finite measure on $S$, hence it is also $T$-invariant.
Substituting $S=V$ and $T=G=L$ in Corollary~\ref{ec-replacement} we find a finite measure $\mu_1$ on $S$ which is supported on a unique $T$-coset, $Ts$. 
The measure $(R_s)_*\mu_1$, given by pushing $\mu_1$ by the right translation by $s^{-1}$ is then a $T$-invariant probability measure on $T$.
It is well-known result due to A. Weil (see \cite{Oxtoby} where the result is attributed to Ulam) that a Polish group that admits an invariant measure class is locally compact, and a locally compact group that admits an invariant probability measure is compact. Thus $T$ is indeed compact.
\end{proof}

\begin{cor}\label{almProbHom}
Fix a $k$-${\bf G}$-algebraic variety ${\bf V}$, and set $V={\bf V}(k)$. Let $\mu\in\Prob(V)$. Then $\Stab(\mu)$ is almost algebraic.
\end{cor}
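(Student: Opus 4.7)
The plan is to apply Proposition~\ref{prop:stab} to the closed subgroup $L := \Stab(\mu) < G$ (closed since the $G$-action on $\Prob(V)$ is continuous by Lemma~\ref{contprob}), and to show that the resulting $k$-algebraic subgroup ${\bf H}_0$ witnesses $L$ being almost algebraic.

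First I would observe that, since $\mu$ is $L$-invariant, the support conclusion of Proposition~\ref{prop:stab} forces $\mu$ to be concentrated on $V\cap{\bf V}^{{\bf H}_0}$. As every point of this set is fixed by ${\bf H}_0$, the group $H_0 := {\bf H}_0(k)$ fixes $\mu$, giving $H_0\subseteq L$; the normality $H_0\triangleleft L$ is immediate from $L$ normalizing ${\bf H}_0$.

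What remains is to check that $L/H_0$ is compact. Proposition~\ref{polishing} identifies $N_{\bf G}({\bf H}_0)(k)/H_0$ as a topological subspace of $Q := (N_{\bf G}({\bf H}_0)/{\bf H}_0)(k)$, in which $L/H_0$ sits as a closed subgroup (since $L$ is closed in $N_{\bf G}({\bf H}_0)(k)$ and $H_0$-saturated). Proposition~\ref{prop:stab} gives that the image of $L/H_0$ in $Q$ is precompact, with compact closure $T$. The main obstacle I expect is to pass from precompactness to compactness of $L/H_0$; to handle this I would exploit that $L$ is the \emph{full} $G$-stabilizer of $\mu$. Indeed, the stabilizer $\Sigma\subseteq Q$ of $\mu$ is closed (as $q\mapsto q\mu$ is continuous from $Q$ into $\Prob(V\cap {\bf V}^{{\bf H}_0})$) and contains $T$; conversely, any $q\in T$ admitting a lift $g\in N_{\bf G}({\bf H}_0)(k)$ must satisfy $g\mu=\mu$ (since the action of $N_{\bf G}({\bf H}_0)(k)$ on $V\cap{\bf V}^{{\bf H}_0}$ factors through $Q$), whence $g\in L$. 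Combined with the fact that, over our complete valued field $k$, $N_{\bf G}({\bf H}_0)(k)/H_0$ is closed in $Q$ (so that the closure $T$ of a subset of $N_{\bf G}({\bf H}_0)(k)/H_0$ remains inside it), this identifies $T$ with the image of $L/H_0$, forcing $L/H_0$ to be compact. Hence $L$ contains $H_0$ as a normal cocompact subgroup, i.e., $L$ is almost algebraic.
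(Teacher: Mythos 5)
Your proof is correct and follows essentially the same route as the paper's: apply Proposition~\ref{prop:stab}, deduce $H_0<L$ from the support statement, and upgrade the precompact image of $L$ to a compact one by observing that, since $L$ is the full stabilizer of $\mu$ and the relevant subgroup is closed in $(N_{\bf G}({\bf H}_0)/{\bf H}_0)(k)$, the image of $L$ is in fact closed. The paper performs the same steps after first replacing ${\bf G}$ by the Zariski closure of $L$ (so that ${\bf H}_0$ becomes normal and one works in $({\bf G}/{\bf H}_0)(k)$), which is only a cosmetic difference from your direct use of $N_{\bf G}({\bf H}_0)$.
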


\begin{proof}
Let $L=\Stab(\mu)$. We may and shall assume $L$ to be Zariski-dense in $G$, and we can find $\bfH_0$ as in Proposition \ref{prop:stab}.
We know that $\mu$ is supported on the set of ${\bf V}^{{\bf H}_0}$ thus $H_0={\bf H}_0(k)<L$.
Since $G/H_0$ is acting on ${\bf V}^{{\bf H}_0}\cap {\bf V}(k)$
and the stabilizer of $\mu$ is closed in $G/H_0$, we conclude that $L$ has a closed image.
We know that the image of $L$ is precompact, thus it is actually compact, and we conclude that $L$ is almost algebraic.
\end{proof}

\begin{lemma}\label{almHaar}
Let $L<G$ be an almost algebraic group, with $H=\bfH(k)$ a normal cocompact algebraic subgroup of $L$.
Then there is a $G$-equivariant continuous map $\phi:\Prob(G/L)\to \Prob(G/H)$. Furthermore, we have, for every $\mu\in \Prob(G/L)$, $\Stab(\mu)=\Stab(\phi(\mu))$.
\end{lemma}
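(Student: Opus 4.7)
The plan is to build $\phi$ out of a distinguished equivariant lift $\psi\colon G/L\to\Prob(G/H)$ and then exploit the fact that $\phi(\mu)$ projects back onto $\mu$ along the natural map $p\colon G/H\to G/L$.

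First, since $L/H$ is a compact topological group (it is closed in the Polish group $G/H$ and compact by assumption), it carries a unique bi-invariant Haar probability measure $m$. I view $m$ as a Borel probability measure supported on the fiber $p^{-1}(eL)=L/H\subset G/H$. Define $\psi(gL)=g_{*}m\in\Prob(G/H)$, supported on the fiber $p^{-1}(gL)=g\cdot (L/H)$. This is well defined: if $g'=gl_{0}$ with $l_{0}\in L$, then $(gl_{0})_{*}m=g_{*}((l_{0})_{*}m)=g_{*}m$ because $L$ acts on $L/H$ through its quotient $L/H$, where the Haar measure $m$ is left-invariant. The map $G\to\Prob(G/H)$, $g\mapsto g_{*}m$, is continuous by Lemma~\ref{contprob}, and being $L$-invariant on the right, factors through the Polish quotient $G/L$. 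Clearly $\psi$ is $G$-equivariant.

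Next, define $\phi(\mu)=\int_{G/L}\psi(x)\,\dd\mu(x)$, i.e.\ the barycenter of the pushforward $\psi_{*}\mu\in\Prob(\Prob(G/H))$. Concretely, for any bounded continuous $f$ on $G/H$,
\[
\int_{G/H} f\,\dd\phi(\mu)=\int_{G/L}\!\Big(\!\int_{G/H} f\,\dd\psi(x)\Big)\dd\mu(x).
\]
Since $\psi$ is continuous and bounded in the weak-$*$ topology, $\mu\mapsto\phi(\mu)$ is continuous (dominated convergence via Theorem~\ref{Prok}); $G$-equivariance of $\phi$ follows from that of $\psi$ by a direct change-of-variables computation.

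For the stabilizer equality, one inclusion is immediate from equivariance: $g\mu=\mu$ implies $g\phi(\mu)=\phi(g\mu)=\phi(\mu)$, so $\Stab(\mu)\subseteq\Stab(\phi(\mu))$. The reverse inclusion is the key point, and it uses that $\phi$ is a section of $p_{*}$: since $\psi(gL)$ is supported in $p^{-1}(gL)$, we have $p_{*}\psi(gL)=\delta_{gL}$, and therefore
\[
p_{*}\phi(\mu)=\int_{G/L}p_{*}\psi(x)\,\dd\mu(x)=\int_{G/L}\delta_{x}\,\dd\mu(x)=\mu.
\]
Because $p$ is $G$-equivariant, any $g\in\Stab(\phi(\mu))$ satisfies $g\mu=g\,p_{*}\phi(\mu)=p_{*}(g\phi(\mu))=p_{*}\phi(\mu)=\mu$, giving $\Stab(\phi(\mu))\subseteq\Stab(\mu)$. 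The main (minor) obstacle is just keeping track of well-definedness of $\psi$ on cosets; everything else is a standard barycenter construction, and the stabilizer equality is automatic once $p_{*}\phi=\id$ is recorded.
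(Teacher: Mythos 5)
Your proposal is correct and is essentially the paper's own proof in different notation: your $\int f\,\dd\psi(gL)$ is exactly the paper's averaged function $\overline f(gL)=\int_{L/H}f(gh)\,\dd\lambda(h)$, the barycenter formula $\phi(\mu)(f)=\int\overline f\,\dd\mu$ is the same, and both arguments get $\Stab(\phi(\mu))\subseteq\Stab(\mu)$ from the identity $p_*\phi(\mu)=\mu$. No gaps.
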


\begin{proof}
Let $\lambda$ be a Haar probability measure on $L/H$. For a continuous bounded function $f$ on $G/H$ let $\overline f$ be the continuous bounded function on $G/L$ defined by $\overline f(gL)=\int_{L/H}f(gh)\, \dd\lambda(h)$ and finally $\phi(\mu)(f)=\mu({\overline f})$.

Then it is clear that $\phi$ is equivariant, and we deduce that $\Stab(\mu)\subset \Stab(\phi(\mu))$. In the other direction, we note that if $\pi:G/H\to G/L$ is the projection, we have $\pi_*(\phi(\mu))=\mu$. Hence the other inclusion is also clear.

To check the continuity, let $\mu_n\to \mu\in \Prob(G/L)$, and take $f$ a continuous bounded function on $G/H$. Then $\phi(\mu_n)(f)=\mu_n(\overline f) \to \mu(\overline f)=\phi(\mu)(f)$. Hence $\phi(\mu_n)$ converges to $\phi(\mu)$.
\end{proof}

\begin{proof}[Proof of Theorem~\ref{almost-alg}] 
Choose $\mu\in\Prob(V)$ and denote $L=\Stab_G(\mu)$, $H=\Fix_G(\supp(\mu))$. 
Set $U=G\backslash V$, and let $\nu=p_\bullet\mu$, where $p:V\to U$ is the projection.
Note that $p$ is a Polish fibration.
By Theorem~\ref{disintegration}, $L$ is equal to the stabilizer of an element $f\in\LL^0_{p_\bullet}(U,\Prob_U(V))$.
By Lemma \ref{stab} there exists a $\nu$-full measure set $U_1\subset U$ such that $L=\bigcap_{u\in U_1}\Stab(f(u))$. 
For a fixed $u\in U_1$, $f(u)$ is a measure on a $G$-orbit in $V$ which we identify with $G/L'$ for some almost algebraic subgroup $L'<G$. 
Let ${\bf H'}<{\bf G}$ be a $k$-algebraic subgroup such that $H'={\bf H'}(k)$ is a cocompact normal subgroup of $L'$. 
By Lemma \ref{almHaar}, $\Stab(f(u))$ is also the stabilizer of a probability measure on $G/H' \subset {\bf G}/{\bf H'}(k)$. 
By Corollary \ref{almProbHom}, it follows that $\Stab(f(u))$ is almost algebraic.
We conclude that $L$ is almost algebraic by Lemma~\ref{intalm}.
\end{proof}

\subsection{Separating orbits in the space of probability measures}\label{sec:T0}

In this subsection, we prove the following theorem.

\begin{theorem}\label{tameomog}
Let $L<G$ be an almost algebraic subgroup. Then the action of $G$ on $\Prob(G/L)$ is almost algebraic.
\end{theorem}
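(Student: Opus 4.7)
By Example~\ref{aaaex} the action $G\curvearrowright G/L$ is almost algebraic, so Theorem~\ref{almost-alg} yields that every $\mu\in\Prob(G/L)$ has an almost algebraic stabilizer; write $L_\mu=\Stab_G(\mu)$. What remains is the $T_0$ property of $G\backslash\Prob(G/L)$, and by Effros' theorem (Theorem~\ref{effros}) together with Remark~\ref{remarkeffros} this is equivalent to showing that whenever $(g_n)\subset G$ satisfies $g_n\mu\to\mu$ in $\Prob(G/L)$, the image of $g_n$ in $G/L_\mu$ converges to the identity.

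The first move is to reduce to the case where $L$ is itself the $k$-points of an algebraic subgroup. Let $H=\mathbf{H}(k)$ be a $k$-algebraic subgroup that is normal and cocompact in $L$; Lemma~\ref{almHaar} supplies a continuous, $G$-equivariant, injective map $\phi\colon\Prob(G/L)\to\Prob(G/H)$ that preserves stabilizers. As both the convergence and the stabilizer transfer under $\phi$, we may assume $L=H=\mathbf{H}(k)$. Then by Proposition~\ref{polishing} the orbit $G/H$ embeds homeomorphically as a $G$-invariant locally closed subset of $\mathbf{V}(k)$ for $\mathbf{V}=\mathbf{G}/\mathbf{H}$, so we henceforth regard $\mu$ as an element of $\Prob(\mathbf{V}(k))$. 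The central structural input is then Proposition~\ref{prop:stab} applied to $L_\mu$: it produces a $k$-algebraic subgroup $\mathbf{H}_0<\mathbf{G}$, normalized by $L_\mu$, such that $\mu$ is supported on the closed subvariety $\mathbf{V}^{\mathbf{H}_0}\cap\mathbf{V}(k)$ and the image of $L_\mu$ in the Polish group $\bar{G}=(N_{\mathbf{G}}(\mathbf{H}_0)/\mathbf{H}_0)(k)$ is precompact.

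The main obstacle---and the technical core of the proof---is to convert the weak convergence $g_n\mu\to\mu$ into convergence of $g_n$ modulo $L_\mu$. My plan is a Noetherian induction on $\dim\mathbf{G}$. In the inductive step, when $\mathbf{H}_0$ is nontrivial, the whole analysis descends to the action of the strictly smaller algebraic group $\bar{\mathbf{G}}=N_{\mathbf{G}}(\mathbf{H}_0)/\mathbf{H}_0$ on the $k$-variety $\mathbf{V}^{\mathbf{H}_0}$; applying the inductive hypothesis together with the precompactness of the image of $L_\mu$ and Lemma~\ref{extalm} then yields the desired conclusion. In the base case $\mathbf{H}_0=1$, $L_\mu$ itself is compact; Prokhorov tightness (Theorem~\ref{Prok}) applied to $\{g_n\mu\}\cup\{\mu\}$ furnishes a compact $K\subset\mathbf{V}(k)$ with $g_nK\cap K\neq\emptyset$ eventually, and a Fatou-type argument yields a point $x\in\supp(\mu)$ with $g_nx$ lying in a compact set along a subsequence; the $T_0$ property of the algebraic action $G\curvearrowright\mathbf{V}(k)$ from Proposition~\ref{polishing} then converts this into pointwise convergence in $G/\Stab_G(x)$, and Lemma~\ref{stab} combined with Lemma~\ref{product} intersects these constraints over a countable dense family of base points to conclude that $g_n\to e$ in $G/L_\mu$.
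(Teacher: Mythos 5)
Your opening moves match the paper: almost algebraicity of stabilizers via Theorem~\ref{almost-alg}, the reduction from $L$ to its normal cocompact algebraic subgroup $H$ via Lemma~\ref{almHaar}, and the Effros reformulation from Remark~\ref{remarkeffros}. But the core of the argument --- deducing $g_n\to e$ in $G/\Stab(\mu)$ from $g_n\mu\to\mu$ --- contains a genuine gap in each branch of your induction. In the inductive step ($\mathbf{H}_0\neq 1$) you propose to ``descend the whole analysis'' to $N_{\mathbf{G}}(\mathbf{H}_0)/\mathbf{H}_0$ acting on $\mathbf{V}^{\mathbf{H}_0}$. But the sequence $(g_n)$ lives in $G$ and need not normalize $\mathbf{H}_0$: the measures $g_n\mu$ are supported on the fixed-point sets of the conjugates $g_n\mathbf{H}_0g_n^{-1}$, and weak convergence $g_n\mu\to\mu$ gives essentially no control on supports, so nothing forces $g_n$ into, or even near, $N_G(\mathbf{H}_0)$. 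The descent therefore does not get off the ground.

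In the base case ($\mathbf{H}_0=1$, so $\Stab(\mu)$ compact) the argument fails because the action of $G$ on $\mathbf{G}/\mathbf{H}(k)$ is not proper: $g_nK\cap K\neq\emptyset$ gives no precompactness of $(g_n)$, and knowing that $g_nx$ stays in a compact set only yields $g_nx\to y$ along a subsequence for some $y$ that need be neither $x$ nor even in the orbit $Gx$ (orbits are merely locally closed). Effros' theorem via Proposition~\ref{polishing} applies only once you know $g_nx\to x$, which you have not established. The paradigmatic situation you must handle here is $\mu$ a measure on a flag variety $G/P$ with compact stabilizer --- a Furstenberg-lemma situation, and exactly where the real work lies. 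The paper resolves it by running the Noetherian induction on the subgroup $\mathbf{H}$ itself (taking a minimal $\mathbf{H}$ for which the claim fails), decomposing $\pi_*\mu$ on $\mathbf{G}/N_{\mathbf{G}}(\mathbf{H})(k)$ into atomic and non-atomic parts (Lemma~\ref{decom}, with the atomic part treated in Lemma~\ref{lem:atom}), and, for the non-atomic part, passing to $\mu\times\mu$ on the set $\mathbf{U}=\{(x\mathbf{H},y\mathbf{H})\mid y^{-1}x\notin N_{\mathbf{G}}(\mathbf{H})\}$, which has full measure and whose point stabilizers come from subgroups strictly smaller than conjugates of $\mathbf{H}$, so that minimality together with the disintegration result (Proposition~\ref{tame-orbits}) applies. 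That device is absent from your proposal and is precisely what is needed to close both gaps.
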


The proof of Theorem~\ref{tameomog} consists in several steps, proving particular cases of the theorem, each of them using the previous one. First we start with the case when $L=G$ (Lemma \ref{prob(G)}). Then we treat the case when $L$ is a normal algebraic subgroup of $G$ (Lemma \ref{prob(G/N)}). The main step is then to deduce the theorem when $L$ is any algebraic subgroup of $G$ (Proposition \ref{aaaa}), before concluding with the general case.

\begin{lemma} \label{prob(G)}
The $G$-action on $\Prob(G)$ is almost algebraic.
\end{lemma}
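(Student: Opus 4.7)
The plan is to reduce this lemma directly to Lemma~\ref{Prokhorov}. The left translation action of $G$ on itself is proper: for any compact $C\subset G$, the set $\{g\in G \mid gC\cap C\neq\emptyset\}$ is contained in the compact set $CC^{-1}$, using only the continuity of the group operations. Applying Lemma~\ref{Prokhorov}, the induced action of $G$ on $\Prob(G)$ is then also proper.

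Once properness is in hand, both requirements of Definition~\ref{aag-aaa} follow at once. First, the stabilizer of any $\mu\in\Prob(G)$ is compact, as it is the set of $g$ with $g\{\mu\}\cap\{\mu\}\neq\emptyset$; any compact subgroup $L<G$ is almost algebraic, by taking the trivial $k$-algebraic subgroup ${\bf H}=\{e\}$, whose $k$-points $H=\{e\}$ are clearly normal and cocompact in $L$. Second, a proper action has Hausdorff orbit space, so $G\backslash\Prob(G)$ is in particular $T_0$, which gives the orbit-separation condition.

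I do not anticipate any real obstacle: this lemma is essentially the observation already made in the introduction that proper actions form one of the two basic classes of almost algebraic actions, combined with the fact that $\Prob$ preserves properness (Lemma~\ref{Prokhorov}). The genuine work in this subsection will only begin at the next step, where one has to treat $\Prob(G/N)$ for a proper normal algebraic subgroup $N$, at which point the action on $G$ is no longer proper and the reduction through Lemma~\ref{Prokhorov} is no longer available.
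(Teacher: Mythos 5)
Your proof is correct and follows exactly the paper's own argument: the regular action of $G$ on itself is proper, Lemma~\ref{Prokhorov} transfers properness to $\Prob(G)$, and proper actions are almost algebraic (compact stabilizers with ${\bf H}=\{e\}$, Hausdorff hence $T_0$ orbit space). The paper states this in one line; you have merely filled in the same details it leaves implicit.
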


\begin{proof}
The regular action of $G$ on itself is proper, so by Lemma~\ref{Prokhorov} it follows that the action of $G$ on $\Prob(G)$ is proper. Any proper action is almost algebraic.
\end{proof}

\begin{lemma} \label{prob(G/N)}
Let ${\bf H}<{\bf G}$ be a normal $k$-algebraic subgroup.
Then the $G$-action on $\Prob(({\bf G}/{\bf H})(k))$ is almost algebraic.
\end{lemma}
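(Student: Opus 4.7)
The key observation is that since ${\bf H}$ is normal in ${\bf G}$, the quotient ${\bf G}/{\bf H}$ is itself a $k$-algebraic group, so $I := ({\bf G}/{\bf H})(k)$ is a Polish \emph{group} (by Proposition~\ref{polishing}), not just a Polish $G$-space. My plan is to exploit this group structure to reduce the statement to the proper case already handled in Lemma~\ref{prob(G)}.

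First, I would consider the natural continuous homomorphism $\pi : G \to I$. The image $\bar G := \pi(G)$ is exactly the $G$-orbit of the identity coset $eH \in I$, and by Proposition~\ref{polishing} the map $G/H \to \bar G$ is a homeomorphism. Since $\bar G$ is a subgroup of $I$ and is locally closed there (orbits of algebraic actions are locally closed), a standard coset argument shows that $\bar G$ is in fact closed in $I$: the cosets of $\bar G$ in $\overline{\bar G}$ form an open partition, so $\bar G$ is closed in its closure. Thus $\bar G$ is a closed subgroup of the Polish group $I$, and the $G$-action on $\Prob(I)$ factors through $\bar G$.

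Next, the regular action of $I$ on itself is proper (for any compact $C\subset I$, the set $\{g\in I : gC\cap C\neq\emptyset\}\subset CC^{-1}$ is precompact), and properness is inherited by closed subgroups, so $\bar G$ acts properly on $I$. By Lemma~\ref{Prokhorov}, the induced action of $\bar G$ on $\Prob(I)$ is then also proper. From this I extract the two conclusions required:

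\begin{itemize}
\item[(i)] For any $\mu\in \Prob(I)$, the stabilizer $\Stab_{\bar G}(\mu)$ is a compact subgroup of $\bar G$, and $\Stab_G(\mu) = \pi^{-1}(\Stab_{\bar G}(\mu))$ is an extension of the algebraic subgroup $H={\bf H}(k)$ by a compact group. By Lemma~\ref{extalm} this stabilizer is almost algebraic.
\item[(ii)] The quotient $\bar G\backslash \Prob(I)$ is Hausdorff (since $\bar G$ acts properly), and it coincides with $G\backslash \Prob(I)$ because $G$ acts through $\bar G$; in particular the quotient is $T_0$.
\end{itemize}

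Combining (i) and (ii) gives the conclusion that $G \curvearrowright \Prob(I)$ is almost algebraic. I do not anticipate a real obstacle here: the only slightly delicate point is the passage from ``locally closed subgroup'' to ``closed subgroup'' of $I$, which is the one place the normality of ${\bf H}$ (through the group structure on $I$) is used in an essential way.
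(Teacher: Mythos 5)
Your argument is correct, but it takes a genuinely different route from the paper's. The paper keeps the orbit-by-orbit strategy used throughout \S3: it applies Lemma~\ref{prob(G)} to $I$ itself to see that the $I$-action on $\Prob(I)$ is proper, observes that $G$ (acting through $G/H<I$) preserves each $I$-orbit $I\mu\simeq I/L$ with $L$ almost algebraic by Theorem~\ref{almost-alg}, and then invokes Example~\ref{aaaex} (which rests on Lemmas~\ref{tameproduct} and~\ref{cnsa}) to conclude that $G$ acts almost algebraically on each such $I/L$. You avoid the orbit decomposition entirely: you note that $\bar G=\pi(G)$ is a locally closed, hence closed, subgroup of $I$, that left translation by a closed subgroup is proper, and that Lemma~\ref{Prokhorov} then makes the $\bar G$-action on $\Prob(I)$ proper; this delivers in one stroke the compactness of $\Stab_{\bar G}(\mu)$ (so that $\Stab_G(\mu)=\pi^{-1}(\Stab_{\bar G}(\mu))$ contains $H$ as a normal cocompact subgroup and is almost algebraic directly from Definition~\ref{aag-aaa}, without really needing Lemma~\ref{extalm}) and the Hausdorffness of the orbit space. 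Your version is shorter, bypasses Example~\ref{aaaex} and Theorem~\ref{almost-alg}, and actually proves something stronger --- the induced $G/H$-action on $\Prob(I)$ is proper, so $G\backslash\Prob(I)$ is $T_2$ rather than merely $T_0$ --- while the paper's version has the advantage of reusing machinery needed elsewhere in \S3 anyway. Two small points you should make explicit when writing this up: Lemma~\ref{Prokhorov} is stated for the fixed group $G$, but its proof applies verbatim to any Polish group acting continuously on a Polish space, which is what justifies applying it to the action of $\bar G$ on $I$; and for the cocompactness of $H$ in $\Stab_G(\mu)$ one should note that the quotient topology on $\Stab_G(\mu)/H$ agrees with the subspace topology it inherits from $G/H\simeq\bar G$, which holds because the projection $G\to G/H$ is an open map.
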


\begin{proof}
Denoting ${\bf I}={\bf G}/{\bf H}$ and $I={\bf I}(k)$,
we know that the $I$-action on $\Prob(I)$ is almost algebraic (Lemma \ref{prob(G)}). Since $G/H$ is a subgroup of $I$, $G$ stabilizes each $I$-orbit. It is thus enough to show that $G$ acts almost algebraically on each $I$-orbit.
We know that such an orbit is of the form $I/L$ where $L$ is almost algebraic (Theorem~\ref{almost-alg}),
so this follows from Example~\ref{aaaex}.
\end{proof}

An essential technical tool for proving Theorem~\ref{tameomog} and Theorem~\ref{mainthm} is given by the following proposition.

\begin{proposition}\label{tame-orbits}
Let  $V$ be a Polish space, with a continuous action of $G$. Assume that 
\begin{itemize}
\item The quotient topology on $G\backslash V$ is $T_0$, and
\item For any $v\in V$, the action of $G$ on $\Prob(G.v)$ is almost algebraic.
\end{itemize}
Then the quotient topology on $G\backslash \Prob(V)$ is $T_0$.
\end{proposition}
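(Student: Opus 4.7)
The plan is to verify the sequential formulation of Effros' theorem (Theorem~\ref{effros}, Remark~\ref{remarkeffros}): for every $\mu\in\Prob(V)$ and every sequence $g_n\in G$ with $g_n\mu\to\mu$, I want to show $g_n\to e$ in $G/\Stab(\mu)$. Convergence in the Polish quotient $G/\Stab(\mu)$ is characterized by subsequences, so it suffices, from any subsequence of $(g_n)$, to extract a further subsequence converging to $e$ in $G/\Stab(\mu)$.

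I first project to $U:=G\backslash V$ via $p$ and set $\nu=p_*\mu$. Because $G$ acts trivially on $U$, pushforward along $p$ is $G$-invariant, so every $g_n\mu$ lies in the fiber $P_\nu=\{\eta:p_*\eta=\nu\}$; via the $G$-equivariant disintegration homeomorphism of Theorem~\ref{disintegration}, the hypothesis $g_n\mu\to\mu$ translates into convergence in probability $g_n\mu_\bullet\to\mu_\bullet$ in $\LL^0_{p_\bullet}(U,\Prob_U(V))$. Lemma~\ref{convsubseq} lets me pass to a subsequence $(g_{n_k})$ and a $\nu$-full set $U_1\subseteq U$ along which $g_{n_k}\mu_u\to\mu_u$ in $\Prob(Gv_u)$ for every $u\in U_1$. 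Hypothesis (ii) and Effros applied inside each $\Prob(Gv_u)$ yield, for every $u\in U_1$, the pointwise convergence $g_{n_k}\to e$ in $G/L_u$, where $L_u:=\Stab(\mu_u)$.

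The crux is combining these $u$-wise convergences into convergence modulo $\Stab(\mu)=\bigcap_{u\in U_1}L_u$ (Lemma~\ref{stab}). Here I rely on almost algebraicity: by Theorem~\ref{almost-alg} each $L_u$ is almost algebraic, containing $H_u=\mathbf{H}_u(k)$ as a cocompact normal subgroup for some $k$-algebraic $\mathbf{H}_u<\mathbf{G}$. Lemma~\ref{intalm} applied to $\{L_u\}_{u\in U_1}$ provides a finite $I_0\subseteq U_1$ and a $k$-algebraic $\mathbf{H}=\bigcap_{u\in I_0}\mathbf{H}_u=\bigcap_{u\in U_1}\mathbf{H}_u$ with $H=\mathbf{H}(k)$ cocompact normal in $\Stab(\mu)$. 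Compactness of each $L_u/H_u$ and finiteness of $I_0$ permit a further diagonal extraction along which $g_{n_k}H_u\to c_u\in L_u/H_u$ in $G/H_u$ for every $u\in I_0$. Replicating the closed-embedding argument in the proof of Lemma~\ref{intalm} -- working in the Polish groups $\mathbf{L}_u(k)/H_u$ (with $\mathbf{L}_u$ the Zariski closure of $L_u$, in which $\mathbf{H}_u$ is normal by the canonicity of Remark~\ref{canonic}) and observing that $\Stab(\mu)/H$ injects continuously as a compact, hence closed, subgroup of the compact product $\prod_{u\in I_0}L_u/H_u$ -- I obtain a further subsequence with $g_{n_k}H\to c$ in $G/H$ for some $c\in\Stab(\mu)/H$. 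A final lifting via the Effros criterion on $G/H$ (take $h_k\in H$ and $\tilde c\in\Stab(\mu)$ with $g_{n_k}h_k\to\tilde c$) then gives $h_k\tilde c^{-1}\in\Stab(\mu)$ and $g_{n_k}(h_k\tilde c^{-1})\to e$, which shows $g_{n_k}\to e$ in $G/\Stab(\mu)$, as required.

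The main obstacle is precisely this amalgamation step: turning a collection of $u$-wise convergences in the various $G/L_u$ into a single convergence modulo the (possibly much smaller) intersection $\Stab(\mu)$. The common algebraic cocompact normal subgroup $H$ produced by Lemma~\ref{intalm} and the closed diagonal embedding coming from its proof, both of which depend essentially on hypothesis (ii), are the tools that make this passage possible; without almost algebraicity one would have no analogue of the Noetherian reduction to a finite $I_0$ or of the compact transversal to lift $u$-wise limits to an element of the ambient group.
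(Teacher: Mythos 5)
Your opening moves coincide with the paper's: the paper also reduces to the fibers of $p_*\colon\Prob(V)\to\Prob(G\backslash V)$ (this is its Lemma~\ref{tame-sections}), passes through the disintegration homeomorphism of Theorem~\ref{disintegration}, extracts an almost-everywhere convergent subsequence via Lemma~\ref{convsubseq}, applies Effros inside each $\Prob(Gv_u)$ to get $g_{n_k}\to e$ in $G/L_u$, and uses Lemma~\ref{stab} to write $\Stab(\mu)$ as an intersection of the $L_u$. The divergence, and the gap, is in the amalgamation step. The paper reduces to a \emph{countable} set $U_3$ with $\Stab(\mu)=\bigcap_{u\in U_3}L_u$ and then simply invokes Lemma~\ref{product}: the diagonal action of $G$ on $\prod_{u\in U_3}G/L_u$ is almost algebraic, and since $g_{n_k}$ applied to the basepoint $(L_u)_u$ converges to that \emph{same} basepoint, Effros gives $g_{n_k}\to e$ modulo the intersection. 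Your inline substitute for this lemma does not go through as written. From $g_{n_k}H_u\to c_uH_u$ in $G/H_u$ for each $u$ in the finite set $I_0$ you cannot conclude that $g_{n_k}H$ converges in $G/H$: that would require the limit $(c_uH_u)_{u\in I_0}$ to lie in the diagonal $G$-orbit of $(H_u)_{u\in I_0}$ inside $\prod_{u\in I_0}G/H_u$, i.e.\ that a single $c\in G$ realizes all the cosets $c_uH_u$ simultaneously. That orbit is locally closed (Proposition~\ref{polishing}), but unlike the diagonal image of the group $\mathbf{L}(k)$ in the proof of Lemma~\ref{intalm} it is not a subgroup, so ``locally closed and a group, hence closed'' does not transfer, and the limit of the points $g_{n_k}\cdot(H_u)_u$ of the orbit may well sit in its boundary. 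The closedness of $\Stab(\mu)/H$ inside $\prod_{u\in I_0}L_u/H_u$ that you invoke is beside the point: the limit $(c_uH_u)_u$ is only known to lie in $\prod_{u\in I_0}L_u/H_u$, not in the image of $\Stab(\mu)/H$ or even of $\bigcap_{u\in I_0}L_u$.

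This is not a cosmetic issue: in general, convergence of a sequence to $e$ modulo each of two closed subgroups does \emph{not} imply convergence modulo their intersection (take $G=\bbR$, $L_1=\bbZ$, $L_2=\sqrt2\,\bbZ$, and $g_n\in\bbZ$ tending to infinity with $\operatorname{dist}(g_n,\sqrt2\,\bbZ)\to0$). Ruling this out is precisely the content of Lemma~\ref{product}, whose proof establishes the Effros property for the finite algebraic product $\prod_{u\in I_0}G/H_u$ \emph{at the basepoint} via Proposition~\ref{polishing}, and then descends to $\prod G/L_u$ by quotienting by the compact group $\prod L_u/H_u$ using Lemma~\ref{tameproduct} --- rather than by extracting coordinatewise limits $c_u$ as you do. Two smaller points: Theorem~\ref{almost-alg} is not applicable here, since the $G$-action on $V$ is not assumed almost algebraic (only the $T_0$ condition is); the almost algebraicity of $L_u=\Stab(\mu_u)$ follows directly from hypothesis (ii). And $\bigcap_{u\in I_0}L_u$ may be strictly larger than $\Stab(\mu)$, because the Noetherian reduction of Lemma~\ref{intalm} controls only the algebraic parts $\mathbf{H}_u$, not the compact quotients --- another reason the finite set $I_0$ cannot carry the whole argument.
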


The proposition will directly follow from the following lemma.

\begin{lemma}\label{tame-sections}
Let $p:V\to U$ be a Polish fibration with an action of $G$, and let $\nu$ be a  probability measure on $U$. Assume that the action of $G$ on $U$ is trivial and that the action of $G$ on $\Prob(p^{-1}(u))$ is almost algebraic for almost every $u\in U$. Let $P=\{\mu\in \Prob(V)\mid p_*\mu=\nu\}$. Then the topology on $G\backslash P$ is $T_0$.
\end{lemma}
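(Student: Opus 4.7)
My plan is to verify the Effros criterion (Theorem~\ref{effros}) for the $G$-action on $P$. Since Theorem~\ref{disintegration} identifies $P$ homeomorphically with the Polish space $\LL^0_{p_\bullet}(U,\Prob_U(V))$, it suffices to show that for every $\mu \in P$ and every sequence $(g_n)$ in $G$ with $g_n\mu \to \mu$, one has $g_n \to e$ in $G/\Stab(\mu)$.

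I would first transport such a convergence through the equivariant disintegration homeomorphism of Theorem~\ref{disintegration}; because $G$ acts trivially on $U$, the induced $G$-action on sections is fiberwise, so $g_n\mu\to\mu$ reads as convergence in probability of $u\mapsto g_n\mu_u$ to $u\mapsto \mu_u$. By Lemma~\ref{convsubseq}, after passing to a subsequence (still called $(g_n)$), we have $g_n\mu_u \to \mu_u$ in $\Prob(p^{-1}(u))$ for $\nu$-a.e.\ $u$. For each such $u$, the hypothesis that $G\acts\Prob(p^{-1}(u))$ is almost algebraic combined with Theorem~\ref{effros} applied inside the fiber produces $g_n \to e$ in $G/\Stab(\mu_u)$.

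Setting $L = \Stab(\mu)$, Lemma~\ref{stab} provides a full-measure $U_1 \subset U$ such that $L = \bigcap_{u \in U_1}\Stab(\mu_u)$; I would shrink $U_1$ by a null set to ensure the preceding almost-everywhere statements hold for every $u \in U_1$. The key reduction is to replace $U_1$ by a countable subset: the Polish group $G$ is second countable, hence hereditarily Lindel\"of, so the open cover $G \setminus L = \bigcup_{u \in U_1}(G \setminus \Stab(\mu_u))$ admits a countable subcover. This yields a countable $I_\infty \subset U_1$ with $L = \bigcap_{u \in I_\infty}\Stab(\mu_u)$.

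Finally, I would apply Lemma~\ref{product} to the countable family $\{\Stab(\mu_u)\}_{u \in I_\infty}$ of almost algebraic subgroups: the diagonal $G$-action on $\prod_{u \in I_\infty} G/\Stab(\mu_u)$ is almost algebraic, and the stabilizer of the base point $(e\Stab(\mu_u))_{u \in I_\infty}$ is exactly $L$. Since $g_n \to e$ coordinatewise in this product, Theorem~\ref{effros} applied to this almost algebraic action forces $g_n \to e$ in $G/L$. A standard subsequence-of-subsequence argument then promotes this conclusion to the original (unextracted) sequence. The main obstacle is precisely this coordination step: the a.e.\ fiberwise Effros convergences in the various $G/\Stab(\mu_u)$ must be assembled into a single convergence in $G/\Stab(\mu)$, and hereditary Lindel\"ofness of $G$ --- letting the possibly uncountable intersection $\bigcap_{u \in U_1}\Stab(\mu_u)$ be realized as a countable one --- is exactly what enables Lemma~\ref{product} to supply an almost algebraic action whose base-point stabilizer is $L$ itself.
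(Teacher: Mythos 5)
Your proposal is correct and follows essentially the same route as the paper: disintegrate via Theorem~\ref{disintegration}, pass to an a.e.-convergent subsequence by Lemma~\ref{convsubseq}, apply the fiberwise almost algebraicity to get convergence in each $G/\Stab(\mu_u)$, reduce the intersection of stabilizers from Lemma~\ref{stab} to a countable one by second countability, and conclude with Lemma~\ref{product}. Your explicit Lindel\"of justification for the countable reduction and your appeal to Effros' theorem (rather than the paper's slightly misplaced citation of Proposition~\ref{polishing}) are minor improvements in exposition, not a different argument.
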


This proof is similar to the proof presented in \cite[Proof of Proposition 3.3.1]{zimmer-book}; see also \cite[Lemma 6.7]{Burger-Acampo}.

\begin{proof}
The set $P$ is Polish, as a closed subset of $\Prob(V)$.
By Theorem~\ref{effros} we need to show that the orbit maps are homeomorphisms. By Theorem~\ref{disintegration}, $P$ is equivariantly homeomorphic to $\LL^0_{p_\bullet}(U,\Prob_U(V))$.

Fixing $f\in \LL^0_{p_\bullet}(U,\Prob_U(V))$ and letting $g_n\in G$ be such that $g_nf \to f$, we will show that $g_n$ converges to the identity in $G/\Stab(f)$ by proving that every subsequence of $(g_n)$ has a sub-subsequence which converges to the
identity in $G/\Stab(f)$. 
Doing so, we are free to replace $(g_n)$ by any subsequence.
Relying on Lemma~\ref{convsubseq}, we replace $(g_n)$ by a subsequence such that $g_n f(u)\to f(u)$ for every $u$ in some $\nu$-full subset $U_0\subset U$. Let $U_1\subset U_0$ be a full measure subset such that the action of $G$ on $p^{-1}(u)$ is almost algebraic for every $u\in U_1$.

Let $u\in U_1$. By definition, we know that $f(u) \in \Prob(p^{-1}(u))$ and that the action of $G$ on $\Prob(p^{-1}(u))$ is almost algebraic.
By Proposition~\ref{polishing}, the orbit map $G/\Stab(f(u)) \to Gf(u)$ is a homeomorphism thus $g_nf(u)\to f(u)$ implies that $g_n$ converges to the identity in $G/ \Stab(f(u))$. 
By Lemma \ref{stab}, there is also a full measure subset $U_2$, that we may and do assume to be contained in $U_1$, such that 
$$\Stab(f)=\bigcap_{u\in U_2} \Stab(f(u))$$
and since $G$ is second countable, one can find $U_3$ countable in $U_2$ such that 
$$\Stab(f)=\bigcap_{u\in U_3} \Stab(f(u)).$$
By assumption, for every $u\in U_3$, the group $\Stab(f(u))$ is almost algebraic. Hence by Lemma~\ref{product}, the action of $G$ on $\prod_{u\in U_3} G/\Stab(f(u))$ is almost algebraic. In particular, we see that  $g_n$ converges to $e$ in $G/\Stab(f)$. 
\end{proof}

\begin{proof}[Proof of Proposition \ref{tame-orbits}]
Let $U=G\backslash V$ and $p\colon V\to U$ be the projection. 
Consider the $G$-invariant continuous map $p_*\colon\Prob(V)\to \Prob(U)$.
Clearly the fibers of $p_*$ are closed and $G$-invariant, so it is enough to prove that for a given $\nu \in \Prob(U)$, the quotient space  $G\backslash p_*^{-1}(\{\nu\})$ has a $T_0$ -topology. This is precisely  Lemma \ref{tame-sections}.
\end{proof}

Let $\pi:V\to V'$ be a continuous $G$-map between Polish spaces, $\mu\in\Prob(V)$ and $\nu=\pi_*\mu$. 
Then $\nu$ has a unique decomposition $\nu=\nu_c+\nu_d$ where $\nu_c$ and $\nu_d$ are the continuous and discrete parts of $\nu$. 
Moreover $\nu_d$ can be written $\sum_{\lambda\in\Lambda}\lambda\sum_{f\in F_\lambda} \delta_f$,
where $\Lambda=\{\lambda\in\bbR_+~|~ \exists u\in V',\ \pi_*\mu(\{u\})=\lambda\}$ and $F_\lambda=\{u\in V'~|~ \nu(\{u\})=\lambda\}$. 
Defining $\mu_\lambda$ to be the restriction of $\mu$ to $\pi^{-1}(F_\lambda)$ and $\mu_c=\mu-\sum_{\lambda\in\Lambda}\mu_\lambda$, 
we have a unique decomposition $\mu=\mu_c+\sum_{\lambda\in \Lambda} \mu_\lambda$, 
where $\pi_*(\mu_c)$ is non-atomic and each $\pi_*(\mu_\lambda)$ is a finitely supported, uniform measure of the form $\lambda\sum_{f\in F_\lambda} \delta_f$.

\begin{lemma} \label{decom}
Let $\pi:V\to V'$ be a continuous $G$-map between Polish spaces and $\mu\in\Prob(V)$. Using the above decomposition, we have
$\Stab(\mu)=\Stab(\mu_c)\cap\left(\bigcap_\lambda \Stab(\mu_\lambda)\right)$.
If $g_n\mu\to \mu$ then $g_n\mu_c\to \mu_c$ and for each $\lambda\in\Lambda$, $g_n\mu_\lambda\to \mu_\lambda$.
\end{lemma}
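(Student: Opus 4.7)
The stabilizer equality is the easier half. The inclusion $\supset$ is immediate from $\mu=\mu_c+\sum_\lambda\mu_\lambda$. For the reverse, if $g\mu=\mu$ then $g\nu=\nu$ (where $\nu=\pi_*\mu$), so $g$ permutes the atoms of $\nu$ preserving their weights, hence $gF_\lambda=F_\lambda$. By equivariance of $\pi$, $g\pi^{-1}(F_\lambda)=\pi^{-1}(F_\lambda)$, and therefore $g\mu_\lambda=(g\mu)|_{\pi^{-1}(F_\lambda)}=\mu_\lambda$; since $\mu_c=\mu-\sum_\lambda\mu_\lambda$, also $g\mu_c=\mu_c$.

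For the convergence statement, the plan is to extract weak limits along a subsequence, identify them, and show they coincide with $\mu_c$ and $\mu_\lambda$. Each $F_\lambda$ is finite ($|F_\lambda|\leq 1/\lambda$), so the atom set $A=\bigsqcup_\lambda F_\lambda$ is countable. The sequences $\{g_n\mu_c\}$ and $\{g_n\mu_\lambda\}$ are tight, being dominated by the (tight) sequence $\{g_n\mu\}$, so by Prokhorov (Theorem~\ref{Prok}) and a diagonal argument over $\Lambda$ and $A$ one extracts a subsequence along which $g_n\mu_c\to\alpha$ weakly, $g_n\mu_\lambda\to\beta_\lambda$ weakly for every $\lambda$, and $g_nf\to\Phi(f)\in V'$ for every $f\in A$. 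Using the tail bound $\sum_{\lambda\notin\Lambda_0}\|g_n\mu_\lambda\|=\sum_{\lambda\notin\Lambda_0}\|\mu_\lambda\|<\varepsilon$ for some finite $\Lambda_0\subset\Lambda$, one passes to the limit in $g_n\mu=g_n\mu_c+\sum_\lambda g_n\mu_\lambda$ to obtain $\mu=\alpha+\sum_\lambda\beta_\lambda$ with $\|\alpha\|=\|\mu_c\|$ and $\|\beta_\lambda\|=\|\mu_\lambda\|$.

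The heart of the proof is identifying these limits. Pushing forward, $\pi_*\beta_\lambda=\lambda\sum_{f\in F_\lambda}\delta_{\Phi(f)}$, and the Portmanteau theorem applied to small balls around each $\Phi(f)$ shows that $\Phi(f)$ is an atom of $\nu$ of weight at least $\nu(\{f\})$; in particular $\Phi(A)\subset A$. Comparing total mass on $A$ in the identity $\nu=\pi_*\alpha+\sum_\lambda\pi_*\beta_\lambda$ gives
\[ \pi_*\alpha(A)=\|\nu_d\|-\sum_\lambda\lambda|F_\lambda|=0, \]
hence $\pi_*\alpha$ has no atoms, and since $\nu$ is continuous off $A$ and $\|\pi_*\alpha\|=\|\nu_c\|$, one concludes $\pi_*\alpha=\nu_c$. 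Evaluating the identity at each $x\in A$ (using $\pi_*\alpha(\{x\})=0$) yields
\[ \nu(\{x\})=\sum_\lambda\lambda\,|\Phi^{-1}(x)\cap F_\lambda|. \]
A counting argument then forces $\Phi$ to be a bijection of $A$ preserving each $F_\lambda$: $|\Phi^{-1}(x)|\geq 1$ for every $x\in A$ (else $\nu(\{x\})=0$), while $\sum_{x\in A}|\Phi^{-1}(x)|=|A|$, so $|\Phi^{-1}(x)|=1$ for every $x$; the mass identity above then forces the unique preimage of $x$ to lie in $F_{\nu(\{x\})}$.

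Once $\Phi(F_\lambda)=F_\lambda$ for each $\lambda$, $\pi_*\beta_\lambda=\pi_*\mu_\lambda$. Since $\beta_\lambda\leq\mu$ is supported on $\pi^{-1}(F_\lambda)$, it is dominated by $\mu_\lambda$, and equal total masses force $\beta_\lambda=\mu_\lambda$; then $\alpha=\mu-\sum_\lambda\beta_\lambda=\mu_c$. Since every subsequence contains a sub-subsequence converging to these unique limits, the full sequences converge. The main obstacle is the counting step: a priori the weak limit could allow the atoms of $\pi_*\mu_\lambda$ to merge or to migrate into strata of larger weight, and it is only the combination of the cardinality identity $\sum_x|\Phi^{-1}(x)|=|A|$ with the non-negative weighted identity above that rules this out.
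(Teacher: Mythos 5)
Your overall strategy is sound and runs parallel to the paper's: both arguments extract subsequential limits of the translated atoms $g_nf$ (in a compactification), observe that a limit of atoms of weight $\lambda$ must be an atom of $\nu$ of weight at least $\lambda$, and then identify the limiting measures via the uniqueness of the decomposition. The stabilizer identity, the tightness/diagonal extraction, the identification $\pi_*\alpha=\nu_c$, and the final domination argument $\beta_\lambda\leq\mu_\lambda$ are all fine --- modulo the small point that before you can speak of $\Phi(f)$ you need $\{g_nf\}$ to be precompact \emph{in} $V'$; this follows from tightness of $\{g_n\nu\}$, since an atom of $g_n\nu$ of weight $\lambda$ cannot leave a compact set of co-mass less than $\lambda$ (the paper instead embeds $V'$ in a compact space and argues the limit point carries positive mass, hence lies in $V'$).

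The genuine gap is the counting step. The atom set $A$ is in general countably infinite (e.g.\ $\nu$ may have atoms of weights $c2^{-n}$ for all $n$), and for an infinite set the implication ``every fiber of $\Phi:A\to A$ is nonempty and $\sum_{x\in A}|\Phi^{-1}(x)|=|A|$, hence every fiber is a singleton'' is false: the map $\bbN\to\bbN$ sending $0,1\mapsto 0$ and $n\mapsto n-1$ for $n\geq 2$ has all fibers nonempty and total fiber count $|\bbN|$, yet is not injective. So you cannot conclude injectivity of $\Phi$ from cardinalities alone. The correct argument --- and the one the paper runs --- is a downward induction on $\Lambda$ in decreasing order, which has order type at most $\omega$ because $\sum_\lambda\lambda|F_\lambda|\leq 1$. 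Since $\Phi$ sends $F_\lambda$ into $\bigcup_{\lambda'\geq\lambda}F_{\lambda'}$, the top stratum $F_{\lambda_1}$ with $\lambda_1=\max\Lambda$ is mapped into itself; your mass identity at $x\in F_{\lambda_1}$ then forbids two preimages in $F_{\lambda_1}$ (they would contribute at least $2\lambda_1>\lambda_1$), so $\Phi$ permutes the finite set $F_{\lambda_1}$, and the same identity forces $\Phi^{-1}(F_{\lambda_1})=F_{\lambda_1}$. Passing to the next largest weight and repeating gives $\Phi(F_\lambda)=F_\lambda$ for every $\lambda$, after which your proof closes as written.
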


\begin{proof} The statement about $\Stab(\mu)$ is straightforward from the uniqueness of the decomposition of $\mu$. Let $(g_n)$ be a sequence such that $g_n\mu\to \mu$. Once again, we use a sub-subsequence argument: we prove that any subsequence of $(g_n)$ contains a sub-subsequence such that $g_n\mu_\lambda\to \mu_\lambda$ for every $\lambda$. Hence we start by replacing $(g_n)$ by an arbitrary subsequence.

Observe that $g_n\mu\to \mu$ implies $g_n\nu\to\nu$ because $\pi_*\colon\Prob(V)\to\Prob(V')$ is continuous.  
Let $K'$ be a compact metrizable space in which $V'$ is continuously embedded as a $G_\delta$-subset (see \cite[Theorem 4.14]{kechris}). 
Then $\Prob(V')$ embeds as a $G_\delta$-subset in $\Prob(K')$ as well \cite[Proof of Theorem 17.23]{kechris}. 
We begin with the following observation. 
Assume $\nu_n$ is a sequence of probability measures converging to $\nu\in\Prob(V')$ and $\nu_n$ decomposes as $\delta_{u_n}+\nu'_n$ with $u_n\in V'$ and $\nu'_n\in\Prob(V')$. 
Up to extraction $u_n$ converges to some $k\in K'$ and thus $\nu(\{k\})>0$ which implies that $k\in V'$. 

Let $\lambda_1$ be the maximum of $\Lambda$. 
The above observation implies that up to extraction we may assume that for any $f\in F_{\lambda_1}$, $g_nf$ converges to some $l(f)\in V'$. 
Since $g_n\nu\to\nu$, we have that $l(f)\in F_{\lambda_1}$ thus $g_n\nu_{\lambda_1}$ converges to $\nu_{\lambda_1}$, where $\nu_{\lambda_1}=\pi_*(\mu_{\lambda_1})$. An induction on $\Lambda$ (countable and well ordered with the reverse order of $\bbR$) shows that (after extraction) $g_n\nu_\lambda\to\nu_\lambda$ for any $\lambda\in\Lambda$. 

Once again, we embed $V$ in some compact metrizable space $K$. Fix $\lambda\in\Lambda$ and let $\mu'$ be an adherent point of $(g_n\mu_\lambda)$ in $\Prob(K)$. 
As $\pi$ is $G$-equivariant, we have that $\pi_*g_n\mu_\lambda=g_n\pi_*\mu_\lambda=g_n\nu_\lambda$ which converges to $\nu_\lambda$. Hence $\pi_*\mu'=\nu_\lambda$. Furthermore, we also see that $\mu'$ is supported on $\pi^{-1}(F_\lambda)$, hence $\mu'\in \Prob(V)$.

The same argument proves that $\mu-\mu'$, which is an adherent point of 
 $g_n(\mu-\mu_\lambda)$,  is supported on $V\setminus\pi^{-1}(F_\lambda)$.
 
 As $\mu$ can be written uniquely as a sum of a measure supported on $\pi^{-1}(F_\lambda)$ and a measure supported on $V\setminus\pi^{-1}(F_\lambda)$, we see,
  writing $\mu=(\mu-\mu')+\mu'=(\mu-\mu_\lambda)+\mu_\lambda$,
 that necessarily $\mu'=\mu_\lambda$. 
This concludes the proof since $\mu_c=\mu-\sum_{\lambda\in\Lambda}\mu_\lambda$.
\end{proof}

\begin{lemma}\label{lem:atom}
Let ${\bf H}<{\bf G}$ be a $k$-algebraic subgroup.
Set ${\bf N}=N_{\bf G}({\bf H})$, $H={\bf H}(k)$ and $N={\bf N}(k)$.
Let ${\bf V}={\bf G}/{\bf H}$, ${\bf V}'={\bf G}/{\bf N}$, $V={\bf V}(k)$ and $V'={\bf V}'(k)$.
Consider the map $\pi:V\to V'$.
Let $F\subset V'$ be a finite set, $\nu=1/|F|\sum_{f\in F} \delta_f$ and $\mu\in \Prob(V)$ be a measure with $\pi_*\mu=\nu$.
Let $(g_i)$ be a sequence with $g_i\mu\to\mu$. Then $g_i\to e\in G/\Stab(\mu)$.
\end{lemma}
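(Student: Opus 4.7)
The plan is to reduce the convergence, by successive reductions, to an application of Effros's theorem~\ref{effros} for the $K$-action on $P_F:=\{\mu'\in\Prob(V):\pi_*\mu'=\nu\}$, where $K:=\bigcap_{j}\Stab_G(f_j)$ is the pointwise stabilizer of $F$ in $G$.

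First I would push the convergence through $\pi_*$ to get $g_i\nu\to\nu$ in $\Prob(V')$. Since $\nu$ is finitely supported on $F$ and the $\bfG$-action on the ordered-tuple orbit $G\cdot(f_1,\ldots,f_n)$ in $(V')^n$ is algebraic, Proposition~\ref{polishing} gives that the orbit map is a homeomorphism and hence (after the routine argument separating the $f_j$'s by disjoint open neighbourhoods and extracting a subsequence) $g_i\to e$ in $G/\Stab(\nu)$. Lifting in the Polish quotient, we write $g_i=\ell_is_i$ with $\ell_i\to e$ in $G$ and $s_i\in\Stab(\nu)$; then $s_i\mu\to\mu$ by joint continuity. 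Since $\Stab(\nu)/K$ embeds in $\Sym(F)$ and is therefore finite, after passing to a subsequence we may assume $s_i=k_i\sigma$ for a fixed $\sigma\in\Stab(\nu)$ and $k_i\in K$.

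The next step is to establish that the $K$-action on $P_F$ is almost algebraic. Via the fiber identification $\pi^{-1}(f_j)\cong N/H$ with $f_j=x_jN$, the $K$-action on each $\Prob(\pi^{-1}(f_j))$ factors through the algebraic subgroup $x_j^{-1}Kx_j<N$. Lemma~\ref{prob(G/N)} (applied to $\bfH\lhd\bfN$) asserts that the $N$-action on $\Prob((\bfN/\bfH)(k))$ is almost algebraic, and an application of Example~\ref{aaaex} on each $N$-orbit, together with separation of distinct $N$-orbits by $N$-invariant open sets, yields almost algebraicity of the $K$-action on each $\Prob(\pi^{-1}(f_j))$. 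Lemma~\ref{tame-sections} applied to the trivial-base Polish fibration $\pi^{-1}(F)\to F$ then gives that $K\backslash P_F$ is $T_0$; combined with Theorem~\ref{almost-alg} and Lemma~\ref{intalm} for the stabilizers, this establishes almost algebraicity of the $K$-action on $P_F$.

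The hard part will be to show $\sigma\in K\cdot\Stab(\mu)$. From $k_i\sigma\mu\to\mu$ we have $\mu\in\overline{K\sigma\mu}$ and hence $\overline{K\mu}\subset\overline{K\sigma\mu}$. Using the normality $K\lhd\Stab(\nu)$, applying the homeomorphism $\sigma^l$ yields $\overline{K\sigma^l\mu}\subset\overline{K\sigma^{l+1}\mu}$ for each $l\geq 0$; picking $m\geq 1$ with $\sigma^m\in K$ (possible by finiteness of $\Stab(\nu)/K$), we have $K\sigma^m\mu=K\mu$, so the inclusions close into a cycle
\[\overline{K\mu}\subset\overline{K\sigma\mu}\subset\cdots\subset\overline{K\sigma^m\mu}=\overline{K\mu}\]
that collapses to the equality $\overline{K\mu}=\overline{K\sigma\mu}$. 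Since $K\backslash P_F$ is $T_0$, two distinct orbits cannot have equal closures, so $K\mu=K\sigma\mu$; hence $\sigma\mu=k^\ast\mu$ for some $k^\ast\in K$ and thus $\sigma=k''t$ with $k''\in K$ and $t\in\Stab(\mu)$. Then $s_i\mu=k_ik''\mu\to\mu$, so Effros's theorem for the almost algebraic $K$-action on $P_F$ gives $k_ik''\to e$ in $K/\Stab_K(\mu)$; via the continuous injection $K/\Stab_K(\mu)\hookrightarrow G/\Stab(\mu)$ and combining with $\ell_i\to e$ in $G$, we conclude $g_i\to e$ in $G/\Stab(\mu)$.
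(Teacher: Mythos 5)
Your proof is essentially sound but takes a genuinely different route from the paper's. The paper also begins by extracting a permutation $\sigma$ with $g_iF'\to\sigma(F')$ and an element $g\in\Stab(F)$ realizing it, but then decomposes $\mu$ into its fiber components $\mu_f$ and tracks each one separately: it produces correctors $n_i\in\bigcap_{f\in F}\Stab(f)$ with $n_i\mu_f\to g^{-1}\mu_{\sigma(f)}$, applies Lemma~\ref{prob(G/N)} fiber by fiber to get convergence of $n_i$ in each $\Stab(f)/\Stab(\mu_f)$, and finishes by noting that a limit $h$ of $g_i$ in $G/\Stab(\mu)$ must satisfy $h\mu=\mu$. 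You instead keep $\mu$ whole, work in the fiber space $P_F$ over the finite base $F$, invoke the disintegration machinery (Lemma~\ref{tame-sections}) to get that $K\backslash P_F$ is $T_0$, and dispose of the permutation ambiguity via the orbit-closure cycle $\overline{K\mu}\subset\overline{K\sigma\mu}\subset\cdots\subset\overline{K\sigma^m\mu}=\overline{K\mu}$ together with the fact that in a $T_0$ quotient distinct orbits cannot have equal closures. This is a clean and arguably more transparent treatment of precisely the point where the paper merely asserts that $n_i\mu_f\to g^{-1}\mu_{\sigma(f)}$ forces convergence of $n_i$ modulo $\Stab(\mu_f)$.

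There is, however, one real gap as written: the assertion that $g_i\to e$ in $G/\Stab(\nu)$. The ``routine argument'' you invoke only yields, after passing to a subsequence, that $g_i(f_1,\dots,f_m)\to(f_{\tau(1)},\dots,f_{\tau(m)})$ for some permutation $\tau$; this limit is a priori only in the \emph{closure} of the orbit $G\cdot(f_1,\dots,f_m)$, so the orbit-map homeomorphism of Proposition~\ref{polishing} does not yet apply. You must first show that the permuted tuple lies in the orbit itself. This is exactly the paper's chain $\overline{GF'}\supset\overline{G\tau(F')}\supset\cdots\supset\overline{G\tau^{n}(F')}=\overline{GF'}$ combined with local closedness of orbits --- the same cycle trick you use later for measures --- or, alternatively, one can pass to the symmetric power $({\bf V}')^{m}/\Sym(m)$, where the unordered set $F$ is a single point of a $k$-variety and Proposition~\ref{polishing} gives $g_i\to e$ in $G/\Stab(F)$ directly (this is in fact the paper's opening move). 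Without this insertion the decomposition $g_i=\ell_is_i$ with $s_i\in\Stab(\nu)$, on which everything afterwards rests, is not available. Once it is supplied, your argument goes through; you should also close with the standard remark that, having repeatedly passed to subsequences, it suffices that every subsequence of $(g_i)$ admit a further subsequence converging to $e$ in the Polish space $G/\Stab(\mu)$.
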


\begin{proof}
Denote $m=|F|$. 
We know that $({\bf V'})^m/\Sym(m)$ is an algebraic variety,
hence by Proposition~\ref{polishing}, every $G$-orbit in $(V')^m/\Sym(m)$ is locally closed.
It follows in particular that $g_i\to e$ in $G/\Stab(F)$.

 Again, it is enough to show that every subsequence of $(g_i)$ contains a subsequence which tends to $e$ modulo $\Stab(\mu)$. We start by extracting an arbitrary subsequence of $(g_i)$.

 Let us number $f_1,f_2,\dots,f_m$ the elements of $F$ and denote $F'=(f_1,\dots, f_m)\in (V')^m$. 
 Since $g_i$ converges to $e$ in $G/\Stab(F)$, it follows that, passing to a subsequence, there exists $\sigma\in\Sym(m)$ such that $g_iF'$ tends to $\sigma(F')=(f_{\sigma(1)},\dots, f_{\sigma(m)})$.
 This means $\overline{GF'}\supset G\sigma(F')$ 
 and thus $\overline{GF'}\supset \overline{G\sigma(F')}\supset\dots\supset\overline{G\sigma^n(F')
 }=\overline{GF'}$ for some $n\in\bbN$.
  In particular $\overline{GF'}=\overline{G\sigma(F')}$ and since orbits are locally closed we have that $GF'=G\sigma(F')$.
  
This shows that there exists $g\in \Stab(F)$ such that $gF'=\sigma(F')$. 
Hence we have $g_i F'\to gF'$, and by almost algebraicity of the action on $(V')^m$ it follows that $g_i$ tends to $g$ modulo $\Stab(F')=\bigcap_{f\in F}\Stab(f)$.

Let us fix some notations. For $f\in F$ we denote by $\mu_f$ the restriction of $\mu$ to ${\pi}^{-1}(\{f\})$ and fix $\overline f\in \bf G$ such that $\overline f\mathbf{N}=f$ and denote by $\mathbf{H}_f\leq \bf G$ the conjugate of $\bf H$ by $\overline f$. Observe that $\overline f \mathbf{N}\overline{f}^{-1}=\Stab_{\mathbf{G}}(f)$,  $\mathbf{H}_f\lhd \Stab_{\mathbf{G}}(f)$ and $\boldsymbol{\pi}^{-1}(\{f\})\simeq \Stab_{\mathbf{G}}(f)/\mathbf{H}_f$ where $\boldsymbol{\pi}\colon\bf G/H\to G/N$ is the projection and $\Stab_\mathbf{G}(f)$ is the stabilizer of $f$ under the action of $\bf G$ on $\bf G/N$. We also denote $\mu_f'=g^{-1}\mu_{\sigma(f)}$ and $g'_i=g^{-1}g_i$.
Since $g'_i\to e\in G/\bigcap_{f\in F}\Stab(f)$ there exist $n_i\in \bigcap_{f\in F}\Stab(f)$ such that $g'_in_i^{-1}$ converges to $e$ (in $G$). We observe that $n_i\mu_f = n_i (g'_i)^{-1}g'_i \mu_f$. 
As $g'_i\mu_f$ tends to $\mu'_f$ and $n_i (g'_i)^{-1}$ tends to $e$, we have that $n_i\mu_f$ converges to $\mu'_f$. 

Those measures are supported on $\pi^{-1}(\{f\})\simeq\left(\Stab_{\mathbf{G}}(f)/\mathbf{H}_f\right)(k)$.
By Lemma \ref{prob(G/N)}, $\Stab(f)$ acts almost algebraically on $\Prob\left(\left(\Stab_{\mathbf{G}}(f)/\mathbf{H}_f\right)(k)\right)$. So we have that $n_i$ tends to some $n$ in $\Stab(f)/\Stab(\mu_f)$.

We conclude that $g'_i=g'_in_i^{-1}n_i$ tends to $n$ in $G/\Stab(\mu_f)$. Arguing similarly for every $f$, it follows that $g_i$ tends to $gn$ in $G/\bigcap_{f\in F}\Stab(\mu_f)$. Hence $(g_i)$ converges also in $G/\Stab(\mu)$, since $\bigcap_{f\in F}\Stab(\mu_f)\leq\Stab(\mu)$. Let $h$ be the limit point of $(g_i)$ modulo $\Stab(\mu)$. Then we have that $g_i\mu$ converges to $h\mu$ by continuity of the action. Hence $h\in\Stab(\mu)$, meaning that $h=e$ modulo $\Stab(\mu)$. In other words, $g_i$ converges to $e$ in $G/\Stab(\mu)$.
%
\end{proof}

\begin{prop} \label{aaaa}
Let ${\bf H}<{\bf G}$ be a $k$-algebraic subgroup and set $H={\bf H}(k)$.
Then the action of $G$ on $\Prob(G/H)$ is almost algebraic.
\end{prop}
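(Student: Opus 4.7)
The plan is to verify the two defining conditions of an almost algebraic action for $G \curvearrowright \Prob(V)$, where $V = G/H$. Since $G \curvearrowright V$ is itself almost algebraic by Proposition~\ref{polishing}, Theorem~\ref{almost-alg} immediately supplies the first condition: every stabilizer $\Stab(\mu)$ of $\mu \in \Prob(V)$ is almost algebraic in $G$. What remains is to establish that $G \backslash \Prob(V)$ is $T_0$; by Remark~\ref{remarkeffros} this is equivalent to proving that if $g_i \in G$ and $\mu \in \Prob(V)$ satisfy $g_i\mu \to \mu$, then $g_i \to e$ in $G/\Stab(\mu)$.

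The crucial reduction tool is the $G$-equivariant continuous projection $\pi : V = G/H \to V' = G/N$, where $\mathbf{N} := N_\mathbf{G}(\mathbf{H})$ is again $k$-algebraic and $N = \mathbf{N}(k)$. Applying Lemma~\ref{decom} to this map, one obtains a decomposition $\mu = \mu_c + \sum_{\lambda \in \Lambda} \mu_\lambda$ with $\Lambda$ countable, in which each $\pi_*\mu_\lambda = \lambda \sum_{f \in F_\lambda} \delta_f$ is uniform on a finite set $F_\lambda \subset V'$ and $\pi_*\mu_c$ is non-atomic; the same lemma guarantees $g_i\mu_\lambda \to \mu_\lambda$ and $g_i\mu_c \to \mu_c$ for every $\lambda$. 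For each atomic summand, Lemma~\ref{lem:atom} (applied after normalization) yields $g_i \to e$ in $G/\Stab(\mu_\lambda)$.

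The main expected obstacle is the continuous summand $\mu_c$, since $\pi_*\mu_c$ is non-atomic and Lemma~\ref{lem:atom} no longer applies. My approach here is an induction on an algebraic complexity of $\mathbf{H}$---for instance on $\dim \mathbf{G} - \dim \mathbf{H}$, refined if necessary through the iterated normalizer chain $\mathbf{H} \subset N_\mathbf{G}(\mathbf{H}) \subset N_\mathbf{G}(N_\mathbf{G}(\mathbf{H})) \subset \cdots$, which stabilizes by Noetherianity. The base case is when $\mathbf{H}$ is normal in $\mathbf{G}$: then $V'$ is a point, $\mu_c$ vanishes, and the conclusion is precisely Lemma~\ref{prob(G/N)}. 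When $\mathbf{N} \supsetneq \mathbf{H}$, one assumes inductively that $G \curvearrowright \Prob(G/N)$ is almost algebraic, extracting $g_i \to e$ in $G/\Stab(\pi_*\mu_c)$. Together with the disintegration of $\mu_c$ over $\pi_*\mu_c$ furnished by Theorem~\ref{disintegration}, Lemma~\ref{stab}, and the almost algebraic action of $N$ on $\Prob(N/H)$ realized on each fiber $\pi^{-1}(v') \simeq N/H$ (via Lemma~\ref{prob(G/N)} applied to the normal pair $\mathbf{H} \triangleleft \mathbf{N}$), this refines to $g_i \to e$ in $G/\Stab(\mu_c)$.

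Finally, the separate convergences $g_i \to e$ modulo the various $\Stab(\mu_\lambda)$ and $\Stab(\mu_c)$ are assembled via Lemma~\ref{product}: the diagonal $G$-action on $\prod_{\lambda \in \Lambda} G/\Stab(\mu_\lambda) \times G/\Stab(\mu_c)$ is almost algebraic, in particular its orbit space is $T_0$, so the componentwise convergence at the basepoint upgrades to $g_i \to e$ in $G / \bigl(\Stab(\mu_c) \cap \bigcap_\lambda \Stab(\mu_\lambda)\bigr)$, which by Lemma~\ref{decom} coincides with $G/\Stab(\mu)$, finishing the proof.
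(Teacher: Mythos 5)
Your reduction of the problem to the $T_0$ condition, the use of $\pi:G/H\to G/N$ with Lemma~\ref{decom}, the treatment of the atomic pieces via Lemma~\ref{lem:atom}, and the final reassembly via Lemma~\ref{product} all match the paper. The gap is in your treatment of the continuous part $\mu_c$, and it is a real one, in two respects. First, your induction has no base to stand on when $\bf H$ is self-normalizing but not normal (e.g.\ a parabolic subgroup): then ${\bf N}={\bf H}$, $\pi$ is the identity, $\pi_*\mu_c$ is just the non-atomic part of $\mu$ itself, and the inductive hypothesis for $\Prob(G/N)=\Prob(G/H)$ is exactly the statement you are trying to prove. The normalizer chain ${\bf H}\subset N_{\bf G}({\bf H})\subset\cdots$ stabilizes at a self-normalizing subgroup which need not be $\bf G$, so ``refining'' the induction along it does not rescue you; the case distinction ``normal'' versus ``${\bf N}\supsetneq{\bf H}$'' simply does not exhaust the possibilities.

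Second, even when ${\bf N}\supsetneq{\bf H}$, the step that upgrades $g_i\to e$ in $G/\Stab(\pi_*\mu_c)$ to $g_i\to e$ in $G/\Stab(\mu_c)$ is not supplied by the tools you cite. Theorem~\ref{disintegration}, Lemma~\ref{stab} and Lemma~\ref{tame-sections} all operate over a base on which $G$ acts \emph{trivially} (in Proposition~\ref{tame-orbits} the base is the orbit space $G\backslash V$), whereas here $G$, and even $\Stab(\pi_*\mu_c)$, permutes the fibers of $\pi$ nontrivially. Combining convergence on the base with fiberwise almost algebraicity is precisely the delicate point: Lemma~\ref{lem:atom} manages it only for a finite set of atoms, via a subsequence-and-permutation argument that has no analogue over a non-atomic base. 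The paper avoids this entirely by a different device: it takes $\bf H$ Noetherian-minimal among subgroups for which the claim fails, passes to $\mu_c\times\mu_c$ supported on the off-diagonal set ${\bf U}=\{(x{\bf H},y{\bf H})\mid y^{-1}x\notin{\bf N}\}$ (full for $\mu_c\times\mu_c$ since $\pi_*\mu_c$ is atom-free), observes that point stabilizers there are the strictly smaller groups $H^x\cap H^y$ so that minimality applies to each orbit, and then invokes Proposition~\ref{tame-orbits}, whose fibration is over the orbit space and hence has trivial base action. You would need either to reproduce that argument or to prove a genuinely new fibered statement with nontrivial base action; as written, the continuous part is not handled.
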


\begin{proof}
Assume the proposition fails for an algebraic subgroup ${\bf H}$.
We also assume, as we may, that ${\bf H}$ is minimal in the collection of $k$-subgroup of ${\bf G}$ with the property that the $G$-action on $\Prob(G/H)$ is not almost algebraic.
By Theorem~\ref{almost-alg}, $G$ acts on $\Prob(G/H)$ with almost algebraic stabilizers. Hence we have to show that for every measure $\mu\in \Prob(G/H)$ and sequence $g_n$ with $g_n\mu\to\mu$ then $g_n$  tends to $e$ in $G/\Stab(\mu)$ (Remark \ref{remarkeffros}).
We fix such a measure $\mu$ and a sequence $g_n$.
We will achieve a contradiction by showing that $g_n$ does tend to $e$ in $G/\Stab(\mu)$.

We set ${\bf N}=N_{\bf G}({\bf H})$, $N={\bf N}(k)$, ${\bf V}={\bf G}/{\bf H}$, ${\bf V}'={\bf G}/{\bf N}$, $V={\bf V}(k)$ and $V'={\bf V}'(k)$.
We consider the natural inclusion $G/H \subset V$ and view $\mu$ as a measure on $V$.
We consider the projection map $\pi\colon V\to V'$ and set $\nu=\pi_*\mu$.
We use the notation introduced in the discussion before Lemma~\ref{decom}. 
The lemma gives: 
$\Stab(\mu)=\Stab(\mu_c)\cap\left(\bigcap_{\lambda\in\Lambda} \Stab(\mu_\lambda)\right)$ where $\Lambda$ is countable subset of $[0,1]$,
$g_n\mu_c\to \mu_c$ and for each $\lambda\in\Lambda$, $g_n\mu_\lambda\to \mu_\lambda$.
By Lemma~\ref{lem:atom}, for each $\lambda\in\Lambda$,  $g_i\to e\in G/\Stab(\mu_\lambda)$.
Assume given also that $g_n\to e\in G/\Stab(\mu_c)$.
Since by Theorem~\ref{almost-alg} the groups $\Stab(\mu_\lambda)$ and $\Stab(\mu_c)$ are almost algebraic, we will get by Lemma~\ref{product}
that the action of $G$ on $G/\Stab(\mu_c)\times \prod_{\lambda}G/\Stab(\mu_\lambda)$ is almost algebraic. Hence,
\[ g_n\to e \in \left.G\middle/\left(\Stab(\mu_c)\cap\left(\bigcap_\lambda \Stab(\mu_\lambda)\right)\right)\right.=G/\Stab(\mu),\] 
achieving our desired contradiction.
We are thus left to show that indeed $g_n\to e\in G/\Stab(\mu_c)$.

For the rest of the proof we will assume as we may $\mu=\mu_c$, that is $\nu\in \Prob(V')$ is atom-free.
We consider the measure $\mu\times \mu \in \Prob(V\times V)$
and the subset 
\[{\bf U}=\{(x{\bf H},y{\bf H})~|~y^{-1}x\notin {\bf N}\} \subset {\bf G}/{\bf H}\times {\bf G}/{\bf H} ={\bf V}\times {\bf V}\]
defined and discussed in the proof of Proposition~\ref{prop:stab}. We set $U={\bf U}(k)$.
Note that the diagonal in $V'\times V'$ is $\nu\times\nu$-null as $\nu$ is atom-free,
thus $U$ is $\mu\times\mu$-full.
We view as we may $\mu\times \mu$ as a probability measure on $U$.

We now consider the $G$-action on $U$ and claim that the $G$-orbits are locally closed and for every $u\in U$, $G$ acts almost algebraically on $\Prob(Gu)$.
The fact that the $G$-orbits are locally closed follows from Proposition~\ref{polishing}, as ${\bf U}$ is a $k$-subvariety of ${\bf V}$.
Fix now a point $u=(x\bfH,y\bfH) \in U$ for some $x,y\in G$, and consider the $G$-action on $\Prob(Gu)$.
By the definition of ${\bf U}$, ${\bf H}\cap {\bf H}^{y^{-1}x} \lneq {\bf H}$, thus by the minimality of ${\bf H}$ the $G$-action on $\Prob(G/H\cap H^{y^{-1}x})\simeq \Prob(G/H^x\cap H^y)$ is almost algebraic.
Since by Proposition~\ref{polishing} $G/H^x\cap H^y$ is equivariantly homeomorphic to $Gu$ we conclude that indeed, $G$ acts almost algebraically on $\Prob(Gu)$, and the claim is proved.

By Proposition~\ref{tame-orbits}, we conclude that $G$ acts on $\Prob(U)$ almost algebraically.
Hence Effros' Theorem~\ref{effros} implies that $g_n\to e$ in $G/\Stab(\mu\times\mu)$ as $g_n(\mu\times\mu)\to \mu\times \mu$.
Observing that $\Stab(\mu\times\mu)=\Stab(\mu)$, the proof is complete.
\end{proof}

\begin{proof}[Proof of Theorem~\ref{tameomog}]
By Theorem~\ref{almost-alg} we know that the point stabilizers in $\Prob(G/L)$ are almost algebraic.
We are left to show that for every $\mu \in \Prob(G/L)$, for every sequence $g_n \in G$ satisfying $g_n\mu \to \mu$ we have 
 $g_n\to e$ modulo $\Stab(\mu)$
(see Remark~\ref{remarkeffros}).
Fix $\mu \in \Prob(G/L)$ and a sequence $g_n \in G$ satisfying $g_n\mu \to \mu$.
Let ${\bf H}<{\bf G}$ be a $k$-algebraic subgroup with $H=\bfH(k)$ normal and cocompact in $L$,
and recall that by Lemma~\ref{almHaar} we can find 
a $G$-equivariant continous map $\phi:\Prob(G/L)\to \Prob(G/H)$ such that $\Stab(\mu)=\Stab(\phi(\mu))$.
We get that $g_n\phi(\mu)\to \phi(\mu)$.
By Proposition~\ref{aaaa}, the $G$-action on $\Prob(G/H)$ is almost algebraic, thus $g_n\to e$ modulo $\Stab(\phi(\mu))$.
This finishes the proof, as $\Stab(\mu)=\Stab(\phi(\mu))$.
\end{proof}

\subsection{Proof of Theorem~\ref{mainthm}} \label{proofof}

For the convenience of the reader we restate Theorem~\ref{mainthm}.

\begin{theorem} 
If the action of $G$ on $V$ is almost algebraic then the action of $G$ on $\Prob(V)$ is almost algebraic as well.
\end{theorem}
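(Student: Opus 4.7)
The plan is to combine the two halves already assembled in \S\ref{sec:almalg} and \S\ref{sec:T0}. By Definition~\ref{aag-aaa}, there are two things to verify: that every point stabilizer in $\Prob(V)$ is almost algebraic, and that the quotient topology on $G\backslash \Prob(V)$ is $T_0$. The first is exactly the content of Theorem~\ref{almost-alg}, which applies directly once we note that the hypothesis of that theorem (almost algebraic action of $G$ on $V$) is what we are given.

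For the $T_0$ statement, I would invoke Proposition~\ref{tame-orbits}, whose two hypotheses are: (i) that $G\backslash V$ is $T_0$, which holds by assumption, and (ii) that for every $v\in V$, the action of $G$ on $\Prob(Gv)$ is almost algebraic. So the only step to verify is (ii).

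Fix $v\in V$. Since $G$ acts almost algebraically on $V$, the stabilizer $L=\Stab_G(v)$ is an almost algebraic subgroup of $G$, and by Effros' theorem (Theorem~\ref{effros}) the orbit map $G/L\to Gv$ is a $G$-equivariant homeomorphism. This homeomorphism induces a $G$-equivariant homeomorphism $\Prob(G/L)\to \Prob(Gv)$ (since the weak*-topology is functorial with respect to continuous equivariant maps, as is visible from Lemma~\ref{contprob}). Now Theorem~\ref{tameomog} asserts precisely that the $G$-action on $\Prob(G/L)$ is almost algebraic whenever $L<G$ is almost algebraic, which establishes (ii).

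Putting the pieces together: Theorem~\ref{almost-alg} yields almost algebraic stabilizers on $\Prob(V)$, and Proposition~\ref{tame-orbits}, applied with the hypothesis (ii) supplied by Theorem~\ref{tameomog} via the orbitwise identification above, yields the $T_0$ property of $G\backslash \Prob(V)$. Together these are exactly the definition of an almost algebraic action, completing the proof. There is no new obstacle at this stage; all the technical work has been done in the preceding subsections (the key inputs are the stabilizer analysis in \S\ref{sec:almalg}, and the separation of orbits for coset spaces $\Prob(G/L)$ in \S\ref{sec:T0}, the latter depending in turn on Lemma~\ref{lem:atom} and the fibration technique of Proposition~\ref{tame-orbits}).
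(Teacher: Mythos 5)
Your proposal is correct and follows essentially the same route as the paper's own proof: stabilizers via Theorem~\ref{almost-alg}, and the $T_0$ property via Proposition~\ref{tame-orbits}, with hypothesis (ii) supplied by the Effros identification $Gv\simeq G/\Stab_G(v)$ and Theorem~\ref{tameomog}. No gaps.
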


\begin{proof}
By Theorem~\ref{almost-alg}, we know that the $G$-stabilizers in $\Prob(V)$ are almost algebraic.
We need to show that the quotient topology on $G\backslash \Prob(V)$ is $T_0$.
By Proposition~\ref{tame-orbits}, it is enough to check that the quotient topology on $G\backslash V$ is $T_0$,
which is guaranteed by the assumption that the action of $G$ on $V$ is almost algebraic, and, as we will see, that 
for any $v\in V$, the action of $G$ on $\Prob(Gv)$ is almost algebraic.
We note that by Effros theorem (Theorem~\ref{effros}), the orbit $Gv$ is equivariantly homeomorphic to the coset space $G/\Stab_G(v)$,
and thus $\Prob(Gv)\simeq \Prob(G/\Stab_G(v))$.
Since $\Stab_G(v)$ is an almost algebraic subgroup of $G$, the fact that the $G$-action on $\Prob(Gv)$ 
is almost algebraic now follows from Theorem~\ref{tameomog}.
\end{proof}

\section{On bounded subgroups}

In this section, we essentially retain the setup~\ref{setup} \& \ref{setupG}: we fix a complete $(k,|\cdot|)$ valued field and a $k$-algebraic group $\bfG$.
Nevertheless there is no need for us to assume that $(k,|\cdot|)$ is separable, so we will refrain from doing so.


\begin{definition}
A subset of $k$ is called bounded if its image under $|\cdot|$ is bounded in $\bbR$.
For a $k$-variety $\bf{V}$, a subset of ${\bf V}(k)$ is called bounded if its image by any regular function is bounded in $k$. 
\end{definition}


\begin{remark}
Note that the collection of bounded sets on a $k$-variety forms a bornology.
\end{remark}

\begin{remark}
For a $k$-variety $\bf{V}$ it is clear that a subset of ${\bf V}(k)$ is bounded if and only if its intersection with every $k$-affine open set is bounded,
so in what follows we will lose nothing by considering exclusively $k$-affine varieties.
We will do so.
\end{remark}

\begin{remark}
Note that if $(k',|\cdot|')$ is a field extension of $k$ endowed with an absolute value extension of $|\cdot|$
and ${\bf V}$ is a $k$-variety, we may regard ${\bf V}(k)$ as a subset ${\bf V}(k')$
and, as one easily checks, a subset of ${\bf V}(k)$ is $k$-bounded if and only if it is $k'$-bounded.
Thus it causes no loss of generality assuming $k$ is algebraically closed since $\widehat{k}$ is so. Nevertheless, we will not assume that.
\end{remark} 

It is clear that every $k$-regular morphism of $k$-varieties is a bounded
map in the sense that the image of a bounded set is bounded.
For a $k$-closed immersion of $k$-varieties $f:\bf{U}\to \bf{V}$ also the converse is true:
a subset of ${\bf U}(k)$ is bounded if and only if its image is bounded,
as $f^*:k[{\bf V}]\to k[{\bf U}]$ is surjective.
This is a special case of the following lemma.

\begin{lemma} \label{finite morphisms}
For a finite $k$-morphism $f:\bf{U}\to \bf{V}$
a subset of ${\bf U}(k)$ is bounded if and only if its image is bounded.
\end{lemma}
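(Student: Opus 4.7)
The plan is to split the equivalence into its two directions, only one of which is substantive. The forward direction — bounded source implies bounded image — follows immediately from functoriality: for any $h \in k[{\bf V}]$, the pullback $f^*h$ lies in $k[{\bf U}]$, and $h(f(a)) = (f^*h)(a)$, so the image $|h(f(A))|$ is bounded whenever $A$ is bounded, by hypothesis applied to the regular function $f^*h$. This needs only that $f$ be a morphism, not finite.

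For the converse, which is the real content, I would exploit the fact that a finite morphism is integral: the condition that $f$ is finite means precisely that $f^* \colon k[{\bf V}] \to k[{\bf U}]$ makes $k[{\bf U}]$ into a finitely generated $k[{\bf V}]$-module, so in particular every $g \in k[{\bf U}]$ is integral over $f^*(k[{\bf V}])$. Fixing an arbitrary $g \in k[{\bf U}]$, pick a monic relation
\[ g^n + (f^*a_{n-1})\, g^{n-1} + \cdots + (f^*a_1)\, g + f^*a_0 = 0 \]
with $a_i \in k[{\bf V}]$. Evaluating at an arbitrary $u \in A$ gives
\[ g(u)^n = -\sum_{i=0}^{n-1} a_i(f(u))\, g(u)^i. \]
Since $f(A)$ is bounded by assumption, there is some $M \geq 1$ with $|a_i(f(u))| \leq M$ for all $u \in A$ and all $i$.

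The conclusion is then the elementary observation that any $x \in k$ satisfying $|x|^n \leq \sum_{i<n} M |x|^i$ must satisfy $|x| \leq nM$: indeed, if $|x| \geq 1$ we divide through by $|x|^{n-1}$ and use the triangle inequality to get $|x| \leq \sum_{i=0}^{n-1} |a_i(f(u))| \leq nM$, and otherwise $|x|<1 \leq nM$ trivially. Applying this to $x = g(u)$ shows $|g(A)|$ is bounded. Since $g$ was an arbitrary regular function on ${\bf U}$, this proves $A$ is bounded. The only potential obstacle is making sure the elementary bounding step is insensitive to whether $|\cdot|$ is archimedean or ultrametric, but the triangle inequality is all that is used, so this is automatic.
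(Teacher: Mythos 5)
Your proof is correct and takes essentially the same approach as the paper: both directions are handled the same way, with the substantive converse resting on the integrality of $k[{\bf U}]$ over $f^*k[{\bf V}]$ and a bound extracted from the monic relation via the triangle inequality. The only difference is cosmetic --- the paper argues by contradiction with a sequence $|p(u_n)|\to\infty$ and divides the relation by $p^m$, whereas you give the direct quantitative bound $|g(u)|\leq nM$.
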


\begin{proof}
Assume there exists an unbounded set $L$ in ${\bf U}(k)$ with 
$f(L)$ being bounded in ${\bf V}(k)$.
Then we could find $p\in k[{\bf U}]$
and a sequence $u_n \in L$ with $|p(u_n)|\to\infty$.
The function $p$ is integral over $f^*k[{\bf V}]$
so there exist $q_1,\ldots q_m\in f^*k[{\bf V}]$
with $p^m+\sum_{i=1}^m q_ip^{m-i} = 0$.
Thus,
\[ 1 = \left|\sum_{i=1}^m \frac{q_i(u_n)}{p^i(u_n)}\right| \leq \sum_{i=1}^m \frac{|q_i(u_n)|}{|p^i(u_n)|} \to 0, \]
as the sequences $q_i(u_n)$ are uniformly bounded. This is a contradiction.
\end{proof}

Recall that a seminorm on a $k$-vector space $E$ is a function $\|\cdot\|:E\to [0,\infty)$ satisfying
\begin{enumerate}
\item $\|\alpha v\|=|\alpha|\|v\|$, for $\alpha\in k$, $v\in E$.
\item $\|u+v\|\leq \|u\|+\|v\|$, for $u,v\in E$.
\end{enumerate}

A seminorm on $E$ is a norm if furthermore we have
\begin{enumerate}[resume]
\item  $\|v\|=0~\Leftrightarrow~v=0$, for $v\in E$.
\end{enumerate}

Two norms on a vector space, $\|\cdot\|,\|\cdot\|'$, are called equivalent if there exists some $C \geq 1$ such that
\[ C^{-1}\|\cdot\| \leq \|\cdot\|' \leq C\|\cdot\|. \]

It is a general fact that any linear map between two Hausdorff topological $(k,|\cdot|)$-vector spaces of finite dimensions is continuous \cite[I, \S 2,3 Corollary 2]{BBK-EVT} and thus we get easily the following.

\begin{theorem}\label{thm:equiv}
All the norms on a finite dimensional $k$-vector space are equivalent.
\end{theorem}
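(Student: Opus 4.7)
The plan is to deduce Theorem~\ref{thm:equiv} as an immediate corollary of the Bourbaki fact recalled just above the statement: any $k$-linear map between Hausdorff topological $(k,|\cdot|)$-vector spaces of finite dimension is continuous. Given two norms $\|\cdot\|_1$ and $\|\cdot\|_2$ on a finite-dimensional $k$-vector space $E$, the first step is to observe that each norm equips $E$ with a Hausdorff topological $k$-vector space structure. Hausdorffness is axiom (3), continuity of addition follows from the triangle inequality, and continuity of scalar multiplication follows from the homogeneity $\|\alpha v\|=|\alpha|\|v\|$ together with continuity of $|\cdot|$ on $k$.

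Applying the Bourbaki fact to the identity map $\mathrm{id}:(E,\|\cdot\|_1)\to(E,\|\cdot\|_2)$ and to its reverse yields that the identity is continuous in both directions. A standard homogeneity argument then upgrades continuity at the origin to a linear estimate: if $\|v\|_1<\delta$ implies $\|v\|_2<1$, then for any $v\ne 0$ one picks $\alpha\in k^\times$ with $|\alpha|$ roughly equal to $\delta/\|v\|_1$, applies the estimate to $\alpha v$, and uses homogeneity to conclude $\|v\|_2\le C_1\|v\|_1$ for some uniform $C_1\ge 1$. Exchanging the roles of the two norms furnishes $C_2\ge 1$ with $\|v\|_1\le C_2\|v\|_2$, and taking $C=\max(C_1,C_2)$ produces the required equivalence.

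The main substance of the proof -- the actual work -- is packaged inside the quoted Bourbaki statement, which is itself proved by induction on $\dim E$ and crucially exploits the completeness of $(k,|\cdot|)$ (so that Cauchy sequences of coefficients converge). Since that result is used here as a black box, the argument above is merely bookkeeping. The only mildly delicate point is that the rescaling step implicitly requires the image of $|\cdot|$ on $k^\times$ to be sufficiently dense in $\bbR_{>0}$, which is automatic whenever the absolute value is nontrivial; in the trivially valued case the statement is vacuous.
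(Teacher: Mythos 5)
Your proof is correct and follows essentially the same route as the paper's: invoke the quoted Bourbaki continuity result for the identity map in both directions, then upgrade continuity to a norm inequality by rescaling. One small inaccuracy in your closing remark: when $|\cdot|$ is trivial the statement is not vacuous (nontrivial norms on $E$ still exist, e.g.\ the norm equal to $1$ on all nonzero vectors), and the paper explicitly notes that this case requires a separate, easy argument rather than being empty.
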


\begin{proof}
It suffices to use that the identity map $(E,||\cdot||)\to(E,||\cdot||')$ is continuous and observe that every continuous linear map is bounded.
The latter is an easy exercise in case $|\cdot|$ is trivial, and standard if it is not.
\end{proof}

Recall that, if $(e_1,\dots,e_n)$ is a basis, then the norm $||\cdot||_\infty$ (relative to this basis) is defined as $||\sum x_i e_i||_\infty = \max\{ |x_i| \}$.

\begin{cor} \label{bounded conditions}
For a subset $B \subset E=k^n$ the following properties are equivalent.
\begin{enumerate}
\item $B$ is a bounded set of $\mathbb {A}^n$.
\item All elements of $E^*$ are bounded on $B$.
\item All the coordinates of the elements of $B$ are uniformly bounded.
\item The norm $||\cdot||_\infty$ is bounded on $B$.
\item Every norm on $E$ is bounded on $B$.
\item Some norm on $E$ is bounded on $B$.
\end{enumerate}
\end{cor}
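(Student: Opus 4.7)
The plan is to prove the six equivalences by grouping them into three natural clusters and showing implications within and between the clusters, relying on Theorem~\ref{thm:equiv} as the only non-tautological input.

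First I would dispatch the equivalence of (3) and (4), which is essentially by definition: the norm $\|\cdot\|_\infty$ relative to the standard basis is precisely the supremum of the absolute values of the coordinates, so boundedness of $\|\cdot\|_\infty$ on $B$ is the same thing as uniform boundedness of all coordinates of elements of $B$.

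Next I would handle the algebraic cluster (1)$\Leftrightarrow$(2)$\Leftrightarrow$(3). For (1)$\Rightarrow$(2), note that any $\ell \in E^*$ is a linear (in particular regular) function on $\mathbb{A}^n$, hence bounded on $B$ by hypothesis. For (2)$\Rightarrow$(3), apply (2) to the coordinate functionals $x_i^*$ which lie in $E^*$. For (3)$\Rightarrow$(1), recall that every regular function on $\mathbb{A}^n$ is a polynomial in $x_1,\dots,x_n$; since $|\cdot|$ satisfies the triangle inequality and is multiplicative, uniform boundedness of each $|x_i(v)|$ on $B$ forces uniform boundedness of any polynomial expression $|P(x_1(v),\dots,x_n(v))|$, which is precisely condition (1).

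Finally I would treat the normed cluster (4)$\Leftrightarrow$(5)$\Leftrightarrow$(6). The implications (5)$\Rightarrow$(6) and (5)$\Rightarrow$(4) are trivial, and (4)$\Rightarrow$(6) follows because $\|\cdot\|_\infty$ is itself a norm on $E$. The only substantive implication is (6)$\Rightarrow$(5), and this is immediate from Theorem~\ref{thm:equiv}: if $\|\cdot\|$ and $\|\cdot\|'$ are two norms on the finite dimensional $k$-vector space $E$, there is $C\geq 1$ with $C^{-1}\|\cdot\| \leq \|\cdot\|' \leq C\|\cdot\|$, so boundedness of one on $B$ gives boundedness of the other. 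Chaining the three clusters via the shared condition (4)=(3) and the fact that $\|\cdot\|_\infty$ appears in both (4) and the normed cluster completes the circle.

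There is no real obstacle here; the whole argument is a bookkeeping exercise once Theorem~\ref{thm:equiv} is in hand. The one point worth verifying carefully is the trivial-absolute-value case, where every norm on a finite dimensional space is automatically bounded (since $\|\sum x_i e_i\| \leq \max_i \|e_i\|$ when $|\cdot|$ is trivial), so all six conditions hold for every subset $B$ — consistent with the claimed equivalence.
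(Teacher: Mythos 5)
Your proof is correct and follows exactly the route the paper intends: the paper states this as an immediate corollary of Theorem~\ref{thm:equiv} and the definitions, giving no written proof, and your cluster-by-cluster bookkeeping (definitional identification of (3) and (4), regular functions are polynomials for (1)$\Leftrightarrow$(2)$\Leftrightarrow$(3), and equivalence of norms for (4)$\Leftrightarrow$(5)$\Leftrightarrow$(6)) is precisely the intended argument. The remark on the trivial absolute value is a sensible sanity check and consistent with the paper's treatment of that case in the proof of Theorem~\ref{thm:equiv}.
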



%
%
%
%


\begin{theorem} \label{orthogonal}
For a subgroup $L$ of $\GL_n(k)$ the following are equivalent:
\begin{enumerate}
\item $L$ is bounded in $\GL_n(k)$.
\item $L$ is bounded as a subset of $\M_n(k)$.
\item $L$ preserves a norm on $k^n$.
\item $L$ preserves a spanning bounded set in $k^n$.
\end{enumerate}
For a subgroup $L$ of $G={\bf G}(k)$ the following are equivalent:
\begin{enumerate}
\item $L$ is bounded.
\item $L$ preserves a norm in all $k$-linear representations of ${\bf G}$.
\item $L$ preserves a norm in some
injective $k$-linear representation of ${\bf G}$.
\item $L$ preserves a spanning bounded set in some
injective $k$-linear representation of ${\bf G}$.
\end{enumerate}
\end{theorem}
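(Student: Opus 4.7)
The plan is to establish the two chains of equivalences separately, with the second list reducing to the first via linear representations.

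For the first list (subgroups of $\GL_n(k)$), I would first handle (1)$\Leftrightarrow$(2) by observing that $\GL_n$ embeds as the closed $k$-subvariety $\{(g,t)\mid t\det(g)=1\}$ of $\M_n \times \bbA^1$, so that the only regular functions on $\GL_n$ beyond those pulled back from $\M_n$ are generated by $\det^{-1}$. For a subgroup $L$, boundedness of matrix entries forces $\det$ to be bounded as a polynomial in entries, and the group property $L=L^{-1}$ yields boundedness of $\det^{-1}$, since it takes the same values as $\det$ on the group $L$. For (2)$\Rightarrow$(3), I would use the standard averaging trick: starting from the sup norm $\|\cdot\|_0$ on $k^n$, define $\|v\|:=\sup_{l\in L}\|lv\|_0$. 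This supremum is finite because $L\subseteq \M_n(k)$ bounded yields a uniform bound on operator norms (using Corollary~\ref{bounded conditions}), and the resulting function is visibly an $L$-invariant norm equivalent to $\|\cdot\|_0$. The step (3)$\Rightarrow$(4) is immediate: any closed ball of positive radius about the origin is bounded by Corollary~\ref{bounded conditions}, spans $k^n$, and is preserved by an isometric action. Finally, for (4)$\Rightarrow$(2), I would extract a basis $(e_1,\ldots,e_n)$ from a spanning bounded set $B$; then $\{le_i\mid l\in L\}\subseteq B$ is bounded, so by Theorem~\ref{thm:equiv} the coordinates of $le_i$ in the basis $(e_j)$ are uniformly bounded. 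Hence the matrices of the elements of $L$ in the $(e_j)$-basis have bounded entries, and another application of Theorem~\ref{thm:equiv} (the change-of-basis map is a linear homeomorphism) yields boundedness with respect to the standard basis.

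For the second list I would reduce to the first through representations. The implication (1)$\Rightarrow$(2) follows because any $k$-linear representation $\rho\colon \bfG \to \GL_n$ is in particular a $k$-morphism of varieties, so sends the bounded set $L$ to a bounded subset $\rho(L)\subseteq \GL_n(k)$, which then preserves a norm by part~1, and a norm preserved by $\rho(L)$ is automatically preserved by $L$. The step (2)$\Rightarrow$(3) uses the classical existence of an injective $k$-linear representation for any affine $k$-algebraic group (realize $\bfG$ as a closed subgroup of some $\GL_n$), and (3)$\Rightarrow$(4) is identical to its analogue in part~1.

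The main subtlety is the implication (4)$\Rightarrow$(1): from boundedness of $\rho(L)\subseteq \GL_n(k)$ (which part~1 supplies, once $\rho(L)$ is seen to preserve a spanning bounded set) we must extract boundedness of $L$ itself in $G$. I would factor $\rho$ as the composition $\bfG \twoheadrightarrow \rho(\bfG) \hookrightarrow \GL_n$, where $\rho(\bfG)$ is a $k$-closed algebraic subgroup of $\GL_n$. The second map is a closed immersion, so reflects boundedness trivially (pullback of regular functions is surjective). The first map is a bijective $k$-morphism of algebraic groups, and, being at worst a purely inseparable isogeny, is a finite $k$-morphism; Lemma~\ref{finite morphisms} then guarantees that the preimage in $G$ of a bounded subset of $\rho(\bfG)(k)$ is bounded, so $L$ is bounded in $G$ as desired. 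This reliance on finiteness (rather than mere bijectivity) is the only place where positive characteristic forces any real care, and is precisely where Lemma~\ref{finite morphisms} earns its keep.
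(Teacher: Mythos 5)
Your proof is correct. For the first list of equivalences your argument is essentially the paper's: the same treatment of $(1)\Leftrightarrow(2)$ via boundedness of $\det^{-1}$ on the group $L=L^{-1}$, and the same supremum norm $\sup_{l\in L}\|lv\|_\infty$ for $(2)\Rightarrow(3)$. The one structural difference is how you close the cycle: the paper proves $(4)\Rightarrow(3)$ directly by taking the symmetric convex hull $\left\{\sum\alpha_i v_i\mid v_i\in B,\ \sum|\alpha_i|\le 1\right\}$ of the spanning bounded set and observing it is the unit ball of an invariant norm, whereas you go $(4)\Rightarrow(2)$ by extracting a basis from $B$ and bounding matrix entries in that basis; both are fine, the paper's version having the small aesthetic advantage of producing an invariant norm explicitly. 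For the second list the routes genuinely diverge at the reduction step: the paper disposes of it in one line by asserting that any injective homomorphism of algebraic groups is a closed immersion, so that boundedness is reflected because the pullback on coordinate rings is surjective; you instead factor $\rho$ as $\bfG\twoheadrightarrow\rho(\bfG)\hookrightarrow\GL_n$ and invoke Lemma~\ref{finite morphisms} for the first factor, which is a bijective (hence finite, at worst purely inseparable) homomorphism onto its image. Your version costs an extra factorization but is the more robust one: the closed-immersion claim is only unconditionally true in characteristic zero (Frobenius-type maps are injective on points without being immersions), while finiteness of a bijective homomorphism of algebraic groups holds in all characteristics, and this is exactly the situation Lemma~\ref{finite morphisms} was designed for. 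Since the paper explicitly works over fields such as $\bar{\bbF}_p\lppar t\rppar$, your variant is, if anything, the safer reading of the intended argument.
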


\begin{proof}
Note that the second part of the theorem follows from the first once we recall that any injective homomorphism of algebraic groups is a closed immersion.
We prove the equivalence of the first four conditions.

$(1) \Leftrightarrow (2)\colon$
Clearly, if $L$ is bounded in $\GL_n(k)$ then it is bounded in $\M_n(k)$.
Assume $L$ is bounded in $\M_n(k)$. Then it has a bounded image under both morphisms
\[
\GL_n \xrightarrow{\iota} \GL_n
\hookrightarrow \M_n,
\quad
\GL_n \hookrightarrow \M_n \xrightarrow{\det} \mathbb{A}^1, \]
where
 \( \GL_n \xrightarrow{\iota} \GL_n \)
is the group inversion.
We conclude that $L$ has a bounded image under the product morphism
\( \GL_n \to \M_n\times \mathbb{A}^1\).
But the latter morphism is the composition of the isomorphism $\iota$ and the closed immersion
\( \GL_n \xrightarrow{\id\oplus\det^{-1}}
\M_n\oplus \mathbb{A}^1 \).
Thus $L$ is bounded in $\GL_n(k)$.

$(2) \Leftrightarrow (3)\colon$
If $L$ is bounded in $\M_n(k)$ then,
by
Corollary~\ref{bounded conditions}(3)
all its matrix elements are uniformly bounded,
hence for all $v\in k^n$, $\sup_{g\in L} \|gv\|_\infty$ is finite.
This expression forms an $L$-invariant norm on $k^n$.
On the other hand, if $L$ preserves a norm on $k^n$,
by the equivalence of this norm with $\|\cdot\|_\infty$, all matrix elements of $L$ are uniformly bounded, thus it is bounded in $\M_n(k)$.

$(3) \Leftrightarrow (4)\colon$
If $L$ preserves a norm then it preserves its unit ball which is a bounded spanning set.
If $L$ preserves a bounded spanning set $B$ than it also preserves its symmetric convex hull:
\[ \left\{\left.\sum_{i=1}^n \alpha_iv_i\right| v_i\in B,~\alpha_i\in k,~\sum_{i=1}^n |\alpha_i|\leq 1\right\}. \]
The latter is easily seen to be the unit ball of an $L$-invariant norm.
\end{proof}

Note that if $L$ is a compact subgroup of $G$ then $L$ is bounded, as the $k$-regular functions of ${\bf G}$ are continuous on $G$.

\begin{cor} \label{bounded-bi-inv}
Every bounded subgroup of $G$ admits a bi-invariant metric.
\end{cor}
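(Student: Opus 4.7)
The plan is to extract the metric directly from the norm provided by Theorem~\ref{orthogonal}. Given a bounded subgroup $L<G$, condition (3) of the second half of Theorem~\ref{orthogonal} supplies an injective $k$-linear representation $\rho\colon\bfG\hookrightarrow\GL(V)$ and a norm $\|\cdot\|$ on $V\cong k^n$ such that every element of $L$ acts as an isometry, i.e.\ $\|\rho(g)v\|=\|v\|$ for all $g\in L$ and $v\in V$.

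On $L$, I would then define
\[ d(g,h)\;=\;\sup_{v\in V\setminus\{0\}}\frac{\|\rho(g)v-\rho(h)v\|}{\|v\|}. \]
The supremum is at most $2$ since each element of $L$ has operator norm equal to $1$. Symmetry and the triangle inequality for $d$ are immediate consequences of the corresponding properties of $\|\cdot\|$. For non-degeneracy, $d(g,h)=0$ forces $\rho(g)v=\rho(h)v$ for every $v$, whence $\rho(g)=\rho(h)$, and since $\rho$ is injective this gives $g=h$.

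It remains to verify bi-invariance. For the left side, fix $a\in L$ and use that $\rho(a)$ is an isometry of $(V,\|\cdot\|)$ to obtain
\[ \|\rho(ag)v-\rho(ah)v\|=\|\rho(a)\bigl(\rho(g)v-\rho(h)v\bigr)\|=\|\rho(g)v-\rho(h)v\|, \]
whence $d(ag,ah)=d(g,h)$. For the right side, substitute $w=\rho(a)v$ in the supremum; the change of variables is a bijection on $V\setminus\{0\}$ and preserves $\|v\|=\|w\|$ because $\rho(a^{-1})\in\rho(L)$ is also an isometry, yielding $d(ga,ha)=d(g,h)$.

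There is no serious obstacle here: all the content has already been concentrated into Theorem~\ref{orthogonal}, and the construction above is essentially the standard operator-norm metric restricted to a group of isometries. The only point requiring care is non-degeneracy, where the injectivity of $\rho$ (rather than just of $\rho|_L$) is used, but this is guaranteed by Theorem~\ref{orthogonal}(3).
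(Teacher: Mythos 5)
Your proof is correct. It rests on the same pillar as the paper's --- Theorem~\ref{orthogonal} producing an invariant norm --- but implements the bi-invariance differently. The paper observes that $\End_k(V)$ is a representation of $G\times G$ (by left and right multiplication), notes that $L\times L$ is bounded there, and invokes Theorem~\ref{orthogonal} a second time to obtain an $L\times L$-invariant norm on $\End_k(V)$, whose restriction to $L$ gives the bi-invariant metric. You instead invoke the theorem only once, for $L$ acting on $V$, and then build the required $L\times L$-invariant norm on $\End_k(V)$ by hand: your $d(g,h)$ is exactly the operator-norm distance between $\rho(g)$ and $\rho(h)$ computed with respect to the $L$-invariant norm on $V$, and bi-invariance is the elementary fact that the operator norm is unchanged under pre- and post-composition with isometries. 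This is a more explicit and slightly more elementary route (it avoids having to note that $L\times L$ is bounded in $G\times G$ and that $\End_k(V)$ is a $G\times G$-module), at the cost of a few lines of verification that the paper's second appeal to the theorem makes unnecessary. All your checks --- finiteness of the supremum, non-degeneracy via injectivity of $\rho$, and the two invariances --- are sound.
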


\begin{proof}
Let $L$ be a bounded subgroup of $G$.
Fix an injective $k$-linear representation ${\bf G}\to \GL(V)$ and consider $L$ as a subset of $\End_k(V)$.
$\End_k(V)$ is a representation of $G\times G$, hence admits a norm which is invariant under the bounded group $L\times L$ by Theorem~\ref{orthogonal}.
This norm gives an $L\times L$-invariant metric on $\End_k(V)$ and on its subset $L$.
\end{proof}

\begin{prop}
Assume $\bf{V}$ is an affine $k$-variety with a $k$-affine action of $G$.
Let $B\subset {\bf V}(k)$ be a bounded set and denote by $\overline{B}^Z$ its Zariski closure.
Then the image of $\Stab_G(B)$ is bounded in the $k$-algebraic group  $\Stab_{\bf{G}}(\overline{B}^Z)/\Fix_{\bf{G}}(\overline{B}^Z)$.
\end{prop}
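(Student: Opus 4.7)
The plan is to realize $\mathbf{H}/\mathbf{F}$, where $\mathbf{H} := \Stab_{\mathbf{G}}(\overline{B}^Z)$ and $\mathbf{F} := \Fix_{\mathbf{G}}(\overline{B}^Z)$, as a closed $k$-subgroup of some $\GL(E)$ in such a way that the image of $\Stab_G(B)$ preserves a bounded spanning subset of $E^*$. Boundedness in $(\mathbf{H}/\mathbf{F})(k)$ will then follow from Theorem~\ref{orthogonal}, combined with the fact that a closed immersion of $k$-varieties reflects boundedness.

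First I note that $\Stab_G(B) \subseteq \mathbf{H}(k)$, since for every $g \in \Stab_G(B)$ one has $g \cdot \overline{B}^Z = \overline{gB}^Z = \overline{B}^Z$. By the standard fact that a rational action of an affine algebraic group on an affine variety realizes the coordinate ring as a directed union of finite-dimensional invariant $k$-subspaces, I choose a finite-dimensional $\mathbf{H}$-invariant $k$-subspace $E \subseteq k[\overline{B}^Z]$ that generates $k[\overline{B}^Z]$ as a $k$-algebra. The kernel of the associated representation $\rho\colon \mathbf{H} \to \GL(E)$ consists of elements acting trivially on a set of algebra generators, hence trivially on $k[\overline{B}^Z]$, hence fixing $\overline{B}^Z$ pointwise; thus the kernel is exactly $\mathbf{F}$. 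Consequently $\rho$ factors through an injective $k$-morphism, which is a closed immersion $\mathbf{H}/\mathbf{F} \hookrightarrow \GL(E)$.

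Now I pass to the contragredient action on $E^*$ and consider the evaluation functionals $\operatorname{ev}_b \in E^*$, $\operatorname{ev}_b(f) := f(b)$, for $b \in B$. The identity $g \cdot \operatorname{ev}_b = \operatorname{ev}_{gb}$ shows that the set $S := \{\operatorname{ev}_b \mid b \in B\}$ is preserved by $\Stab_G(B)$. Fixing a basis $f_1, \dots, f_n$ of $E$, the functional $\operatorname{ev}_b$ has coordinates $(f_1(b), \dots, f_n(b))$, and since each $f_i$ is a $k$-regular function on $\overline{B}^Z$ and $B$ is bounded, the set $S$ is bounded in $E^*$ by Corollary~\ref{bounded conditions}. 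Moreover $S$ spans $E^*$: any $f \in E$ vanishing at every point of $B$ vanishes on the Zariski-dense subset $B$ of $\overline{B}^Z$, hence on all of $\overline{B}^Z$, so the $\operatorname{ev}_b$ separate points of $E$, which in finite dimension is equivalent to spanning the dual.

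By Theorem~\ref{orthogonal}, the image of $\Stab_G(B)$ in $\GL(E^*)(k)$, preserving the bounded spanning set $S$, is bounded. Since $\mathbf{H}/\mathbf{F} \hookrightarrow \GL(E^*)$ is a closed immersion and closed immersions reflect boundedness (as $f^*$ is surjective on coordinate rings, cf.\ the discussion preceding Lemma~\ref{finite morphisms}), the image of $\Stab_G(B)$ in $(\mathbf{H}/\mathbf{F})(k)$ is also bounded. The only non-bookkeeping input in this argument is the existence of a finite-dimensional generating $\mathbf{H}$-invariant subspace of the coordinate ring; given it, the rest is a direct translation of the boundedness of $B$ into boundedness of the orbit of evaluation functionals, together with the faithfulness coming from Zariski-density of $B$ in $\overline{B}^Z$.
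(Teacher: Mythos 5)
Your proof is correct and follows essentially the same route as the paper: the paper invokes \cite[Proposition~1.12]{borel} to get an equivariant $k$-embedding of $\overline{B}^Z$ into a representation with spanning image and then applies the implication $(4)\Rightarrow(1)$ of Theorem~\ref{orthogonal}, whereas you simply unpack that citation by building the embedding explicitly via evaluation functionals on a finite-dimensional generating $\mathbf{H}$-invariant subspace of $k[\overline{B}^Z]$. The verifications you supply (boundedness of the $\operatorname{ev}_b$ from boundedness of $B$, spanning from Zariski-density, kernel of $\rho$ equal to $\Fix_{\mathbf{G}}(\overline{B}^Z)$, and reflection of boundedness along the closed immersion) are all sound and match the intended argument.
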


\begin{proof}
Without loss of generality we may replace $\bf{G}$ by $\Stab_{\bf{G}}(\overline{B}^Z)$ and then assume ${\bf{V}}=\overline{B}^Z$.
We then may further assume ${\bf{G}}=\Stab_{\bf{G}}(\overline{B}^Z)/\Fix_{\bf{G}}(\overline{B}^Z)$. We do so.
By \cite[Proposition~1.12]{borel} there exists a $k$-embedding of ${\bf V}$ into some vector space, which we may assume having a spanning image,
equivariant with respect to some $k$-representation ${\bf G}\to\GL_n$, which we thus may assume injective.
The proof then follows from the implication $(4)\implies(1)$ in the second of equivalence of Theorem~\ref{orthogonal}.
\end{proof}

\begin{cor} \label{NL}
Let $L<G$ be a bounded subgroup.
Then $N_{\bf G}(L)/Z_{\bf G}(L)$ is bounded.
\end{cor}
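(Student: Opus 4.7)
The plan is to recognize Corollary~\ref{NL} as a direct specialization of the preceding proposition, applied to the $k$-affine conjugation action of $G$ on the affine variety $\bfG$ with bounded set $B = L$.

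First I would verify the hypotheses. Since $\bfG$ is affine and the conjugation map $g\cdot x = gxg^{-1}$ is a $k$-morphism $\bfG\times\bfG\to\bfG$, the pair $(\bfG, \text{conjugation})$ fits the setting of the previous proposition, and $L\subset\bfG(k) = G$ is bounded by hypothesis. Because $L$ normalizes itself, we have $L\subseteq\Stab_G(L) = N_G(L)$.

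Next I would match up the algebraic groups appearing in the conclusion. The Zariski closure $\overline{L}^Z$ is itself a $k$-algebraic subgroup of $\bfG$, as the Zariski closure of a subgroup is a subgroup. With the standing convention $N_\bfG(L) := N_\bfG(\overline{L}^Z)$, the algebraic set-stabilizer of $\overline{L}^Z$ under conjugation is exactly $N_\bfG(L)$. The pointwise fixator $\Fix_\bfG(\overline{L}^Z)$ under the conjugation action equals the algebraic centralizer $Z_\bfG(\overline{L}^Z)$; since for each fixed $x$ the set $\{g\in\bfG : gxg^{-1} = x\}$ is Zariski-closed in $g$, centralizing $L$ is equivalent to centralizing $\overline{L}^Z$, so this fixator is nothing but $Z_\bfG(L)$.

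Invoking the preceding proposition with these identifications then yields the boundedness of the image of $\Stab_G(L) = N_G(L)$ — and a fortiori of the subgroup $L\subset N_G(L)$ — in the $k$-algebraic group $\Stab_\bfG(\overline{L}^Z)/\Fix_\bfG(\overline{L}^Z) = N_\bfG(L)/Z_\bfG(L)$, which is precisely the assertion of the corollary. No real obstacle appears: the corollary is essentially a translation of the proposition into group-theoretic language, and the only step requiring any care is the bookkeeping that identifies the algebraic stabilizer and fixator of $\overline{L}^Z$ under conjugation with the normalizer $N_\bfG(L)$ and centralizer $Z_\bfG(L)$.
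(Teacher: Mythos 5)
Your approach is exactly the intended one: the paper offers no separate proof of Corollary~\ref{NL}, and it is meant to follow by specializing the preceding proposition to the conjugation action of $\bfG$ on itself with $B=L$, with the identifications $\Stab_{\bf{G}}(\overline{L}^Z)=N_{\bf G}(\overline{L}^Z)$ and $\Fix_{\bf{G}}(\overline{L}^Z)=Z_{\bf G}(\overline{L}^Z)=Z_{\bf G}(L)$ that you carry out correctly.

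One point deserves more care. The proposition, applied over $k$ as you apply it, only controls the image of $\Stab_G(B)=N_G(L)$, i.e.\ the normalizer taken inside the $k$-points $G$. The corollary, with its boldface notation and the paper's convention of identifying $\bfG$ with its $\widehat{k}$-points, asserts boundedness of $N_{\bf G}(L)/Z_{\bf G}(L)$ where the normalizer is taken in all of $\bfG$; this is a priori larger than the image of $N_G(L)$. Your ``standing convention $N_\bfG(L):=N_\bfG(\overline{L}^Z)$'' is not one the paper makes, and it does not resolve this: what you actually prove is boundedness of the image of $N_G(L)$ in $N_\bfG(\overline{L}^Z)/Z_\bfG(L)$. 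The gap is easily closed using the remark earlier in that section that a subset of ${\bf V}(k)$ is $k$-bounded if and only if it is $k'$-bounded for any valued extension $k'$: one may therefore run the same argument over $\widehat{k}$, where $G=\bfG$ and $\Stab_{\bfG}(L)=N_{\bf G}(L)$ exactly, yielding the stated conclusion. With that one-line addition your argument is complete.
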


\section{The space of norms and seminorms}\label{norms}

In this section we study a compact space on which an algebraic group over a complete valued field acts by homeomorphisms,
the space of seminorms. This space was already considered in the case when $k$ is local, in \cite{werner}.


%
%
%
We fix a finite dimensional vector space $E$ over $k$. 
Given two norms $n,n'$ on $E$ we denote 
\[ d(n,n')=\log\sup\left\{\left.\frac{n(y)n'(x)}{n'(y)n(x)}~\right|~x,y\in E\setminus\{0\}\right\}. \]
This number is finite by the fact that $n$ and $n'$ are equivalent norms.
Recall that two seminorms on $E$ are called homothetic if they differ by a multiplicative positive constant.
The relation of being homothetic is an equivalence relation.
We denote the set consisting of all homothety classes of norms on $E$ by $I(E)$.
Observe that $d(n,n')$ only depends on the homothety classes of $n$ and $n'$ and thus define a function on $I(E)$.


\begin{lemma}\label{boundedaction}
The function $d:I(E)\times I(E)\to [0,\infty)$ defines  a metric on $I(E)$. The group $\PGL(E)$ acts continuously and isometrically on $I(E)$ and the stabilizers in $\PGL(E)$ of bounded subsets in $I(E)$ are bounded as well.
\end{lemma}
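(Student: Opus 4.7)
The plan is to handle the four assertions in order: well-definedness of $d$ on $I(E)$, the metric axioms, continuous isometric action of $\PGL(E)$, and boundedness of stabilizers of bounded subsets.

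\emph{Well-definedness and metric axioms.} The ratio $n(y)n'(x)/(n'(y)n(x))$ is manifestly invariant under rescaling either $n$ or $n'$ by a positive constant, so $d$ descends to $I(E)\times I(E)$. Setting $x=y$ shows the sup is $\geq 1$, so $d\geq 0$; symmetry follows from the $x\leftrightarrow y$ swap inside the supremum; the triangle inequality follows by factoring the ratio for $(n,n'')$ through $n'$ and taking suprema before logarithms; and definiteness comes from the observation that $d(n,n')=0$, combined with the built-in $(x,y)\leftrightarrow (y,x)$ symmetry of the supremum, forces the ratio to equal $1$ identically, whence $n\in\bbR_{>0}\cdot n'$.

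\emph{Continuous isometric action.} The $\GL(E)$-action $(g\cdot n)(x)=n(g^{-1}x)$ clearly descends to $\PGL(E)$, and the change of variables $u=g^{-1}x$, $v=g^{-1}y$ in the defining supremum yields isometry. For continuity, the triangle inequality and isometry reduce the problem to showing $d(h_k\cdot n,n)\to 0$ for $h_k\to e$ in $\PGL(E)$ and a fixed $n\in I(E)$; picking a local algebraic section of $\GL(E)\to\PGL(E)$ near $e$ (the value $d(\tilde h\cdot n,n)$ depends only on the image of $\tilde h$ in $\PGL(E)$), we lift to $\tilde h_k\to I$ in $\GL(E)$. Fix a basis and the associated $\norm{\cdot}_\infty$; by Theorem~\ref{thm:equiv} there is $C\geq 1$ with $C^{-1}\norm{\cdot}_\infty\leq n\leq C\norm{\cdot}_\infty$, so $n(\tilde h_k^{-1}x-x)\leq C^2\epsilon_k\,n(x)$ with $\epsilon_k=\norm{\tilde h_k^{-1}-I}_{\infty,\text{op}}\to 0$. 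Consequently $n(\tilde h_k^{-1}x)/n(x)\to 1$ uniformly in $x\neq 0$, which forces $d(\tilde h_k\cdot n,n)\to 0$.

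\emph{Bounded stabilizers.} Let $B\subset I(E)$ have diameter $R$ and let $L=\Stab_{\PGL(E)}(B)$; fix $n_0\in B$. Every $g\in L$ satisfies $d(n_0,g\cdot n_0)\leq R$, which unpacks to $\norm{g}_{n_0}\cdot\norm{g^{-1}}_{n_0}\leq e^R$, where $\norm{\cdot}_{n_0}$ is the operator norm on $\End(E)$ induced by $n_0$. Consider the adjoint representation $\Ad\colon\PGL(E)\hookrightarrow\GL(\End(E))$, a closed immersion since it is injective with trivial kernel. With respect to the operator norm on $\End(\End(E))$ induced by $\norm{\cdot}_{n_0}$, the endomorphism $\Ad(g)\colon h\mapsto ghg^{-1}$ has norm at most $\norm{g}_{n_0}\cdot\norm{g^{-1}}_{n_0}\leq e^R$. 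Hence $\Ad(L)$ is norm-bounded in $\End(\End(E))$, so bounded there by Corollary~\ref{bounded conditions} (and Theorem~\ref{orthogonal}), and therefore $L$ itself is bounded in $\PGL(E)$ because closed immersions of $k$-varieties reflect boundedness.

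I expect the last step to be the main obstacle: one must translate the purely metric bound $d(n_0,g\cdot n_0)\leq R$ into a boundedness statement in the algebro-geometric sense, and the natural bridge is the adjoint representation together with Theorem~\ref{orthogonal}/Corollary~\ref{bounded conditions}. The continuity argument is also slightly delicate because it must pass through the quotient map $\GL(E)\to\PGL(E)$, but this is routine once one invokes Theorem~\ref{thm:equiv}.
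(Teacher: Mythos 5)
Your proof is correct. The metric axioms, the isometry, and the continuity argument (lift a sequence $h_k\to e$ from $\PGL(E)$ to $\GL(E)$, then use equivalence of norms to get $n(\tilde h_k^{-1}x)/n(x)\to 1$ uniformly) coincide with the paper's treatment, which handles the first two points as a ``straightforward verification'' and proves continuity of orbit maps by exactly this uniform-convergence-on-the-unit-ball argument. Where you genuinely diverge is the bounded-stabilizer claim. The paper chooses normalized representatives $N'=\{n : n(v)=1\}$ of the bounded set, forms the bounded spanning set $B=\{x \mid \forall n\in N',\ n(x)\leq 1\}$ in $E$ itself, and applies the implication $(4)\Rightarrow(1)$ of Theorem~\ref{orthogonal} to its stabilizer in $\GL(E)$, then passes to the image in $\PGL(E)$; the delicate point there is that an element of the $\PGL(E)$-stabilizer must be lifted to something actually preserving $B$, which hinges on the chosen normalization. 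You instead exploit that $d(n_0,g\cdot n_0)=\log\bigl(\norm{g}_{n_0}\norm{g^{-1}}_{n_0}\bigr)$ is scaling-invariant, hence intrinsic to $\PGL(E)$, and transport the resulting bound through the adjoint representation into $\End(\End(E))$, where Corollary~\ref{bounded conditions} and Theorem~\ref{orthogonal} (boundedness in $\M_n$ versus $\GL_n$) finish the job, with boundedness pulled back along the closed immersion $\Ad$. Your route costs an extra representation but cleanly avoids the normalization/lifting issue; the paper's is shorter and stays inside $E$. Both rest on the same two pillars, Theorem~\ref{thm:equiv} and Theorem~\ref{orthogonal}, so either argument is acceptable.
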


\begin{proof}
The fact that $d$ is a metric and $\PGL(E)$ acts by isometries on $I(E)$ is a straightforward verification. 
To prove the continuity part, it suffices to show that the orbit map $g\mapsto gn$ is continuous for all $n\in I(E)$. 
Fix a norm $n$ on $E$. 
Let $(g_i)$ be a sequence converging to $e$ in $\PGL(E)$. 
By an abuse of notation we identify $g_i$ with an element of $\GL(E)$ such that $g_i\to e\in\GL(E)$, and also still denote $n$ a norm whose homothety class is $n$.
Using that $d(g_in,n)=\log\sup\left\{\frac{n(g_i^{-1}y)n(x)}{n(g_i^{-1}x)n(y)}~|~\ x,y\in E\setminus \{0\},\ n(x),n(y)<1\right\}$ and that $g_i^{-1}$ converges uniformly to $e$ on the unit ball of $E$ with respect to $n$, we see that indeed $d(g_in,n)\to 0$.

Let $L$ be the stabilizer of some bounded set $N\subseteq I(E)$. 
Fix $v\neq 0$ and identify $N$ with a set $N'$ of norms on $E$ satisfying $n(v)=1$ for every $n\in N'$. 
The set $B=\{x\in E~|~\forall n\in N',~n(x)\leq 1\}$ is clearly bounded in $E$.
By Theorem~\ref{orthogonal}, its stabilizer $L'\in \GL(E)$ is bounded, hence also its image in $\PGL(E)$, namely $L$.
\end{proof}

\begin{remark} The space $I(E)$  actually contains the affine Bruhat-Tits building $\mathcal{I}(E)$ associated to $\PGL(E)$ \cite{Parreau} and there is a metric $d_0$ on $\mathcal{I}(E)$ such that $(\mathcal{I}(E),d_0)$ is CAT(0) ---not necessarily complete. The metric $d$ is similar to the one considered by Goldman and Iwahori in \cite{MR0144889}. The two metrics $d$ and $d_0$ are Lipschitz-equivalent. 
This can be checked first on an apartment and extended to the whole building using that any two points actually lie in some apartment. Thus, Lemma \ref{boundedaction} and Theorem~\ref{orthogonal} are a reminiscence of the Bruhat-Tits fixed point theorem.
\end{remark}


Let  $S'(E)$ be the space of non-zero seminorms on $E$, and $S(E)$ be its quotient by homotheties.
 We endow $S'(E)$ with the topology of pointwise convergence and $S(E)$ with the quotient topology.

\begin{prop}
The space $S(E)$ is compact and metrizable. The action of $\PGL(E)$ on $S(E)$ is continuous.
\end{prop}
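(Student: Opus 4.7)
The plan is to realize $S(E)$ as a compact subspace of $S'(E)$ via a normalization using a basis, deduce metrizability from the separability of $k$, and then pass the continuity of the action from $S'(E)$ down to the quotient.

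First I would fix a basis $(e_1,\dots,e_n)$ of $E$ and set
\[ S_0 = \{s\in S'(E)\mid \max_i s(e_i) = 1\}. \]
Any nonzero seminorm satisfies $s(e_i)>0$ for at least one $i$ (otherwise $s(\sum x_ie_i)\leq \sum |x_i| s(e_i)=0$), so $S_0$ meets every homothety class exactly once. Hence the restriction $\pi\vert_{S_0}:S_0\to S(E)$ of the quotient map is a continuous bijection. Moreover, the normalization $r:S'(E)\to S_0$, $r(s)=s/\max_i s(e_i)$, is continuous and factors through $S(E)$ because it is invariant under positive rescaling; this factorization provides a continuous inverse to $\pi\vert_{S_0}$. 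Thus $S(E)\cong S_0$ as topological spaces, and it suffices to study $S_0$.

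Next I would verify that $S_0$ is compact. Writing $\|\sum x_i e_i\|_1=\sum |x_i|$, any $s\in S_0$ obeys $s(v)\leq \|v\|_1$, so $S_0$ embeds in $\prod_{v\in E}[0,\|v\|_1]$, which is compact by Tychonoff. Subadditivity, absolute homogeneity, and the condition $\max_i s(e_i)=1$ are all closed under pointwise limits, so $S_0$ is closed in this product and therefore compact.

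For metrizability I would use separability of $k$ (Setup~\ref{setup}), which makes $(E,\|\cdot\|_1)$ separable; pick a countable dense subset $D\subset E$. Every $s\in S_0$ is $1$-Lipschitz for $\|\cdot\|_1$ since $|s(u)-s(v)|\leq s(u-v)\leq \|u-v\|_1$, so $s$ is determined by its values on $D$. By this equicontinuity, if $s_n\to s$ pointwise on $D$ with $s_n,s\in S_0$, then the standard $\varepsilon/3$-argument shows $s_n\to s$ pointwise on all of $E$. Therefore the restriction map $S_0\hookrightarrow \prod_{v\in D}[0,\|v\|_1]$ is a homeomorphism onto its image in a metrizable countable product, giving metrizability of $S_0$.

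Finally, for continuity of the action, I would first treat $S'(E)$. The group $\GL(E)$ acts by $(g\cdot s)(v)=s(g^{-1}v)$; by Theorem~\ref{cont-action} I only need to check that $g_n\to e$ in $\GL(E)$ implies $g_n\cdot s\to s$ pointwise. This follows because $g_n^{-1}v\to v$ in $E$ and $s$ is $\|\cdot\|_1$-Lipschitz, hence continuous. The scalars $\lambda\in k^\ast\subset \GL(E)$ act on a seminorm by $v\mapsto |\lambda|^{-1}s(v)$, i.e.\ by a homothety, so the $\GL(E)$-action descends to a continuous action of $\PGL(E)$ on the quotient $S(E)$. The one spot requiring care is the metrizability step, where one must invoke the equicontinuity to turn pointwise convergence on $D$ into pointwise convergence on all of $E$; everything else is routine compactness and quotient topology bookkeeping.
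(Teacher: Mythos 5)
Your proof is correct and follows essentially the same strategy as the paper: normalize seminorms so as to realize $S(E)$ inside $S'(E)$, get compactness from Tychonoff via an embedding into a product of intervals $\prod_{v\in E}[0,\|v\|]$, get metrizability from the Lipschitz property together with separability of $E$, and get continuity of the action from a pointwise estimate. The one genuine difference is the choice of normalization: the paper works with the set $S^1(E)$ of seminorms of Lipschitz constant at most $1$, which is not a cross-section of the homothety classes, and proves joint continuity of $(g,s)\mapsto gs$ directly, whereas you use the genuine cross-section $S_0=\{s\mid \max_i s(e_i)=1\}$ and deduce joint continuity from separate continuity via Theorem~\ref{cont-action}. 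Your cross-section has the small advantage of being closed in the product (it excludes the zero seminorm, which lies in the pointwise closure of the nonzero elements of $S^1(E)$), so the identification $S(E)\cong S_0$ and the compactness step come out slightly cleaner; both routes are equally valid.
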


\begin{proof} 
 Let $m$ be the dimension of $E$. Fix a basis $(e_1,\dots, e_m)$ of $E$. Let $S_1(E)$ be the set of all $s\in S'(E)$ such that $s(e_i)\leq 1$ for every $1\leq i\leq d$, and $s(e_j)=1$ for some $j$.

We first claim that the quotient map $S'(E)\to S(E)$ restricts to a surjection $S_1(E)\to S(E)$. This follows from the fact that if a seminorm is zero on all the vectors $e_i$, then it is zero everywhere, by triangle inequality. Furthermore, the map $S_1(E)\to S(E)$ is actually an injection. Indeed if $s\in S_1(E)$ and $\lambda s\in S_1(E)$ it is easy to conclude that $\lambda=1$.

We now claim that the space $S_1(E)$ is compact. Indeed, let $\|\cdot\|_1$ be the norm defined as $\|\sum x_i e_i \|_1=\sum_i |x_i | $. Let $v=\sum x_ie_i$. Then  we see that $s(v)\leq  \|v\|_1$ for every $v\in E$.  So we get that $S_1(E)$ is homeomorphic to a closed subset of $\prod_{v\in E} \left[0,\|v\|_1\right]$. This proves the compactness of $S_1(E)$ and therefore of $S(E)$.  Note that it also proves that every element of $S_1(E)$ is $1$-Lipschitz with respect to $\|\cdot \|_1$.

It follows that $S_1(E)$ is homeomorphic to $S(E)$. The metrizability of $S_1(E)$  comes from the fact that $S_1(E)$ is a closed subset of the space of continuous functions on $E$, which is metrizable because $E$ is separable. 

Now,  let $(g_n,s_n)$ be a sequence converging to $(e,s)\in \GL(E)\times S_1(E)$ then $g_ns_n$ tends to $s$. Indeed, for every $v\in E$,
\[|s_n(g_nv)-s(v)|\leq|s_n(g_nv)-s_n(v)|+|s_n(v)-s(v)|\leq \|g_nv-v\|_1+| s_n(v)-s(v)|\to0.\] 
\end{proof}

Each non-zero seminorm $s$ has a kernel $\ker(s)=\{v\in E ~|~ s(v)=0\}$, 
which is a proper linear subspace of $E$ depending only of the homothety class of $s$. 
The map $S(E)\to \bbN$, $s\mapsto \dim(\ker(s))$ is obviously $\PGL(E)$-invariant.
Denote by $S_m(E)$ the space of homothety classes of seminorms $s$ such that $\dim(\ker(s))=m$. Note that $S_0(E)=I(E)$.
We denote by $\Gr_m(E)$ the Grassmannian of $m$-dimensional linear subspaces of $E$.
The map  $S_m(E)\to \Gr_m(E)$, $s\mapsto \ker(s)$ is clearly $\PGL(E)$-equivariant.
$\Gr_m(E)$ is the $k$-points of a $k$-algebraic variety, thus carries a Polish topology by Proposition~\ref{polishing}.


\begin{prop}\label{ker}
The maps $S(E)\to \bbN$, $s\mapsto \dim(\ker(s))$ and $S_m(E)\to \Gr_m(E)$, $s\mapsto \ker(s)$ are measurable.
\end{prop}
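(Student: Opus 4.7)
The plan is to exhibit the closed relation
\[ A_m = \{(s,W) \in S(E) \times \Gr_m(E) \mid W \subseteq \ker(s)\} \]
and to deduce both measurability statements from its closedness. To evaluate a class $s \in S(E)$ on a vector, I will fix a basis $e_1,\dots,e_d$ of $E$ and work with the continuous section $\sigma: S(E) \to S'(E)$ given by $\sigma(s) = s/\max_i s(e_i)$. On the image of $\sigma$ each representative is $1$-Lipschitz with respect to the $\ell^1$-norm associated to the basis, so the evaluation $(s,v) \mapsto \sigma(s)(v)$ is jointly continuous, by the same Lipschitz argument already used for $S^1(E)$. I will then cover $\Gr_m(E)$ by the standard affine charts $U_I$, indexed by $m$-element subsets $I \subset \{1,\dots,d\}$; over $U_I$ every $W$ has the canonical basis $\{e_i + \sum_{j \notin I} a_{ij}(W)\, e_j\}_{i \in I}$ with continuous coordinate functions $a_{ij}: U_I \to k$. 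In each chart, membership in $A_m$ is the simultaneous vanishing of the $m$ jointly continuous functions $\sigma(s)(e_i + \sum_j a_{ij}(W)\, e_j)$, hence a closed condition; since closedness is local, this shows $A_m$ is closed in $S(E) \times \Gr_m(E)$.

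From this, $T_m := \{s \in S(E) : \dim \ker(s) \geq m\}$ is the image of $A_m$ under the projection onto the first factor of a product of Polish spaces, hence analytic and in particular universally measurable. Since $S_m(E) = T_m \setminus T_{m+1}$, the first map $s \mapsto \dim \ker(s)$ is measurable. For the second map, its graph in $S_m(E) \times \Gr_m(E)$ equals $A_m \cap (S_m(E) \times \Gr_m(E))$: on this locus, the inclusion $W \subseteq \ker(s)$ combined with $\dim W = m = \dim \ker(s)$ forces $W = \ker(s)$. Thus the graph is measurable, and I will invoke the standard descriptive-set-theoretic fact that a function between standard Borel spaces with measurable graph is itself measurable to conclude that $\ker$ is measurable.

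The main obstacle I foresee is securing Borel (rather than merely analytic) measurability of $T_m$ when $k$ is not locally compact: in that case $\Gr_m(E)$ need not be compact, so the projection of the closed set $A_m$ onto $S(E)$ is only analytic a priori. When $k$ is locally compact, $\Gr_m(E)$ is compact and $T_m$ is closed, so $S_m$ is locally closed and hence Borel; in general one settles for universal measurability, which is what the downstream measure-theoretic uses of the proposition require.
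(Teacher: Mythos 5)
Your construction of the incidence relation $A_m=\{(s,W)\mid W\subseteq\ker(s)\}$ is sound: the normalized section $\sigma(s)=s/\max_i s(e_i)$ is well defined (a non-zero seminorm cannot vanish on a whole basis) and continuous, evaluation is jointly continuous by the uniform $1$-Lipschitz bound, and vanishing on the canonical chart basis of $W$ does characterize $W\subseteq\ker(s)$, so $A_m$ is closed. This is a genuinely different route from the paper, which never forms a relation on $S(E)\times\Gr_m(E)$: instead it fixes a countable dense subfield $k_0<k$ and writes $\{s\mid\dim\ker(s)\leq m\}$ explicitly as $\bigcup_{F_0}\bigcup_n\bigcap_{v\in F_0}\{s\mid s(v)\geq|v|/n\}$, a countable union of closed sets; it then handles $s\mapsto\ker(s)$ by observing that the pointwise and uniform topologies on $S'(E)$ have the same Borel sets and that $\ker$ is continuous from the uniform topology to the Hausdorff-metric topology on $\Gr_m(E)$. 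The payoff of the paper's route is that the level sets come out $F_\sigma$/$G_\delta$, hence genuinely Borel, with no descriptive-set-theoretic machinery.

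This difference is not cosmetic, and it is where your argument has a real weakness. Since $\Gr_m(E)$ is compact only when $k$ is locally compact (and indeed $T_m$ can fail to be closed over, say, $\bbC_p$), your projection argument only shows $T_m$ is analytic, so you obtain universal measurability where the paper obtains Borel measurability. Worse, the final step for $s\mapsto\ker(s)$ does not go through as stated: the theorem ``measurable graph implies measurable function'' is a statement about functions between \emph{standard Borel} spaces with \emph{Borel} (or analytic) graph, whereas your domain $S_m(E)$ is, under your analysis, only a difference of analytic sets and your graph only universally measurable; a function between standard Borel spaces with universally measurable graph need not even be universally measurable. The fix is to bypass the graph theorem: for Borel $B\subseteq\Gr_m(E)$ one has $\ker^{-1}(B)=S_m(E)\cap\pi_{S(E)}\bigl(A_m\cap(S(E)\times B)\bigr)$, an intersection of a difference of analytic sets with an analytic set, hence universally measurable. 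Even so, you should justify your closing claim that universal measurability suffices downstream: the proof of Theorem~\ref{BDL} needs $y\mapsto\phi(y)(S_d(E))$ to be measurable, which is immediate for Borel $S_d(E)$ but for merely analytic sets requires a capacitability-type argument for $\mu\mapsto\mu^*(S_d(E))$. Adapting the paper's $k_0$-rational subspace trick would give you the Borel statement directly and let you drop all of this.
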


\begin{proof}
We first note that the space $S(E)$ is covered by (countably many) open sets which are homeomorphic images of sets of the form 
$\{s\in S'(E)~|~s(v)=1\}$, for $v\in E$, under the quotient map $S'(E)\to S(E)$.
It is therefore enough to establish that the corresponding maps $S'(E)\to \bbN$, $S_m'(E)\to \Gr_m(E)$ are measurable (where $S'_m(E)$ denotes the preimage of $S_m(E)$).

Fix a basis for $E$ and a countable dense subfield $k_0<k$.
Let $E_0=E(k_0)$ be the $k_0$-span of the fixed basis of $E$.
A subspace of $E$ is said to be defined over $k_0$ if it has a basis in $E_0$.
$E_0$ is a $k_0$-vector space and it is a countable dense subset of $E$.
Note that for every $d$, $\Gr_m(E_0)$ is countable.
Observe that for $s\in S'(E)$, $\dim(\ker(s))\leq m$ if and only if we can find a codimension $m$ subspace $F<E$ which is defined over $k_0$, such that $s$ restricts to a norm on $F$.
The latter condition is equivalent by Theorem~\ref{thm:equiv} to the condition that there exists $n\in \bbN$ such that for every $v\in F$, $s(v)\geq |v|/n$ for some fixed norm $|\cdot|$.
Note that it is enough to check this for every $v\in F_0=F(k_0)$, thus we obtain
\[ \{s\in S'(E)~|~\dim(\ker(s))\leq m\} = \bigcup_{F_0\in \Gr_{\dim(E)-m}(E_0)} \bigcup_{n} \bigcap_{v\in F_0} \{s\in S'(E)~|~s(v)\geq |v|/n\}. \]
This shows that the map $s\mapsto \dim(\ker(s))$ is measurable.

In order to prove that the map $S'_m(E) \to \Gr_m(E)$ is measurable, we make two observations.
We first observe that the topologies of pointwise convergence and uniform convergence give the same Borel structure on $S'(E)$.
In fact, for every separable topological space $X$, the pointwise and uniform convergence topologies on $C_b(X)$ give the same Borel structure
(as uniform balls are easily seen to be Borel for the pointwise convergence topology), and $S'(E)$ could be identified with a closed (for both topologies) subspace of bounded continuous functions on the unit ball of $E$.
Our second observation is that we may identify $\Gr_m(E)$ with a subset of the space of closed subsets of the unit ball of $E$.
Endowing it with the Hausdorff metric topology, we get a $\PGL(E)$-invariant Polish topology on $\Gr_m(E)$.
Since the Polish group $\PGL(E)$ acts transitively on $\Gr_m(E)$, by 
Effros Lemma \cite[Lemma 2.5]{effros} the quotient topology is the unique $\PGL(E)$-invariant Polish topology on this space,
thus the topology on $\Gr_m(E)$ given by the Hausdorff metric coincides with the one discussed in Proposition~\ref{polishing}.

The proof is now complete, observing further that with respect to the uniform convergence topology on $S'_m(E)$ and the Hausdorff metric topology on $\Gr_m(E)$, the map $s\mapsto \ker(s)$ is in fact continuous
(moreover, it is $C$-Lipschitz on $\{s\in S'_m(E)~|~s~\textrm{ is }C\mbox{-Lipschitz}\}$).
\end{proof}

\section{Existence of algebraic representations}

This section is devoted to the proof of Theorem~\ref{BDL},
which we restate below. The reader who is unfamiliar with the notion of measurable cocycles and amenable actions might consult with profit Zimmer's book \cite[Chapter~4]{zimmer-book}. The following theorem provides a so-called \emph{algebraic representation} of the space $R$, thus allowing to start the machinery developed in \cite{BF} and prove cocycle super-rigidity for the group $G$.


\begin{theorem}
Let $R$ be a locally compact group and $Y$ an ergodic, amenable Lebesgue $R$-space.
Let $(k,|\cdot|)$ be a valued field.
Assume that as a metric space $k$ is complete and separable.
Let ${\bf G}$ be a simple $k$-algebraic group.
Let $f:R\times Y \to {\bf G}(k)$ be a measurable cocycle.

Then either
there exists
a $k$-algebraic subgroup ${\bf H}\lneq {\bf G}$
and an $f$-equivariant measurable map $\phi:Y\to {\bf G}/{\bf H}(k)$,
or there exists
a complete and separable metric space $V$ on which $G$ acts by isometries
with bounded stabilizers
and an $f$-equivariant measurable map $\phi':Y\to V$.

Furthermore,
in case $k$ is a local field the $G$-action on $V$ is proper
and in case $k=\bbR$ and $G$ is non-compact the first alternative always occurs.
\end{theorem}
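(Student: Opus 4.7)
The plan is to combine amenability of $Y$ with the compactification of the space of norms by seminorms developed in \S\ref{norms}, then decompose the resulting fibrewise measure by kernel dimension, and play Theorem~\ref{mainthm} against the simplicity of $\mathbf{G}$ via Proposition~\ref{prop:stab}. First, fix a faithful irreducible $k$-rational representation $\mathbf{G}\hookrightarrow \GL(E)$. This endows $G$ with a continuous action on the compact metrizable $G$-space $S(E)$ of homothety classes of nonzero seminorms on $E$. Using amenability of the $R$-action on $Y$ together with this action, extract an $f$-equivariant measurable map $\phi\colon Y\to\Prob(S(E))$.

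Next, invoke Proposition~\ref{ker}: the $G$-invariant kernel-dimension map $s\mapsto\dim\ker(s)$ is Borel, so $S(E)=\bigsqcup_{m=0}^{\dim E-1}S_m(E)$ is a $G$-invariant Borel stratification. The weights $a_m(y):=\phi(y)(S_m(E))$ are $R$-invariant and hence a.s.\ constant by ergodicity. This yields a dichotomy: either $a_m>0$ for some $m\ge 1$, in which case the normalized restriction of $\phi$ to $S_m(E)$ composed with the measurable $G$-equivariant map $\ker\colon S_m(E)\to\Gr_m(E)(k)$ of Proposition~\ref{ker} produces an $f$-equivariant measurable map $\psi\colon Y\to\Prob(\Gr_m(E)(k))$, or else $\phi(y)$ is supported on $S_0(E)=I(E)$ almost surely.

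In the algebraic branch, Theorem~\ref{mainthm} ensures that $G$ acts almost algebraically on $\Prob(\Gr_m(E)(k))$. The composition $Y\to G\backslash\Prob(\Gr_m(E)(k))$ is $R$-invariant by the cocycle identity, so by ergodicity together with Proposition~\ref{tame} the map $\psi$ is concentrated on a single orbit, equivariantly $G/L$ with $L=\Stab(\nu_0)$ almost algebraic (Theorem~\ref{almost-alg}). Applying Proposition~\ref{prop:stab} to $L$ produces a $k$-algebraic subgroup $\mathbf{H}_0$ normalized by $L$ with precompact image in $(N_\mathbf{G}(\mathbf{H}_0)/\mathbf{H}_0)(k)$. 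Simplicity of $\mathbf{G}$ together with irreducibility of $E$ force $\mathbf{H}_0$ to be either essentially trivial, in which case $L$ itself is bounded, or non-normal, in which case $\mathbf{H}:=N_\mathbf{G}(\mathbf{H}_0)\lneq\mathbf{G}$ is a strict $k$-algebraic subgroup containing $L$; the composition $Y\to G/L\hookrightarrow\mathbf{G}/\mathbf{H}(k)$ then realizes the first alternative.

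In the remaining bounded case, Theorem~\ref{orthogonal} yields an $L$-fixed point in $I(E)$: a bounded subgroup preserves some norm $n_0$ on $E$, hence fixes the class $[n_0]\in I(E)$. Picking a Borel section $s\colon G/L\to G$, the formula $\phi'(y):=s(\psi(y))\cdot[n_0]$ defines an $f$-equivariant measurable map $Y\to V$, where $V$ is the completion of $(I(E),d)$ --- a complete separable metric $G$-space on which $G$ acts by isometries with bounded point stabilizers by Lemma~\ref{boundedaction}. The main obstacle I anticipate is the subcase $\phi(y)\in\Prob(I(E))$ of the second branch: here one first has to extract an $f$-equivariant coset map with bounded stabilizer before invoking Theorem~\ref{orthogonal}, which requires either a tame-orbits style argument on $\Prob(I(E))$ as a Polish $G$-space or a barycentric/fixed-point argument exploiting the CAT$(0)$ building inside $I(E)$ mentioned in the remark after Lemma~\ref{boundedaction}. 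Finally, the ``furthermore'' claims follow readily: over a local field $I(E)$ is a proper metric space so the isometric $G$-action on $V$ is proper; and when $k=\mathbb{R}$ with $G$ non-compact, the remark after Definition~\ref{aag-aaa} identifies the compact $L$ with $\mathbf{L}(\mathbb{R})$ for a strict $\mathbb{R}$-algebraic $\mathbf{L}\lneq\mathbf{G}$, which upgrades the second alternative to the first.
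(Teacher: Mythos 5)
Your overall architecture matches the paper's: amenability gives $\phi\colon Y\to\Prob(S(E))$, the kernel-dimension stratification of Proposition~\ref{ker} plus ergodicity yields the dichotomy, and in the branch $m\geq 1$ you correctly combine Theorem~\ref{mainthm}, Proposition~\ref{tame} and Proposition~\ref{prop:stab} to land either in a proper coset space or in a bounded stabilizer. Two smaller points there: you should first replace $\mathbf{G}$ by its adjoint group via Lemma~\ref{finite morphisms} --- without this, $\mathbf{G}\to\PGL(E)$ need not be a closed immersion and a nontrivial $\mathbf{H}_0$ could be finite and central, so your trichotomy ``essentially trivial or non-normal'' needs the adjoint reduction to be airtight; and the paper takes $\mathbf{H}$ to be the Zariski closure of $L$ rather than $N_{\mathbf{G}}(\mathbf{H}_0)$, though your choice works equally well since $L$ normalizes $\mathbf{H}_0$.

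The genuine gap is exactly the one you flag yourself: the branch where $\phi(y)$ is supported on $S_0(E)=I(E)$. Neither of the two routes you sketch is what is needed, and neither is easy to carry out in this generality: the orbits of $G$ on $\Prob(I(E))$ are not known to be locally closed (this is not an algebraic action, so Theorem~\ref{mainthm} does not apply), and the CAT(0) metric on the building inside $I(E)$ is not necessarily complete, so a barycenter argument is delicate over a general complete valued field. The paper's resolution is much more direct and you missed it: the second alternative of the theorem does not require a coset map or a fixed norm, only a complete separable metric $G$-space with bounded point stabilizers and an equivariant map into it. One simply takes $V=\Prob(I(E))$ itself with the Prokhorov metric; $G$ acts on it by isometries by Lemma~\ref{contprob}, and the stabilizer $L$ of any $\mu\in\Prob(I(E))$ is bounded because some ball $B\subset I(E)$ satisfies $\mu(B)>1/2$, whence $gB\cap B\neq\emptyset$ for every $g\in L$, so $L$ preserves the bounded set $LB$ and Lemma~\ref{boundedaction} applies. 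This choice of $V$ also absorbs your $\mathbf{H}_0$-trivial subcase, by sending $G/L$ into $I(E)$ via $gL\mapsto g[n_0]$ and then into $\Prob(I(E))$ by Dirac masses. Without this step your argument does not close.
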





\begin{proof}
We first note that the isogeny ${\bf G}\to \overline{\bf G}$, where $\overline{\bf G}$ is the adjoint group associated to ${\bf G}$, is a finite morphism.
Thus, by Lemma~\ref{finite morphisms} we may assume that ${\bf G}$ is an adjoint group. 
We do so.
By \cite[Proposition~1.10]{borel} we can find a $k$-closed immersion from ${\bf G}$ into some $\GL_n$. 
By the fact that ${\bf G}$ is simple, we may assume that this representation is irreducible.
By the fact that ${\bf G}$ is adjoint, the associated morphism ${\bf G}\to \PGL_n$ is a closed immersion as well.
We will denote for convenience $E=k^n$.
Via this representation, $G$  acts continuously and faithfully on the metric space of homothety classes of norms, $I(E)$,
and on the compact space of homothety classes of seminorms, $S(E)$, introduced in \S\ref{norms}.

By the amenability of the action of $R$ on $Y$ there exists a $f$-map, that is a $f$-equivariant map, $\phi\colon Y\to\Prob(S(E))$, which we now fix.
By Proposition~\ref{ker}, there is a measurable partition $S(E)=\cup_{d=0}^{n-1} S_d(E)$, 
given by the dimension of the kernels of the seminorms.
For a given $d$, the function $Y\to [0,1]$ given by $y\mapsto \phi(y)(S_d(E))$ is $R$-invariant, hence almost everywhere equal to some constant, by ergodicity.
We denote this constant by $\alpha_d$.
Note that $\sum_{d=0}^{n-1} \alpha_d =1$. We choose $d$ such that $\alpha_d>0$ and define 
\[ \psi:Y\to \Prob(S_d(E)), \quad \psi(y)=\frac{1}{\alpha_d}\phi(y)|_{S_d(E)}. \]
Note that $\psi$ is a $f$-map.
We will consider two cases: either $d>0$ or $d=0$. This is a first bifurcation leading to the two alternatives in the statement of the theorem.

We first consider the case $d>0$.
We use the map $S_d(E)\to \Gr_d(E)$ discussed in Proposition~\ref{ker} to obtain the push forward map $\Prob(S_d(E))\to \Prob(\Gr_d(E))$.
By post-composition, we obtain a $f$-map $\Psi:Y\to \Prob(\Gr_d(E))$.
By Theorem~\ref{mainthm} the action of $G$ on $\Prob(\Gr_d(E))$ is almost algebraic (as the action of $G$ on $\Gr_d(E)$ is almost algebraic by Proposition~\ref{polishing}),
and the quotient topology on $G\backslash\Prob(\Gr_d(E))$ is $T_0$.
We claim that there exists $\mu \in \Prob(\Gr_d(E))$ such that the set $\Psi^{-1}(G\mu)$ has full measure in $Y$.
The standard argument is similar to the prof of Proposition~\ref{tame}:
for a countable basis $B_i$ for the topology of $G\backslash \Prob(\Gr_d(E))$,
the set 
\[ \bigcap \{B_i ~|~\Psi^{-1}(B_i) \mbox{ is full in }Y\}\cap \bigcap\{B_i^c ~|~\Psi^{-1}(B_i)\mbox{ is null in }Y\} \]
is clearly a singleton, whose preimage is of full measure in $Y$. 
Let $\mu$ be a preimage of this singleton in $\Prob(\Gr_d(E))$.

By the fact that $G$ acts almost algebraically on $\Prob(\Gr_d(E))$, we may identify $G\mu$ with a coset space $G/L$, for some almost algebraic subgroup $L=\Stab_G(\mu)<G$,
and view $\Psi$ as an $f$-map from $Y$ to $G/L$.
By Proposition~\ref{prop:stab}, there exists a
$k$-subgroup ${\bf H}_0<{\bf G}$ which is normalized by $L$
such that $L$ has a precompact image in the Polish group $(N_{\bf G}({\bf H}_0)/{\bf H}_0)(k)$
and such that $\mu$ is supported on the subvariety of ${\bf H}_0$ fixed points in $\Gr_d(E)$.
Note that by the irreducibility of the representation ${\bf G} \to \GL_n$ we have no ${\bf G}$-fixed points in $\Gr_d(E)$,
thus ${\bf H}_0 \lneq {\bf G}$.

Assume moreover that ${\bf H}_0\neq \{e\}$ and let ${\bf H}$ be the Zariski-closure of $L$.
By \cite[Theorem~AG14.4]{borel}, ${\bf H}$ is a $k$-subgroup of ${\bf G}$.
By the simplicity of ${\bf G}$, ${\bf H}\lneq {\bf G}$, as ${\bf H}$ normalizes ${\bf H}_0$.
Post-composing the $f$-map $\Psi$ with the map $G/L \to {\bf G}/{\bf H}(k)$ we obtain
a $k$-algebraic subgroup ${\bf H}\lneq {\bf G}$
and an $f$-equivariant measurable map $\phi:Y\to {\bf G}/{\bf H}(k)$, as desired.

Assume now ${\bf H}_0=\{e\}$. 
In that case $L$ is compact, and in particular bounded in $G$.
It follows by Theorem~\ref{orthogonal} that $L$ fixes a norm on $E$.
Thus we may map the coset space $G/L$ $G$-equivariantly into $S_0(E)=I(E)$.
Using the $\delta$-measure embedding $I(E)\hookrightarrow\Prob(I(E))$
and obtain a new $f$-map $Y\to \Prob(I(E))$. We are then reduced to the case $d=0$, to be discussed below.

We consider now the case $d=0$, that is we assume having an $f$-map $Y\to \Prob(I(E))$.
We set $V=\Prob(I(E))$.
By Lemma~\ref{boundedaction}, $G$ acts isometrically and with bounded stabilizers on $I(E)$.
By Lemma~\ref{contprob}, $G$ acts isometrically on $V$.
Let us check that stabilizers are bounded. 
Fix $\mu\in \Prob(I(E))$, and let $L$ be its stabilizer in $G$. 
Since $I(E)$ is Polish there is a ball $B$ of $I(E)$ such that $\mu(B)>1/2$.
It follows that for any $g\in L$, $gB$ intersects $B$.
Thus the set $LB$ is bounded in $I(E)$, and by Lemma \ref{boundedaction} its stabilizer is bounded in $G$. 
It follows that $L$ is bounded.
Thus we have found an $f$-map from $Y$ to a complete and separable metric space $V$ on which $G$ acts by isometries
with bounded stabilizers as desired.
\end{proof}

\section*{Acknowledgement}
U.B was supported in part by the ERC grant 306706. B.D. is supported in part by Lorraine Region and Lorraine University. B.D \& J.L. are supported in part by ANR grant ANR-14-CE25-0004 GAMME.

\bibliographystyle{smfalpha}
\bibliography{biblio}

\end{document}